\let\oldtocsection=\tocsection
\let\oldtocsubsection=\tocsubsection
\let\oldtocsubsubsection=\tocsubsubsection
\renewcommand{\tocsection}[2]{\hspace{0em}\oldtocsection{#1}{#2}}
\renewcommand{\tocsubsection}[2]{\hspace{1em}\oldtocsubsection{#1}{#2}}
\renewcommand{\tocsubsubsection}[2]{\hspace{2em}\oldtocsubsubsection{#1}{#2}}
\newcounter{prcounter}
\newcommand{\lf}{\left}
\newcommand{\ri}{\right}
\newcommand{\f}{\frac} 
\newcommand{\into}{\hookrightarrow}
\newcommand{\onto}{\twoheadrightarrow}
\newcommand{\wh}{\widehat}
\DeclareMathOperator{\Supp}{Supp}
\DeclareMathOperator{\tr}{tr}
\newcommand{\m}[1]{\mathbf{#1}}
\newcommand{\mf}[1]{\mathfrak{#1}}
\newcommand{\mc}[1]{\mathcal{#1}}
\newcommand{\C}{\mathbb C}
\newcommand{\R}{\mathbb R}
\newcommand{\Z}{\mathbb Z}
\newcommand{\SL}{\mathrm{SL}}
\newcommand{\Sp}{\mathrm{Sp}}
\newcommand{\GL}{\mathrm{GL}}
\newcommand{\St}{\mathrm{St}}
\newcommand{\eps}{\epsilon}
\newcommand{\om}{\omega}
\newcommand{\bs}{\backslash}
\newcommand{\1}{\m 1}
\DeclareMathOperator{\Speh}{\mathrm{Sp}}
\newcommand{\dom}{\backslash}
\newcommand{\BC}{\mathbb C}
\newcommand{\fg}{\mathfrak{g}}
\renewcommand{\th}{^{\text{th}}}
\newtheorem{thm}{Theorem}[subsection]
\newtheorem{prop}[thm]{Proposition}
\newtheorem{cor}[thm]{Corollary}
\newtheorem{lem}[thm]{Lemma}
\newtheorem{conj}[thm]{Conjecture}
\newtheorem{ex}[thm]{Example}
\theoremstyle{remark}
\newtheorem*{note}{Note}
\newtheorem{rmk}[thm]{Remark}
\theoremstyle{definition}
\newtheorem{dfn}[thm]{Definition}
\newtheorem{conv}[thm]{Convention}
\numberwithin{equation}{subsection}
\title[Uniform Bounds and Uncertainty]{Uniform Bounds and Uncertainty for Asymptotics of Representations of $p$-adic $\GL_N$ %Two bounds on growth of fixed vectors for representations of $p$-adic $GL_N$ %Growth of Spaces of Fixed Vectors and GK-dimension for $p$-adic Representations of $\GL_N$
}
\author{Rahul Dalal}
\author{Mathilde Gerbelli-Gauthier}
\author{Simon Marshall}
\date{\today}
\begin{document}

\begin{abstract}
We prove two results on the growth of dimensions of fixed vectors of representations $\pi$ of $p$-adic $\GL_N$ under principal congruence subgroups: First, a uniform bound on the growth of fixed vectors in terms of the GK-dimension $\pi$, which we extend to a uniform bound on the Harish-Chandra--Howe coefficients. Second, for $\pi$ unitary, a quantitative relationship between the GK-dimension of $\pi$ and the rate of decay of its matrix coefficients. These results are independent of one another and proved in the framework of the Langlands and Zelevinsky classifications. 
\end{abstract}

\maketitle

%{\color{blue}
%To Do/Improvements
%\begin{itemize}
%\item Is removing epsilon doable??? (if possible) Probably not
%\item Prove 1.0.5 from Simon's argument (Steinberg, Speh of Steinbrg).   
%\item Decouple the orbits of the same dimension. 
%\item Can fixed vector bound be improved to bound on arbitrary test functions in ``scaling family''? 
%\begin{itemize}
%    \item Idea: indicators of compact opens bounded by traces against principal congruence inside
%    \item indicators of compact open cosets bounded since rep is unitary so group elements act by operator norm 1
%\end{itemize}
%\item Harmonize compact-open subgroup notation.
%\item upper vs lower-case $n$. MGG: I'm back to thinking it'll just be simpler to change the notation for the Levis in section 6 -- thoughts? 
%\item Relative fixed vector bound
%\item figure what to do w/ section 4.1 on homogeneity
%\item do some computations with unitary groups if time? Alberto says to check unitary ladder reps as defined by Atobe
%\item expand the list of references/global motivation
%\item intro importance of invariants
%\item intro notation reference if desired? Brackets at least. 
%\item PROOFREAD
%\item better title
%\item Make the statements more general (as much as possible).
%\item Read section 5.2
%\end{itemize}
%}

%\blue{This is a preliminary draft, missing in particular many references in the introduction. Please do not circulate.}

\tableofcontents

\section{Introduction}

%\subsection{Context}
Let $F$ be a $p$-adic field with ring of integers $\mc O$, residue field $k$ of size~$q$, and uniformizer $\varpi$. We study smooth, irreducible representations of ~$\GL_N(F)$. Our focus is on analytic invariants which arise in global applications, especially in the study of densities of non-tempered automorphic representations \cite{SX91, HK93, Mar14, MS19, Blo19, DGG22, EP22, AB24, DEP}%\red{this list should be more comprehensive and not just the papers we know, Simon help!!!!}
. The invariants are:
%In analytic applications, many different invariants of these representations become relevant. For example:
\begin{itemize}
    \item[(i)] the rate of growth of dimensions spaces of fixed vectors of $\pi$ under principal congruence subgroups of increasing level,%\cite{Mar14, MS19, DGG22}
    \item[(ii)] the singularity of the character $\Theta_\pi$ of $\pi$ near the identity, encoded via the Harish-Chandra--Howe coefficients and the subsequent notions of wavefront set and GK-dimension,
    \item[(iii)] the rates of decay of matrix coefficients of $\pi$. %\cite{SX91, Blo19, AB24, DEP}
\end{itemize}
%This paper broadly aims to control these invariants uniformly in terms of classifying data and each other. Conceptually, we are guided by a principle of duality that arises both from the nature of the invariants and in the architecture of the corresponding classifications. The uniformity of our bounds comes from the key input of a uniform bound on fixed vector growth. 

%In this paper, we give uniform bounds on the first invariant and establish new relations between all three invariants, the most novel of which is between (ii) and (iii). As a result, we give uniform upper bounds on all three invariants. 

The goal of this paper is twofold: to give uniform bounds on these invariants, with an initial focus on the rate of growth of fixed vectors (Theorem \ref{thm:mainfixedvector}), and to establish quantitative relationships between them, the most novel of which relates the GK-dimension and the rate of decay of matrix coefficients (Theorem \ref{thm:mainuncertainty}). %As a result of these relationships, we give uniform bounds 

\subsection{Analytic invariants}
We now describe the invariants in more detail.% in the context of $\GL_N$.

\subsubsection{Fixed-Vector Growth}

For a positive integer $\ell$, let $K_\ell$ be the $\ell^{\text{th}}$ principal congruence subgroup of $\GL_N$:
\[
K_\ell := K_\ell^N := \{g \in \GL_N(F) \,:\, g \equiv \m I_N \pmod{\varpi^\ell} \},
\]
where $N$ is omitted when it is clear from context. Our first invariant is the function $\ell \mapsto \dim(\pi^{K_\ell})$, which is known to grow polynomially in $q^{\ell}$, see \eqref{eq:GKexpansion}. In the global context, this invariant is key to relating counts of automorphic forms (important in analytic applications and more easily accessible via the trace formula) to counts of automorphic representations (more natural from an arithmetic point of view).

We call the smallest integer $\ell(\pi)$ such that $\pi^{K_{\ell(\pi)}} \neq 0$ the \emph{level} of $\pi$. 

%To study this best and relate it to the other invariants, we make a definition:
%\begin{dfn}
%Let $\pi$ be a smooth, irreducible representation of $GL_n$. The level of $\pi$ is the integer
%\[
%\ell(\pi) = \min\{\ell \,:\, \pi^{K_\ell} \neq 0\}.
%\]
%\end{dfn} 

\subsubsection{Local Character Expansion}\label{sec HCH intro}
Let $\mf g$ be the Lie algebra of $\GL_N$, and let $\mc N := \mc N(N)$ be the set of nilpotent coadjoint orbits in the dual Lie algebra $\mf  g^*$.  (Note that henceforth we shall identify $\mf g^*$ with $\mf g$ using a symmetric nondegenerate invariant bilinear form.)  To $O \in \mc N$ corresponds a $\GL_N(F)$-invariant orbital integral measure $\mu_O \in C^\infty_c(\mf g)^*$, which, together with its Fourier transform $\wh{\mu}_O$, can be be interpreted as a distribution on $\mf g$. 

Let $\pi$ be a smooth, admissible representation of $\GL_N(F)$ with character $\Theta_\pi$. The following is a special case of a result known for general reductive $p$-adic groups:

\begin{thm}[Local Character Expansion, \cite{HC99admissible,Howe74Fourier}]\label{thm:HCH}
%Let $\pi$ be a smooth admissible representation of $\GL_N(F)$. 
For a suitable normalization of the $\mu_O$, there are constants $c_O(\pi)$ for $O \in \mc N$ such that for small enough $X \in \mf g$, %and suitable normalization of the $\mu_{O}$, 
the trace character of $\pi$ admits the expansion
\[
\Theta_\pi(\exp X) = \sum_{O \in \mc N} c_O(\pi) \wh \mu_O(X).
\]
\end{thm}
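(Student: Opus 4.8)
Since this is the Harish-Chandra--Howe local character expansion, the plan is to follow Harish-Chandra's argument; I outline its architecture, specialized to $\GL_N$ where Howe's orbit-theoretic simplifications are available. The key move is to transfer the problem from $\GL_N(F)$ near $\m I_N$ to the Lie algebra $\mf g$ near $0$, where one has the full apparatus of invariant distributions, Fourier transform, and orbital integrals. Concretely, I would fix a $\GL_N(F)$-invariant open neighborhood $\omega$ of $0$ in $\mf g$ small enough that $\exp\colon\omega\to\GL_N(F)$ converges, is a homeomorphism onto a neighborhood of $\m I_N$, and intertwines $\Ad$ with conjugation; when the residue characteristic is small one replaces $\exp$ by a mock-exponential (e.g.\ a Cayley transform), which costs nothing since the statement only asserts the expansion for $X$ in a small neighborhood. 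Pulling $\Theta_\pi$ back along this map yields an $\Ad(\GL_N(F))$-invariant distribution $\theta$ on $\omega$, and the claim becomes: after shrinking $\omega$, $\theta$ agrees on $\omega$ with $\sum_{O\in\mc N}c_O(\pi)\,\wh\mu_O$ for suitable scalars $c_O(\pi)$.

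The first ingredient I would invoke is Harish-Chandra's regularity theorem: $\Theta_\pi$ is represented by a locally integrable function that is locally constant on the regular semisimple locus, so $\theta$ is represented near $0$ by such a function, and it belongs to the class of \emph{admissible} invariant distributions on $\mf g$, which is preserved by the Fourier transform (via the fixed invariant form $\mf g\cong\mf g^*$). In particular $\wh\theta$ is a well-defined invariant distribution on a neighborhood of $0$, again represented near $0$ by a locally integrable function that is locally constant on the regular set.

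The crux of the proof is to show that the germ of $\wh\theta$ at $0$ lies in the span of the germs of the nilpotent orbital integrals $\{\mu_O:O\in\mc N\}$. Here I would decompose the germ of $\wh\theta$ at $0$ into components transforming by characters under the dilation action $X\mapsto t\cdot X$ of $F^\times$ — legitimate because each $\mu_O$ is homogeneous of an explicit degree determined by $\dim O$ — and then use Harish-Chandra's control on the support of the leading term of an admissible invariant distribution near $0$, equivalently the fact that the wavefront set of $\Theta_\pi$ at $\m I_N$ is contained in the nilpotent cone, to conclude that each homogeneous component is an invariant distribution supported on $\overline{\mc N}$. Howe's structure theorem then identifies the space of invariant distributions supported on $\overline{\mc N}$ with the span of $\{\mu_O:O\in\mc N\}$ and gives that these are linearly independent as germs at $0$; hence $\wh\theta\equiv\sum_O c_O(\pi)\,\mu_O$ near $0$ with uniquely determined $c_O(\pi)$. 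Applying the Fourier transform once more and using Fourier inversion (together with $\wh\mu_{-O}=\wh\mu_O$, valid since $-O$ and $O$ are $\GL_N$-conjugate) converts this into $\theta\equiv\sum_O c_O(\pi)\,\wh\mu_O$ on a neighborhood of $0$, which is precisely the asserted expansion of $\Theta_\pi(\exp X)$ for small $X$.

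The main obstacle is the step just described: pinning down the supports of the dilation-homogeneous components of $\wh\theta$ and knowing that the ambient germ space is exhausted by the $\mu_O$. This is where the admissibility of $\pi$ genuinely enters — through Harish-Chandra's regularity theorem and his descent along semisimple elements, or through Howe's Kirillov-orbit analysis in the $\GL_N$ case — and where Howe's theorem on Fourier transforms of orbital integrals (finiteness, spanning, and linear independence of the germs of the $\wh\mu_O$) is indispensable. Granting those two classical inputs, the remainder is bookkeeping with the Fourier transform, homogeneity, and the (mock-)exponential map.
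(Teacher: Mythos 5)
The paper does not prove Theorem \ref{thm:HCH}: it is quoted as a classical result with citations to \cite{HC99admissible,Howe74Fourier}, so there is no in-paper argument to compare against. Your sketch is a faithful outline of the standard Harish-Chandra--Howe proof from exactly those sources (transfer to $\mf g$ via a (mock-)exponential, regularity/admissibility of the invariant distribution, support of the Fourier-transformed germ on the nilpotent cone, Howe's finiteness and linear independence of the germs of the $\mu_O$, and Fourier inversion using that $-O$ and $O$ are conjugate in $\GL_N$), so it is the same approach the paper implicitly relies on.
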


%The coefficients $c_O(\pi)$ are notoriously difficult to compute, even for $GL_N$. 
The set of maximal orbits (under the closure ordering) satisfying that $c_O(\pi)\neq 0$
%$\{O \in \mc N(N): c_O(\pi) \neq 0\}$ 
is known as the \emph{wavefront set} of $\pi$.  For $O$ in the wavefront set, $c_O(\pi)$ is equal to the dimension of a corresponding space of degenerate Whittaker models following \cite[II.2 Proposition]{MW87} in residual characteristic $p \neq 2$ and \cite{Var14} for $p=2$. The wavefront sets of local representations are connected to the vanishing of the various generalized Fourier coefficients of global automorphic representations (see \cite{JL24wavefront} for an overview).%The wavefront sets of local representations determine, for example, the vanishing of the various generalized Fourier coefficients of global automorphic representations (see \cite{JL24wavefront} for an overview).% is determined by the wavefront sets of its local components. 

For $\GL_N$, the wavefront set is determined by \cite{MW87}, although for general reductive groups its computation is still an open question and an active subject  of research, see e.g. \cite{tsai2023wave, ciubotaru2025wavefront}.  
%but in general the wavefront set is a subtle but well-studied invariant. %see e.g. \blue{[Jiang et al, Ciubotaru Okada LMB, Waldsuprger 21 etc etc]}. 
Even more subtle, and not fully understood even for $\GL_N$, are the coefficients associated to non-maximal orbits, see e.g. \cite{Mur91GL3, Mur03local, HV24GLn}. These coefficients are fundamental objects in harmonic analysis as ``spectral-side'' analogues of Shalika germs for orbital integrals---e.g, \cite{Mur96}. 

The character expansion determines the asymptotic rate of growth of fixed vectors via the following notion:

\begin{dfn}
The Gelfand-Kirillov dimension, or GK-dimension of $\pi$ is
\[
d_{GK}(\pi) = \f12 \max \{ \dim O : c_O(\pi) \neq 0\}. 
\]
Let $D := D_N$ be the set of possible GK-dimensions $\{1/2\dim O : O \in \mc N(N)\}$.
\end{dfn}

It is well-known\footnote{See the proof of Lemma \ref{lem:homogenity} for a reconstruction of the argument.} that 
\begin{equation} \label{eq:GKexpansion}
\dim(\pi^{K_{\ell}}) \asymp q^{d_{GK}(\pi)\ell},
\end{equation}
where we use $\asymp$ to denote that both functions are bounded by a multiple of the other for $\ell$ large enough. The implied constants a priori depend on $\pi$. 

% and for each $d \in D_N$, let $\mc N_d = \{O \in \mc N : 1/2\dim O = d\}$. \blue{MGG: got rid of this notion since I think we don't use it anymore}.

\subsubsection{Matrix Coefficient Decay}
Finally, we let $G$ be a reductive group over a real or $p$-adic field, we consider $\pi$ an admissible representation of $G$ on a Hilbert space, and introduce the $L^p$-integrability exponent $p(\pi)$ of $\pi$.  For any $v \in \pi$ and $w \in \pi^\vee$, we can define the matrix coefficient on $G$ given by
\[
m(v,w) := \langle gv, w \rangle.
\]
Next, if $\pi$ has central character $\om$, then for any matrix coefficient $m$ of $\pi$, $m\om^{-1}$ is defined on the central quotient $G/Z_G$. Therefore, the integrals
\[
\|m\|_{q, \om} = \lf(\int_{G/Z_G} |m\om^{-1}|^q \ri)^{1/q}
\]
are well-defined (but possibly infinite). We say that $m \in L^q_\om(G)$ if $\|m\|_{q, \om} < \infty$.

\begin{dfn}\label{def:ppi}
Given an admissible Hilbert space representation $\pi$ of a real or $p$-adic reductive group $G$ with central character $\om$, let
\[
p(\pi) := \inf\{q \geq 2 \,:\, \text{ the $K$-finite matrix coefficients of }\pi \text{ are in } L^q_\om(G)\}. 
\]
\end{dfn}

Representations such that $p(\pi) = 2$ are \emph{tempered}. The na\"ive Ramanujan conjecture would give that $p(\pi) = 2$ for any local component of a cuspidal global automorphic representation. This is still expected to hold for global $\GL_N$, but \cite{HPS79} famously showed that it does not in general, and counterexamples also exist for groups such that $G(F) = {\rm GL}_N(F)$, e.g. \cite{Ro90}. Therefore it is crucial in analytic applications to understand how frequently these $p(\pi)$ can be large---see the Sarnak-Xue conjecture introduced in \cite{SX91}. 

We note that for $\GL_N$, tempered representations are generic (i.e. have maximal wavefront set), but that the converse does not hold. 

\subsection{Results}
We work with the explicit combinatorial classifications of smooth irreducible representations of $\GL_N(F)$ in terms of supercuspidals, which we review in Section \ref{sec:background}. In Section \ref{sec:fixedvector}, we prove a uniform upper bound on the growth of dimensions of $K_\ell$-fixed vectors in terms of the GK-dimension, making the asymptotic \eqref{eq:GKexpansion} uniform:
\begin{thm}[Corollary \ref{cor:bodyfixedvector}]\label{thm:mainfixedvector}
For all $\eps > 0$ there is a uniform constant $C_\eps := C_{\eps, N, F}$ such that for any smooth irreducible representation $\pi$ of $\GL_N(F)$, we have
\[
\dim(\pi^{K_\ell}) \leq C_\eps q^{\ell (d_{GK}(\pi) + \eps)}.
\]
\end{thm}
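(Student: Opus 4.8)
The plan is to induct on $N$ using the Zelevinsky classification of irreducible smooth representations of $\GL_N(F)$, which is the right classification here because its building blocks are the \emph{least} generic representations and so carry the fewest $K_\ell$-invariants. Write $\pi = Z(\mf m)$ for a multisegment $\mf m = \{\Delta_1,\dots,\Delta_r\}$, with $\Delta_i$ a segment of some length $a_i$ built on a supercuspidal of $\GL_{d_i}$, and let $Z(\Delta_i)$ be the associated ``generalized trivial'' representation of $\GL_{m_i}$, $m_i = a_i d_i$. Then $\pi$ is a Jordan--H\"older constituent, hence a subquotient, of
\[
\zeta \;:=\; Z(\Delta_1)\times\cdots\times Z(\Delta_r) \;=\; \Ind_P^{\GL_N}\!\big(Z(\Delta_1)\otimes\cdots\otimes Z(\Delta_r)\big),
\]
where $P$ has Levi $\prod_i\GL_{m_i}$; since the functor $V\mapsto V^{K_\ell}$ is exact on smooth $\GL_N(F)$-representations over $\C$, we get $\dim(\pi^{K_\ell}) \le \dim(\zeta^{K_\ell})$. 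An Iwasawa decomposition $\GL_N(F) = P\cdot\GL_N(\mc O)$, the Mackey formula, and the normality of $K_\ell$ in $\GL_N(\mc O)$ yield the exact identity
\[
\dim(\zeta^{K_\ell}) \;=\; \big|\GL_N(\mc O/\varpi^\ell)/P(\mc O/\varpi^\ell)\big|\cdot\prod_{i=1}^r\dim\!\big(Z(\Delta_i)^{K_\ell^{m_i}}\big),
\]
in which the flag count is $\asymp q^{\ell\dim N_P}$ ($N_P$ the unipotent radical of $P$), with implied constants depending only on $N$. So the theorem reduces to (a) a uniform bound $\dim(Z(\Delta)^{K_\ell}) \le C_\eps\,q^{\ell(d_{GK}(Z(\Delta))+\eps)}$ for a single segment $\Delta$ on any $\GL_d$, $d\le N$, together with (b) the combinatorial identity $\dim N_P + \sum_i d_{GK}(Z(\Delta_i)) = d_{GK}(\pi)$.

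For (b) I would invoke M\oe glin--Waldspurger: the wavefront set of $Z(\mf m)$ is the orbit $O_\mu$ whose partition $\mu$ is built, on each supercuspidal line, by transposing the partition of segment lengths (and rescaling by the residue degree $d_i$); combined with $\dim O_\mu = N^2 - \sum_j(\mu^t_j)^2$ this makes (b) an elementary check, and gives $d_{GK}(Z(\Delta)) = a^2\binom{d}{2}$. The same description also shows why bounding $\pi^{K_\ell}$ by $\zeta^{K_\ell}$ costs nothing in the exponent: the remaining constituents $Z(\mf m')$ of $\zeta$ are indexed by multisegments $\mf m'$ below $\mf m$ in the Zelevinsky order, obtained by elementary operations that only \emph{spread out} the length partition, hence move $\mu$ down in dominance and shrink $\dim O_\mu$; thus $d_{GK}$ is maximized among the constituents of $\zeta$ by $\mf m$ itself, i.e.\ $d_{GK}(\zeta)=d_{GK}(\pi)$.

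The crux is (a), where the argument must stop being formal: a single segment $Z(\Delta)$ of length $a$ is the \emph{smallest}-GK-dimension constituent of every standard module containing it, so the naive bound by $\rho\times\rho\nu\times\cdots\times\rho\nu^{a-1}$ (with $\nu = |\det|$) overshoots --- for $d=1$ it gives $q^{\ell\binom a2}$ where the truth is $1$, as $Z(\Delta)$ is then a character. I would handle $d=1$ directly (characters, dimension $1$) and, for $d\ge 2$, induct on $a$ via the length-two filtration
\[
0 \longrightarrow Z(\Delta) \longrightarrow Z(\Delta^-)\times\rho\nu^{a-1} \longrightarrow Z\big(\{\Delta^-,[\rho\nu^{a-1}]\}\big) \longrightarrow 0
\]
($\Delta^-$ the segment $\Delta$ with its top entry removed), so that $\dim(Z(\Delta)^{K_\ell})$ equals the difference of $\dim\big((Z(\Delta^-)\times\rho\nu^{a-1})^{K_\ell}\big)$ --- controlled by the inductive hypothesis through the Iwasawa identity above --- and $\dim\big(Z(\{\Delta^-,[\rho\nu^{a-1}]\})^{K_\ell}\big)$; the real content is a sharp \emph{lower} bound on the latter, showing this quotient absorbs all but $O\big(q^{\ell(d_{GK}(Z(\Delta))+\eps)}\big)$ of the fixed vectors. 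At the bottom ($a=1$) sits a supercuspidal $\rho$ of $\GL_d$, for which one needs $\dim(\rho^{K_\ell}) \le C_\eps\,q^{\ell(\binom d2+\eps)}$ \emph{uniformly in the depth of $\rho$}: bound $\dim(\rho^{K_\ell})$ using a Bushnell--Kutzko type and Mackey theory, and control the transient range $\ell(\rho)\le\ell\lesssim\mathrm{depth}(\rho)$ until the asymptotic \eqref{eq:GKexpansion} --- reconstructed from Theorem \ref{thm:HCH} and the homogeneity computation of Lemma \ref{lem:homogenity} --- takes over. I expect the main obstacle, and the reason the bound carries an $\eps$ instead of a clean constant, to be exactly here: the polynomial-in-$\ell$ corrections accumulated through the recursion on $a$ and the depth-dependent transient for supercuspidals are harmless once absorbed into $q^{\eps\ell}$, but an exponent-sharp, depth-uniform bound for $\dim(Z(\Delta)^{K_\ell})$ is the genuine work.
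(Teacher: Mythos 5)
Your outer reduction is exactly the paper's: realize $\pi$ as a subquotient of $Z(\Delta_1)\times\cdots\times Z(\Delta_r)$, use exactness of $K_\ell$-invariants, count the flag variety contribution of the parabolic induction (Lemma \ref{lem:inductionbound}), and check via Corollary \ref{cor:gkdimform} that $\dim N_P+\sum_i d_{GK}(Z(\Delta_i))=d_{GK}(\pi)$, with $d_{GK}(Z(\Delta))=a^2\binom{d}{2}$. All of that is correct. The genuine gap is in your step (a), which you rightly call the crux but do not close. You propose to bound $\dim(Z(\Delta)^{K_\ell})$ as the \emph{difference} of $\dim\bigl((Z(\Delta^-)\times\rho\nu^{a-1})^{K_\ell}\bigr)$ and $\dim\bigl(Z(\{\Delta^-,[\rho\nu^{a-1}]\})^{K_\ell}\bigr)$. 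But both of those quantities grow like $q^{\ell D}$ with $D=d_{GK}\bigl(Z(\{\Delta^-,[\rho\nu^{a-1}]\})\bigr)$, which is strictly larger than $d_{GK}(Z(\Delta))$; so your strategy requires the two leading terms to cancel \emph{exactly}, i.e. a lower bound on the fixed vectors of the quotient matching the upper bound on the induced representation not just in exponent but in leading constant, uniformly in $\rho$ and its depth. Nothing in the proposal supplies such a two-sided uniform asymptotic, and it is at least as hard as the theorem itself (the known asymptotic \eqref{eq:GKexpansion} has $\pi$-dependent constants, which is the entire difficulty). The paper avoids any cancellation: Theorem \ref{thm:pre speh bound} bounds $\dim(\Speh(k,\rho)^{K_\ell})$ directly from above inside its degenerate $[N^{(k)}]$-Whittaker model, using injectivity of restriction to $\GL_{Nk-1}$ (Lemma \ref{lem:restriction}) and the Lapid--M\'inguez restriction property \cite[(3a)]{LM20} to peel off one $\GL_N$-block at a time, reducing to $\Speh(k-1,\rho)$ and $\rho$ with only upper bounds.

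The base case has a second, related gap. You propose to get the depth-uniform supercuspidal bound from a Bushnell--Kutzko type together with the asymptotic \eqref{eq:GKexpansion} ``taking over'' after a transient range; but the implied constants in \eqref{eq:GKexpansion} depend on $\pi$, so this argument is circular---uniformity in the supercuspidal is precisely what must be proved. The paper quotes Lemma \ref{lem:supercuspidal bound} (= \cite[Lem.\ A.1]{MS19}), whose proof bounds the support of $K_\ell$-invariant Whittaker functions uniformly via \cite{Lap19}. If you take that lemma as a black box the base case is fine, but then you should also adopt a direct upper-bound argument for the inductive step on $a$, since the difference strategy cannot be salvaged without uniform two-sided asymptotics.
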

%We prove this bound in Section \ref{ssec:fixedvectorbound}. 
The proof is given in terms of the ``Zelevinsky classification'', for which the intermediate building blocks are so-called Speh representations. For supercuspidal representations, a uniform upper bound was established in \cite[Lem A.1]{MS19} using the Whittaker model. The main contribution here is Theorem \ref{thm:pre speh bound} extending the supercuspidal bound to Speh representations using the generalized Whittaker models of \cite{MW87}. 

In Section \ref{sec:HCH}, we use a homogeneity property of the distributions $\wh \mu_O$ appearing in the HCH-expansion of Theorem \ref{thm:HCH} to turn Theorem \ref{thm:mainfixedvector} into a uniform bound on the coefficients~$c_O(\pi)$ in terms of the GK-dimension and level: 

\begin{cor}[Corollary \ref{cor:maincoefficientsalt}]\label{cor:maincoefficientsaltintro}
Let $\eps > 0$. Then there is a uniform constant $C_\eps := C_{\eps, N, F}$ such that for all smooth irreducible representations $\pi$ of $\GL_N(F)$ and all $O \in \mc N(N)$, we have
\[
|c_O(\pi)| \leq C_\eps q^{\ell(\pi)(d_{GK}(\pi) - 1/2\dim O + \eps)}.
\]
\end{cor}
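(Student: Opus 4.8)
The plan is to recover the $c_O(\pi)$ from the function $\ell\mapsto\dim\pi^{K_\ell}$. Fix $\ell$ large enough that the Harish-Chandra--Howe expansion of Theorem~\ref{thm:HCH} is valid on the lattice $\mf k_\ell:=\varpi^\ell\,\gl_N(\mc O)$ and that $\exp$ carries $\mf k_\ell$ onto $K_\ell$; by the quantitative relationship between the depth of $\pi$ and the radius of convergence of its character expansion, this holds whenever $\ell\geq\ell(\pi)+c$ for a constant $c=c_{N,F}$. Integrating the expansion against the normalized idempotent $\phi_{\mf k_\ell}:=\vol(\mf k_\ell)^{-1}\mathbf 1_{\mf k_\ell}$ of $K_\ell$ gives, for such $\ell$,
\[
\dim\pi^{K_\ell}=\sum_{O\in\mc N(N)}c_O(\pi)\,\langle\wh\mu_O,\phi_{\mf k_\ell}\rangle .
\]
Because $\phi_{\mf k_{\ell+1}}$ is a dilate of $\phi_{\mf k_\ell}$ and $\wh\mu_O$ is homogeneous (this is the homogeneity recorded in Lemma~\ref{lem:homogenity}), one gets $\langle\wh\mu_O,\phi_{\mf k_\ell}\rangle=b_O\,q^{\ell\cdot\frac12\dim O}$ with $b_O:=\langle\wh\mu_O,\phi_{\mf k_0}\rangle$, and $b_O>0$ since $\wh{\phi}_{\mf k_0}$ is a positive multiple of the indicator of a lattice and the cone $O$ meets every lattice in a subset of positive orbital measure. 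Grouping the orbits by dimension, for all $\ell\geq\ell(\pi)+c$,
\[
\dim\pi^{K_\ell}=\sum_{d\in D_N}A_d(\pi)\,q^{\ell d},\qquad A_d(\pi):=\sum_{O:\ \frac12\dim O=d}b_O\,c_O(\pi).
\]

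\textbf{Inverting the system.} Evaluate this identity at the $|D_N|$ consecutive levels $\ell_0,\ell_0+1,\dots,\ell_0+|D_N|-1$ with $\ell_0:=\ell(\pi)+c$. The resulting linear system for $(A_d(\pi))_{d\in D_N}$ has coefficient matrix $\bigl(q^{(\ell_0+i)d}\bigr)=\bigl(q^{id}\bigr)\cdot\diag(q^{\ell_0 d})$, i.e.\ a Vandermonde matrix in the distinct nodes $\{q^d:d\in D_N\}$ times a diagonal matrix; its inverse has entries of the form $q^{-\ell_0 d}\cdot(\text{quantity depending only on }N,F)$. Solving for $A_d(\pi)$, bounding each $\dim\pi^{K_{\ell_0+i}}$ by $C_\eps q^{(\ell_0+i)(d_{GK}(\pi)+\eps)}$ via Theorem~\ref{thm:mainfixedvector}, and absorbing the bounded factors $q^{i(\cdots)}$ and $q^{c(\cdots)}$ (we may assume $d\leq d_{GK}(\pi)$, so the exponents $d_{GK}(\pi)-d+\eps$ lie in $[0,\binom N2+1]$; if $d>d_{GK}(\pi)$ then $A_d(\pi)=0$) gives
\[
|A_d(\pi)|\leq C_\eps\,q^{\ell(\pi)\,(d_{GK}(\pi)-d+\eps)}\qquad(d\in D_N).
\]
When $O\mapsto\dim O$ is injective on $\mc N(N)$ --- which is the case for $N\leq 5$ --- we have $A_{\frac12\dim O}(\pi)=b_O\,c_O(\pi)$ and the corollary follows.

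\textbf{Separating orbits of equal dimension.} For general $N$ there are incomparable orbits of the same dimension (e.g.\ the partitions $(3,1,1,1)$ and $(2,2,2)$ of $6$), and the classes $A_d(\pi)$ no longer pin down the individual $c_O(\pi)$: one more input is needed. The idea is to enlarge the family of test functions by replacing $\phi_{\mf k_\ell}$ by idempotents of $K_\ell$ twisted by characters of the finite abelian group $K_{\ell-1}/K_\ell$ (equivalently, by indicator functions of small cosets $\xi+\mf k_\ell$). Pairing $\Theta_\pi$ with such a function computes $\vol(K_\ell)$ times the trace of a $p$-torsion element acting on the finite-dimensional space $\pi^{K_\ell}$, so it is bounded in absolute value by $q^{N^2}\dim\pi^{K_\ell}$ and remains controlled by Theorem~\ref{thm:mainfixedvector}; on the spectral side, Fourier duality turns it, up to an explicit constant, into $\sum_O c_O(\pi)\,\mu_O(\xi+L)$ for $L$ a fixed lattice and $\xi$ running over its cosets. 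Running over a fixed finite collection of such twists --- and, where this still does not suffice, over the analogous idempotents attached to other vertices of the building, each of which contains $K_{\ell+1}$ so that Theorem~\ref{thm:mainfixedvector} continues to apply --- produces linear functionals of $(c_O(\pi))_{O}$ that distinguish all nilpotent orbits, with an inverse bounded uniformly in $\pi$ thanks once more to the homogeneity $\mu_O(\varpi L)=q^{-\frac12\dim O}\mu_O(L)$. Re-running the Vandermonde/Cramer estimate then isolates each $c_O(\pi)$ and yields the stated bound. (An alternative, following the strategy used for Theorem~\ref{thm:mainfixedvector}, would be to pass through the Zelevinsky classification: bound the $c_O$ of Speh representations directly, propagate through parabolic induction of nilpotent orbital integrals, and express $\Theta_\pi$ for irreducible $\pi$ as an integer combination of standard-module characters.)

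\textbf{The main obstacle.} The delicate part is this last step: producing, uniformly in $\pi$, enough "twisted fixed-vector" functionals to separate equidimensional orbits while simultaneously keeping the relevant dimensions bounded via Theorem~\ref{thm:mainfixedvector} \emph{with the correct exponent} and controlling the inverse of the resulting system independently of $\ell(\pi)$; a naive use of deeper (non-abelian) congruence data loses too much in the exponent. A secondary point needing care is the precise additive shift $c_{N,F}$ in the threshold $\ell_0=\ell(\pi)+c$, so that it is $\ell(\pi)$ --- not a multiple of it --- that appears in the final bound; this rests on the quantitative convergence of the Harish-Chandra--Howe expansion in terms of the level of $\pi$.
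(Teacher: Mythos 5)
Your first two steps are sound and closely parallel the paper's Lemma \ref{lem:homogenity} and the remark following it, but the proof is not complete: the separation of equidimensional nilpotent orbits, which you correctly flag as the main obstacle, is exactly where the argument must do real work, and your sketch of it does not close. Pairing $\Theta_\pi$ with indicators of cosets $\xi+\mf k_\ell$ and with idempotents at other vertices of the building does produce more functionals, but you give no argument that some \emph{fixed finite} collection of them separates all the distributions $\wh\mu_O$, nor that the resulting linear system has an inverse controlled uniformly in $\ell$ and $\pi$; as written, the general-$N$ case remains a plan rather than a proof. (The Vandermonde step for the grouped quantities $A_d(\pi)$ is fine, but since cancellation can occur inside $A_d(\pi)=\sum_{\frac12\dim O=d}b_Oc_O(\pi)$, no bound on the individual $c_O(\pi)$ follows from it once several orbits share a dimension.)

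The paper's resolution is simpler and bypasses the inversion entirely. The distributions $\wh\mu_O$ are linearly independent on $C^\infty_c(\mf g)$, so one can choose once and for all a test function $f_O$ with $\wh\mu_{O'}(f_O)=\1_{O=O'}$ (see \cite[\S6.4]{Kot05}). Any such $f_O$ is a finite linear combination of indicators of cosets $a_i+L_r$, so the trace of $\pi$ against its scaling family satisfies $|\tr_\pi(\varphi_O^{(\ell)})|\le C_f\dim\pi^{K_{r+\ell}}\le C_\eps q^{\ell(d_{GK}(\pi)+\eps)}$ uniformly in $\pi$ --- this is Proposition \ref{prop:scalingtrace}, and it is precisely your ``twisted fixed-vector functional'' mechanism, applied to a single function already dual to $\wh\mu_O$. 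Evaluating the character expansion at the single level $\ell=\ell(\pi)+1$ (valid by the main result of \cite{DB02}) and using the homogeneity of Lemma \ref{lem:homogenity} gives $\tr_\pi(\varphi_O^{(\ell(\pi)+1)})=c_O(\pi)\,q^{\frac12(\ell(\pi)+1)\dim O}$ exactly, and the stated bound on $c_O(\pi)$ drops out with no Vandermonde or Cramer estimate needed. Replacing your ad hoc family of twists by such a dual basis $\{f_O\}$ is the missing ingredient that would repair your route.
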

In view of the results of \cite[\S 6]{Mur03local}, we believe that these bounds are sharp.

These results are an instance of the rather general principle that the Zelevinsky classification is well-adapted to computing analytic properties of representations near the identity. On the other hand, the decay of matrix coefficients can be computed from the ``Langlands classification'', built from discrete Steinberg representations. In the latter half of the paper, we study of the relationship between the GK-dimension and decay of matrix coefficients by playing the two classifications against each other via Aubert-Zelevinsky duality. 

Starting in Section \ref{sec:Atype}, we focus on unitarizable and Arthur-type representations. These are most relevant in global applications, and constitute a special case of the ``ladder representations'' of \cite{LM14} for which the Aubert-Zelevinsky dual can be computed combinatorially.  We first review the classification of the unitary dual and use it to prove a variant of Theorem \ref{thm:mainfixedvector}, Corollary \ref{cor:genboundrel}, which is more suited to global analytic applications. 

Finally, in Section \ref{sec:uncertainty}, we give a method to compute the $p(\pi)$ of Definition \ref{def:ppi} using the notion of ``exponents'' from \cite{HC73, casselman1995introduction}. In the case of Arthur-type representations this reduces to an especially nice formula, Theorem \ref{cor:mcdecaysimple}, which we use to quantitatively relate the non-genericity and non-temperedness of $\pi$.

\begin{thm}[Theorem \ref{thm:atypemctogk}]\label{thm:mainuncertainty}
Define \[g(\pi)=1 - \frac{2d_{GK}(\pi)}{N(N-1)}.\] Then for all Arthur-type irreducible representations $\pi$ of $\GL_N(F)$:
\[
g(\pi) \leq 1 - \f2{p(\pi)} \leq g(\pi)^{1/2}. 
\]
For general unitarizable $\pi$, the same identity holds with the right-hand side replaced by $g(\pi)^{1/2} + 2/N$.
\end{thm}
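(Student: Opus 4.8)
The plan is to make the GK-dimension and the $L^p$-exponent both explicitly computable from the combinatorial data of an Arthur-type representation, and then compare the two resulting formulas. An Arthur-type representation of $\GL_N(F)$ is a product of Speh representations $\Speh(\rho, a)$ (the $\rho$ supercuspidal, $a \geq 1$), and the exponents $N = \sum_i a_i d_i$ where $d_i = \deg \rho_i$. First I would record, using the Zelevinsky/$\MW$-machinery already invoked for Theorem~\ref{thm:mainfixedvector}, the GK-dimension of such a representation: for a single Speh segment one has $\dim O$ realized as the nilpotent orbit with partition dual to the "rectangle" of the segment, and since GK-dimension is additive under parabolic induction of Speh blocks, $d_{GK}(\pi) = \sum_i d_{GK}(\Speh(\rho_i, a_i))$, with the per-block contribution a simple quadratic expression in $a_i$ and $d_i$. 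Concretely $2 d_{GK}(\Speh(\rho,a)) = d^2 \binom{a}{2} + a \cdot (\text{level-type term})$—the exact normalization I would pin down by checking the two extremes: the trivial representation $\Speh(\mathbf 1, N)$, where the orbit is the minimal one and $d_{GK}$ is essentially $N-1$, and the Steinberg (a single supercuspidal segment of length $1$, all blocks trivial) where $\pi$ is generic and $d_{GK} = \dim(\text{regular orbit})/2 = N(N-1)/2$. This is exactly why $g(\pi)$ is normalized by $N(N-1)$: $g(\pi) = 0$ iff $\pi$ is generic, $g(\pi) = 1$ iff $\pi$ is one-dimensional.

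Next I would compute $p(\pi)$ via exponents, following the recipe promised in Section~\ref{sec:uncertainty} (Theorem~\ref{cor:mcdecaysimple}): for an Arthur-type $\pi$ the $L^p$-exponent is governed by the most non-tempered Speh block, and for $\Speh(\rho, a)$ with $\rho$ unitary supercuspidal the matrix coefficients decay like those of a complementary-series-type object with parameter $(a-1)/2$ off the tempered line. The upshot should be a clean formula of the shape $1 - 2/p(\pi) = \max_i (a_i - 1)/(\text{something in } a_i, N)$, or more precisely $1 - 2/p(\Speh(\rho,a) \times \cdots) $ expressed through the leading exponent $(a_{\max}-1) d_{\max}/N$ type quantity. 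Having both $g(\pi)$ and $1 - 2/p(\pi)$ as explicit functions of the multiset $\{(a_i, d_i)\}$, the theorem becomes the elementary inequality
\[
\tfrac{2}{N(N-1)} \sum_i \bigl(\text{per-block } d_{GK}\bigr) \;\geq\; 1 - \tfrac2{p(\pi)} \quad\text{and}\quad 1 - \tfrac2{p(\pi)} \;\leq\; \Bigl(\tfrac{2}{N(N-1)}\sum_i (\cdots)\Bigr)^{1/2}.
\]
Wait—I should orient the first inequality correctly: it reads $g(\pi) \le 1 - 2/p(\pi)$, i.e. the left bound says non-temperedness is at least as large as non-genericity (suitably normalized), which makes sense since genericity forces temperedness but not conversely. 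So the first inequality is: (sum of per-block $d_{GK}$, appropriately complemented) $\le$ (leading-exponent quantity). For a single block this is comparing $d^2\binom a2 + (\text{lower order})$, complemented inside $N(N-1)/2 = \binom{ad}{2}$-ish, against $(a-1)/a$ or $(a-1)d/(ad)$: one checks $1 - \frac{d^2 a(a-1)}{ad(ad-1)} \le \frac{a-1}{a}\cdot(\cdots)$ by hand, and for multiple blocks the left side is a convex combination that only helps.

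The main obstacle, and the place I expect to spend the real effort, is the \emph{upper} bound $1 - 2/p(\pi) \le g(\pi)^{1/2}$: the left side is \emph{linear} in the leading Speh parameter $a_{\max}$ while $g(\pi)^{1/2}$ involves a square root of a quadratic, so the two scale compatibly only up to constants, and one must verify the inequality holds with constant exactly $1$ across the whole parameter range—including the extreme case of a single long Speh block with $d = 1$ (nearly the trivial representation), where both sides approach $1$ and the inequality is tight, and the opposite extreme of short blocks where $g(\pi) \to 0$ and one needs $1 - 2/p(\pi)$ to vanish at least as fast as $g(\pi)^{1/2}$ rather than as fast as $g(\pi)$. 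Establishing tightness and the right constant will require carefully tracking the lower-order terms in $d_{GK}(\Speh(\rho,a))$ that I glossed over above (the $a \cdot d \cdot$(residue-degree) terms), since those are precisely what controls the gap between linear and square-root growth. For the general unitarizable case one adds the missing ingredient—arbitrary unitary representations are products of Speh blocks \emph{and} their Tadić "complementary series" deformations $\Speh(\rho,a)[\alpha]$ with $0 < \alpha < 1/2$, and the continuous parameter $\alpha$ can push $1 - 2/p(\pi)$ up by an extra $O(1/N)$ beyond what the discrete data allows—accounting for the $+2/N$ correction, which should fall out once the $\alpha$-dependence of both the exponents and the (generically zero, but semicontinuous) GK-dimension is tracked.
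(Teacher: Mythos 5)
Your high-level strategy --- compute both $d_{GK}(\pi)$ and $p(\pi)$ explicitly from the combinatorial data of the Arthur parameter and then compare the two formulas --- is exactly the paper's strategy. But the proposal stops at the plan in precisely the two places where the content lies, and it also contains an error at the start. The error: GK-dimension is \emph{not} additive under parabolic induction of Speh blocks. Induction from a Levi $\prod_i \GL_{N_i}$ adds the cross-term $\tfrac12(N^2 - \sum_i N_i^2)$ (half the dimension of the unipotent radical); e.g.\ two characters of $\GL_1$ each have $d_{GK}=0$ while the induced principal series of $\GL_2$ has $d_{GK}=1$. The correct statement (Corollary \ref{cor:ASL2toGKdim}) is that if $A(\pi)=[d_1,\dotsc,d_s]$ is the Arthur-$\SL_2$ partition, then $d_{GK}(\pi)=\tfrac12(N^2-\sum_j d_j^2)$, so that $g(\pi)=\tfrac1{N(N-1)}\sum_j d_j(d_j-1)$ is a convex combination, with weights $d_j/N$, of the quantities $(d_j-1)/(N-1)$. (Your calibration check via the trivial representation is also off: its wavefront set is the zero orbit, so $d_{GK}=0$ and $g=1$, not $g=1-2/N$.)

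The first genuine gap is the formula for $1-2/p(\pi)$, which you only guess at ("the upshot should be a clean formula of the shape..."). The precise answer is $1-2/p(\pi)=\frac{d_1-1}{N-a_1}$, where $d_1$ is the \emph{largest entry of $A(\pi)$} and $a_1$ is its multiplicity; the denominator $N-a_1$ (not $N$ or $N-1$) is exactly what makes the upper bound hold with constant $1$. Deriving it requires identifying $-\Xi(\pi)$ as the minimal exponent (via Bernstein's geometric lemma and the full-induction structure of unitarizable representations), applying Casselman's $L^p$ criterion to get $1-2/p=\max_i 2\sigma_i(\Xi(\pi))/(i(N-i))$, and then a nontrivial combinatorial argument (Lemmas \ref{lem:indexofmaximum1} and \ref{lem:indexofmaximum2}, the latter an induction on $d_1$) showing the maximum is attained at $i=a_1$. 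None of this is in the proposal. The second gap is the upper bound itself, which you correctly identify as the crux and then leave open. With the exact formulas in hand it reduces to
\[
\lf(\f{d_1-1}{N-m}\ri)^2 \;\leq\; \f{m\,d_1(d_1-1)}{N(N-1)}, \qquad m:=a_1,\ \ md_1\leq N,
\]
which needs an actual verification (e.g.\ minimizing $m(N-m)^2$ over $1\le m\le N/d_1$ at the endpoints); the lower bound, by contrast, is the easy direction and your convex-combination remark essentially handles it, since $g(\pi)\le\max_j\frac{d_j-1}{N-1}=\frac{d_1-1}{N-1}\le\frac{d_1-1}{N-a_1}$. Your intuition for the unitarizable case --- that the continuous parameters $x_i\in(-\tfrac12,\tfrac12)$ shift the exponents and cost at most $2/N$ --- is correct in spirit and matches the paper's comparison of $\pi$ with the Arthur-type representation $\pi^-$ obtained by setting all $x_i=0$, but again it is only asserted, not carried out.
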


When $p(\pi)=2$, this is simply the statement that Arthur-type representations are tempered if and only if they are generic: our new contribution is to measure the relationship between $p(\pi)$ and $d_{GK}(\pi)$ for more degenerate representations. As a corollary, we give bounds on fixed-vector growth and HCH-coefficients in terms of $p(\pi)$.

\begin{rmk}

It is tempting to call Theorem \ref{thm:mainuncertainty} an uncertainty principle.  Indeed, the fixed-vector growth and HCH-coefficients describe the behavior of $\pi$ ``around the identity" while matrix coefficient decay relates to the behavior of $\pi$ ``at infinity". The numerical identity we establish describes a phenomenon likely known to experts: for unitarizable representations of $\GL_N$, more singular analytic behavior of the character near the identity, characterized by a larger GK dimension, corresponds to more well-behaved (i.e. closer to square-integrable) behavior of the matrix coefficients at infinity, and vice versa.
\end{rmk}

\subsection{Generalizations}
The focus of this paper is on $\GL_N$, but we expect both uniform bounds on fixed vectors and an uncertainty principle to hold for complex-valued representations of general reductive $p$-adic groups. For the uniform upper bounds, a crucial input is a bound on the support of certain matrix coefficients of supercuspidal representations, which is conjectured in \cite[\S 4]{FLM11} to hold generally and is known in several cases.

Beyond this, our proofs rely heavily on the explicit combinatorics of the Langlands and Zelevinsky classification for $\GL_N$ as detailed in \cite{LM14, LM16}. It is therefore unlikely that our methods would carry through verbatim. For classical groups, we mention recent progress on the local Jiang's conjecture, which aims to describe the wavefront set of representations inside of an Arthur packet \cite{LS25} in terms of the attached $A$-parameter. This, combined with known computations of the $p(\pi)$ for $A$-packets of classical groups in terms of the Arthur $SL_2$ \cite{DEP, EGGG}, could provide an approach to prove an uncertainty principle for classical $A$-packets.

As the most general possible conjecture for fixed vectors:
\begin{conj}
Let $G$ be a reductive group over $p$-adic field $F$, $x$ a point in its Bruhat-Tits building, and $K_\ell := K_{G, x, \ell}$ the $\ell$th Moy-Prasad subgroup at $x$. Then for all $\eps > 0$, there is a uniform constant $C := C_{\eps, G, F, x}$ such that for any smooth irreducible representation $\pi$ of $G(F)$:
\[
\dim(\pi^{K_\ell}) \leq C_\eps q^{\ell (d_{GK}(\pi) + \eps) }.
\]
\end{conj}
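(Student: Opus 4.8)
The plan is to follow the architecture of the $\GL_N$ proof as far as the general structure theory permits. The key soft input is that $V \mapsto V^{K_\ell}$ is an exact functor on smooth representations, so $\dim(\pi^{K_\ell}) \le \dim(I^{K_\ell})$ whenever $\pi$ is a subquotient of a smooth representation $I$. Since every irreducible $\pi$ is a subquotient of some parabolically induced $i_P^G(\sigma)$ with $\sigma$ supercuspidal on a Levi factor $M$ of a parabolic $P = MU$, the strategy splits into three parts: (i) prove the bound for supercuspidals; (ii) bound $\dim\bigl(i_P^G(\sigma)^{K_\ell}\bigr)$ in terms of $\dim(\sigma^{K'_\ell})$ for suitable Moy--Prasad subgroups $K'_\ell$ of $M$; and (iii) descend from $i_P^G(\sigma)$ to the constituent $\pi$ while tracking the Gelfand--Kirillov dimension.

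For (i), the essential input is the conjectural bound of \cite[\S 4]{FLM11} on the support of a well-chosen matrix coefficient of $\sigma$ by a ball whose radius is governed by the depth of $\sigma$; this is known in many cases and replaces the Whittaker-model argument of \cite[Lem.\ A.1]{MS19}. Counting the $K_\ell$-double cosets meeting that support gives $\dim(\sigma^{K_\ell}) \le C_\eps q^{\ell(d_{GK}(\sigma) + \eps)}$ when $\sigma$ is generic. For a non-generic supercuspidal of $G$ --- which, in contrast to $\GL_N$, may exist --- one must instead feed in the degenerate Whittaker models of \cite{MW87} (and \cite{Var14} for $p=2$) attached to the wavefront orbit of $\sigma$, exactly as Theorem \ref{thm:pre speh bound} does for Speh representations.

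For (ii), the Iwasawa decomposition identifies $i_P^G(\sigma)^{K_\ell}$ with a space of functions on the compact set $P\backslash G$ supported on finitely many $K_\ell$-orbits, each contributing a copy of $\sigma^{K'_\ell}$; since the big cell of $P\backslash G$ is an affine space of dimension $\dim\mf u$, the number of relevant orbits grows like $q^{\ell\dim\mf u}$, whence $\dim\bigl(i_P^G(\sigma)^{K_\ell}\bigr) \lesssim q^{\ell\dim\mf u}\,\dim(\sigma^{K'_\ell})$. On the other hand, $d_{GK}\bigl(i_P^G(\sigma)\bigr) = d_{GK}(\sigma) + \dim\mf u$: this is the GK-dimensional shadow of the compatibility of wavefront sets with parabolic induction, using $\dim \mathrm{Ind}_M^G(O) = \dim O + 2\dim\mf u$ for induced nilpotent orbits. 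Combining the last two displays with (i) yields $\dim\bigl(i_P^G(\sigma)^{K_\ell}\bigr) \le C_\eps q^{\ell(d_{GK}(i_P^G(\sigma)) + \eps)}$.

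The real obstacle is (iii). Because $\pi$ is merely a subquotient, the above only gives $\dim(\pi^{K_\ell}) \le C_\eps q^{\ell(d_{GK}(i_P^G(\sigma)) + \eps)}$, and $d_{GK}(\pi)$ is often strictly smaller --- this is precisely the gap the Zelevinsky classification closes for $\GL_N$ by placing $\pi$ in a filtration whose graded pieces, assembled from Speh representations, have individually transparent GK-dimensions \cite{LM14, LM16}. No such refinement of parabolic induction exists for general $G$, so one would need a substitute: either a ``GK-dimension drop'' statement ensuring that the excess $K_\ell$-fixed vectors of $i_P^G(\sigma)$ are carried by its constituents other than $\pi$, or else a direct proof of a uniform bound on the Harish-Chandra--Howe coefficients $|c_O(\pi)|$ via degenerate Whittaker models (cf.\ \cite{JL24wavefront}) followed by reconstruction of $\dim(\pi^{K_\ell})$ from the character expansion of Theorem \ref{thm:HCH} and the homogeneity of Lemma \ref{lem:homogenity}. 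The second route merely relocates the difficulty: it demands a bound, uniform in $\pi$, on the level past which the local character expansion controls $\dim(\pi^{K_\ell})$, i.e.\ a uniform comparison of depth and level, which is itself delicate. The prospects are best for Arthur-type $\pi$, where the conjectural description of wavefront sets inside Arthur packets (the local Jiang conjecture, with recent progress in \cite{LS25}) together with the combinatorics of $A$-packets should render the graded pieces tractable, paralleling Section \ref{sec:Atype}.
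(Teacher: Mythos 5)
The statement you are asked to prove is stated in the paper as a \emph{conjecture}: the authors give no proof, and they explicitly flag uniformity in $\pi$ as the open difficulty (the non-uniform bound being immediate from the local character expansion and Lemma \ref{lem:homogenity}). Your proposal is therefore not in competition with a proof in the paper; it is a roadmap, and you are candid that it does not close. As a roadmap it is consistent with the authors' own remarks in the Generalizations section: they too identify the support bound of \cite[\S 4]{FLM11} as the essential input for supercuspidals, and they too say that the rest of their argument leans on $\GL_N$-specific combinatorics from \cite{LM14, LM16} and is unlikely to carry over verbatim.

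Beyond the gap you already name in step (iii), two of your intermediate steps are weaker than you present them. In (i), the assertion that non-generic supercuspidals can be handled ``exactly as Theorem \ref{thm:pre speh bound} does for Speh representations'' overlooks that the engine of that theorem is Lemma \ref{lem:restriction}: injectivity of restriction of the degenerate Whittaker model to $\GL_{N-1}$, which rests on the mirabolic subgroup and the irreducibility of $\pi|_{P_N}$ from Zelevinsky's derivative theory. There is no analogue of the mirabolic filtration for a general reductive $G$, and the existence, uniqueness, and ``restrictability'' of a degenerate Whittaker model attached to the wavefront orbit are all open in general (this is part of why the wavefront set itself is not computed outside $\GL_N$ and a few other cases). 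In (ii), the identity $d_{GK}(i_P^G(\sigma)) = d_{GK}(\sigma) + \dim\mf u$ presupposes that the wavefront set of a parabolic induction is the induced orbit of the wavefront set of $\sigma$; this compatibility is expected but is itself unproven in general, and without it even the bound for the full induced representation does not have the claimed exponent. So steps (i) and (ii) each import an open problem, and step (iii) is, as you say, the place where the $\GL_N$ proof uses the Zelevinsky classification in an essential and currently irreplaceable way. The proposal is a reasonable account of why the statement is a conjecture rather than a theorem, but it is not a proof.
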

Note again that uniformity is the key difficulty here since the non-uniform variant holds by the HCH expansion.

\subsubsection*{Acknowledgements}
We thank Marie-France Vigneras and Yiannis Sakellaridis for help conceptually understanding the area, Alberto Minguez and Johannes Droschl for teaching us useful facts about the representation theory of $p$-adic $\GL_N$, Alex Cowan for giving us the tools to make the computations that led to the formulation of the uncertainty result, and Karol Koziol for jokingly suggesting that we refer to said result as an ``uncertainty principle" (though he should not be held responsible). 

RD was supported by Principal Investigator project PAT4832423 of the Austrian Science Fund (FWF) while working on this project. MGG was supported by an NSERC Discovery Grant, and SM was supported by NSF grant DMS-1902173.

\subsubsection*{Notational Conventions}
Denote by $[a_1^{(r_1)}, \dotsc, a_n^{(r_n)}]$ the unordered multiset containing $r_i$ copies of each $a_i$. Partitions are represented as multisets. 

Parentheses $(a_1^{(r_1)}, \dotsc, a_n^{(r_n)})$ instead denote the ordered $(r_1 + \cdots + r_n)$-tuple/list where the $a_i$ term is repeated $r_i$ times. 

The disjoint union symbols $\sqcup$  and $\bigsqcup_{i=1}^n$ represent either disjoint union of multisets or concatenation of ordered lists (ordered by the $i$ index for the big square cup). 
%\red{fill in} MGG: It's a 25 page paper, I really don't think we need it!

\section{Background}\label{sec:background}
\subsection{Nilpotent orbits in $\GL_N$}

Nilpotent orbits in $\GL_N$ are parameterized by the set $\mc P(N)$ of partitions of $N$ via the Jacobson-Morozov theorem, see e.g. \cite{CM93nilpotent}. %This boils down to the Jordan canonical form: each nilpotent orbit contains a unique matrix whose only nonzero entries are 1's above the diagonal. By grouping together the adjacent diagonal entries that share a $1$, one obtains a partition of $N$. 
In this parameterization, the natural closure ordering on orbits corresponds to the partial ordering on $\mc P(N)$ where $P_1 \preceq P_2$ if $P_1$ refines $P_2$. For example, the trivial, minimal, and regular nilpotent orbit correspond respectively to 
\[ 
P_{0}=[1^{(N)}], \quad P_{\mathrm{min}}=[2,1^{(N-1)}], \quad P_{\text{max}}=[N]. 
\] 
The set $\mc P(N)$ is equipped with an involution sending $P$ to its dual partition 
\begin{equation*} \label{eq: dual partition}
\hat{P} :=  [\#{\text{ parts of }P\text{  of size}\geq j}]_j. 
\end{equation*}
One derives a dimension formula for nilpotent orbits. If $\hat{P} = [\hat{N}_1,...,\hat{N}_r]$, then 
\begin{equation}\label{eq orbitdimension}
\dim O_P = N^2 - \sum_{i=1}^r \hat{N}^2_i.
\end{equation}

\subsection{Bernstein-Zelevinsky Classifications}
Bernstein--Zelevinsky give two classifications of the irreducible representations of $\GL_N$. One is the Langlands classification, in which irreducibles are realized as quotients of parabolic inductions of tempered/Steinberg representations. The other is the Zelevinsky classification, in which irreducible representations appear as subrepresentations of parabolic inductions of anti-tempered/Speh representations. 
We recall the combinatorics of these classifications: the original references are \cite{BZ77,Zel80}, but we follow the summary in \cite[\S2]{LM16}.

\subsubsection{Multisegments}
Let $N_i$ for $1 \leq i \leq r$ be such that $\sum_{i=1}^r N_i = N$. Given representations $\pi_i$ of $\GL_{N_i}$, define the normalized parabolic induction 
%Given representations $\pi_i$ for $1 \leq i \leq r$ on $\GL_{N_i}(F)$ with $\sum_{i=1}^r N_i = N$, we define
\[
\pi_1 \times \cdots \times \pi_r := \mc I_{\GL_{N_1}(F) \times \cdots \times \GL_{N_r}(F)}^{\GL_N(F)} (\pi_1 \boxtimes \cdots \boxtimes \pi_r)
\]
%to be the (normalized) parabolic induction. 
For a representation $\pi$ of $\GL_N$, we denote by $|\cdot|^x \pi$ the twist of $\pi$ by the $x\th$ power of the norm-of-determinant character. 

\begin{dfn}
Let $\rho$ be a  supercuspidal representation of $\GL_N$, and $a,b \in \R$ such that $b-a \in \Z_{\geq 0}$. The corresponding \emph{segment} is the set:
%A \emph{segment} is a set of representation of $\GL_N$ of the form
%subset of supercuspidal representations of $\GL_N$ of the form 
\[
\langle a, b \rangle_{\rho} := \{|\cdot|^a\rho, |\cdot|^{a+1} \rho, \dotsc, |\cdot|^b \rho\}.
\]
Note that the triple $(a,b,\rho)$ is not uniquely determined by the segment.

Two segments $\langle a, b \rangle_\rho$ and $\langle a',b' \rangle_{\rho'}$ are said to be \emph{linked} if their union is also a segment and neither is contained in the other. We say that $\langle a, b \rangle_\rho$ \emph{precedes} $\langle a',b' \rangle_{\rho'}$ if they are linked and $|\cdot|^{a'}\rho' = |\cdot|^{a + j}\rho$ for some integer $j > 0$. %\blue{MGG: is it true that if two segments are linked then one necessarily precedes the other? Yes, this should be true}
\end{dfn}

\begin{note}
There is always a unique way to represent a segment as $\langle a, b \rangle_\rho$ for $\rho$ unitary. If $\rho,\rho'$ are unitary, then $\langle a, b \rangle_\rho$ precedes $\langle a',b' \rangle_{\rho'}$ if and only if:
\begin{enumerate}
   \item $\rho = \rho'$,
   \item $a' - a \in \Z$,
   \item $a < a', b < b', a' \leq b+1$. 
\end{enumerate}  
\end{note}

\begin{dfn}
A \emph{multisegment} is a multiset of segments, denoted $[\langle a_i,b_i \rangle_{\rho_i}]_i$. We say that $[\langle a_i,b_i \rangle_{\rho_i}]_i$ is \emph{ordered} if for all $i < j$, $\langle a_i,b_i \rangle_{\rho_i}$ does not precede $\langle a_j,b_j \rangle_{\rho_j}$. 
\end{dfn}

Both the Langlands and Zelevinsky classifications of representations of $\GL_N(F)$ are given in terms of multisegments. 

\subsubsection{Langlands Classification}

\begin{thm}[Langlands classification]\label{thm:langlandsclassification}
Every smooth, irreducible representation~$\pi$ of $\GL_N(F)$ corresponds to a unique multisegment $\check M(\pi) = [\langle a_i,b_i \rangle_{\rho_i}]_{i=1}^r$ as follows:
\begin{enumerate}
    \item For each segment $m_i = \langle a_i,b_i \rangle_{\rho_i}$, there is a unique irreducible quotient:
    \[
    |\cdot|^{a_i} \rho_i \times \cdots \times |\cdot|^{b_i} \rho_i \onto L(m_i),
    \]
    which is necessarily square-integrable modulo the center. 
    \item $\pi$ is the unique irreducible quotient:
    \[
    L(m_1) \times \cdots \times L(m_r) \onto L(\check M) = \pi
    \]   
    whenever the $m_i$ are indexed so that $(m_1, \dotsc ,m_r)$ is ordered. 
    \item If none of the $m_i$ are linked, then $L(m_1) \times \cdots \times L(m_r)$ is irreducible. %remove and reindex citations if this part isn't needed
    \item If all the $\rho_i$ are unitary, $\pi$ is tempered modulo the center if and only if the $a_i + b_i$ are all equal. 
\end{enumerate}
\end{thm}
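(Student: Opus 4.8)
The plan is to reconstruct the statement from the Bernstein--Zelevinsky machinery of \cite{BZ77, Zel80}, working throughout with Jacquet modules and their semisimplifications (via the geometric lemma) together with Casselman's criteria \cite{casselman1995introduction} for recognizing square-integrability and temperedness from exponents. For part (1), I would fix a segment $m = \langle a,b\rangle_\rho$ with $\rho$ supercuspidal on $\GL_{n_\rho}(F)$ and analyze $\zeta(m) := |\cdot|^a\rho\times\cdots\times|\cdot|^b\rho$ along the parabolic whose Levi is a product of $b-a+1$ copies of $\GL_{n_\rho}$: among its exponents the ``ordered'' term $|\cdot|^a\rho\otimes\cdots\otimes|\cdot|^b\rho$ occurs with multiplicity one and is extremal, so by adjunction $\zeta(m)$ has a unique irreducible quotient $L(m)$. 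That $L(m)$ is essentially square-integrable follows by verifying that its recentered exponents are strictly negative (Casselman's criterion); concretely it is the $|\cdot|^{(a+b)/2}$-twist of a generalized Steinberg representation. The converse --- that every essentially square-integrable irreducible of $\GL_N(F)$ arises as some $L(m)$ --- is the deepest input at this stage, and I would cite it from \cite{Zel80}.

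Next, for parts (2)--(3): given a multisegment $M = [m_1,\dots,m_r]$ one can always reindex so that $(m_1,\dots,m_r)$ is ordered, because the relation ``precedes'' is acyclic --- along any precedence chain the left endpoint of the segment strictly increases --- so a compatible linear order exists. For an ordered multisegment the exponents of $L(m_1)\times\cdots\times L(m_r)$ lie in the closed positive cone, and the $p$-adic Langlands quotient theorem then produces a unique irreducible quotient $L(M)=\pi$, together with the facts that distinct ordered multisegments give non-isomorphic quotients and that every irreducible is obtained --- which is precisely the uniqueness of $\check M(\pi)$. For part (3) I would invoke Zelevinsky's irreducibility criterion: by induction on $r$ it reduces to $r=2$, where two unlinked segments give an irreducible product because the only reducibility of $L(m_1)\times L(m_2)$ occurs at linked pairs, as one sees by a rank-one reduction to the known reducibility points of $|\cdot|^x\rho\times|\cdot|^y\rho$ together with a length count on Jacquet modules.

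For part (4), assume all $\rho_i$ unitary, and use the characterization that an irreducible $\pi$ of $\GL_N(F)$ is tempered modulo the center iff $\pi \cong |\cdot|^s(\delta_1\times\cdots\times\delta_r)$ for some $s\in\R$ and discrete series representations $\delta_j$ of smaller general linear groups, such an induction being automatically irreducible. Since $L(\langle a,b\rangle_\rho)$ with $\rho$ unitary is discrete series exactly when $a+b=0$, this says $\pi = L(m_1)\times\cdots\times L(m_r)$ with the $\rho_i$ unitary and $a_i+b_i$ independent of $i$ (the common value being $2s$). If all $a_i+b_i=c$, then no two $m_i$ are linked --- two segments with the common center $c/2$ either lie on different supercuspidal lines or have one containing the other, and in neither case are they linked --- so by (3) $L(M) = L(m_1)\times\cdots\times L(m_r)$, a $|\cdot|^{c/2}$-twist of an induction of unitary discrete series, hence tempered mod center. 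Conversely, if $L(M)$ is tempered mod center, writing it in the above form and appealing to the uniqueness in part (2) forces the $a_i+b_i$ to all equal $2s$.

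The step I expect to require the most care is the irreducibility criterion in part (3) and its use in part (4): establishing that linkedness is the \emph{only} source of reducibility of a product $L(m_i)\times L(m_j)$, and then matching the Bernstein--Zelevinsky combinatorics against the abstract Langlands data (parabolic, tempered part, positive exponent) so that ``tempered modulo the center'' translates exactly into ``the $a_i+b_i$ are all equal''. The exhaustion of essentially square-integrable representations by segments used in the first step is also deep, but there I would quote \cite{Zel80} rather than reprove it.
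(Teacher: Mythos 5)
The paper gives no proof of this theorem: it is recalled verbatim as background from \cite{BZ77,Zel80}, following the summary in \cite[\S2]{LM16}. Your sketch correctly reproduces the standard Bernstein--Zelevinsky/Casselman argument from exactly those references (multiplicity one of the ordered exponent in the Jacquet module for (1), the Langlands quotient theorem plus commutativity of unlinked products for (2), Zelevinsky's irreducibility criterion for (3), and Jacquet's characterization of tempered representations as irreducible inductions of discrete series for (4)), so it is consistent with the approach the paper implicitly relies on.
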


Representations constructed in (1) of the above theorem are known as (determinant twist of) \emph{generalized Steinberg representations}. Accordingly, we will denote
\[
L(\langle a, b \rangle_{\rho}) =: |\cdot|^{(a+b)/2} \St(b-a + 1, \rho).
\]

Lastly, we introduce an invariant from which we will compute the rate of decay of matrix coefficients of $\pi$ in \S \ref{sec:matrixcoefficientdecay}.
\begin{dfn} \label{def: mainexponent}
Given a multisegment $\check M = [\langle a_i,b_i \rangle_{\rho_i}]_{i=1}^r$, with  supercuspidal representations $\rho_i$ of~$\GL_{N_i}(F)$, define its \emph{character} $\Xi(\check M)$ to be the multiset of cardinality~$N$ given by
\[
\Xi(\check M) := \lf[\lf(\f{a_1 + b_1}2\ri)^{(N_1(b_1 - a_1 + 1))}, \dotsc, \lf(\f{a_r + b_r}2\ri)^{(N_r(b_r - a_r +1))}\ri],
\]
and set
\[
\Xi(\pi) := \Xi(\check M(\pi)). 
\]

\end{dfn}

\subsubsection{Zelevinsky Classification}

\begin{thm}[Zelevinsky classification]
Every smooth irreducible representation $\pi$ of $\GL_N(F)$ corresponds to a unique multisegment $M(\pi) = [\langle a_i,b_i \rangle_{\rho_i}]_{i=1}^r$ as follows:
\begin{enumerate}
    \item For each segment $m_i = \langle a_i,b_i \rangle_{\rho_i}$, there is a unique irreducible subrepresentation:
    \[
    Z(m_i) \into |\cdot|^{a_i} \rho_i \times \cdots \times |\cdot|^{b_i} \rho_i.
    \]
    \item $\pi$ is the unique irreducible subrepresentation:
    \[
    \pi = Z(M) \into Z(m_1) \times \cdots \times Z(m_r)
    \] 
    whenever the $m_i$ are indexed so that $(m_1, \dotsc, m_r)$ is ordered. 
    \item If none of the $m_i$ are linked, then $Z(m_1) \times \cdots \times Z(m_r)$ is irreducible.
\end{enumerate}
\end{thm}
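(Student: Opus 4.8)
The plan is to run the Bernstein--Zelevinsky argument: work in the Grothendieck group $R$ of finite-length smooth representations of all $\GL_n(F)$ at once, on which normalized parabolic induction makes $R$ a commutative associative ring, and exhibit the representations $Z(M)$ as a unitriangular perturbation of the standard modules $\zeta(M) := Z(m_1)\times\cdots\times Z(m_r)$. First I would reduce to a single cuspidal line: if the supercuspidal support of an irreducible $\pi$ splits as $\mathcal S_1 \sqcup \mathcal S_2$ with the $\mathcal S_i$ supported on lines $\Z\rho_i := \{|\cdot|^n\rho_i : n\in\Z\}$ with $\rho_1 \not\cong |\cdot|^x\rho_2$ for all $x$, then $\pi \cong \pi_1 \times \pi_2$ with $\pi_i$ irreducible of support $\mathcal S_i$, this product is always irreducible, and correspondingly every multisegment splits as a disjoint union of multisegments on single lines. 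So it suffices to prove the theorem when all $\rho_i$ lie on one line, where everything is governed by the combinatorics of multisets of intervals in $\Z$.

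Second I would treat a single segment $m=\langle a,b\rangle_\rho$. Computing the Jacquet module of $|\cdot|^a\rho\times\cdots\times|\cdot|^b\rho$ along the parabolic whose Levi is a product of copies of $\GL_{\deg\rho}$, via the Bernstein--Zelevinsky geometric lemma, one gets a sum over permutations in which the ``increasing'' term $|\cdot|^a\rho\boxtimes\cdots\boxtimes|\cdot|^b\rho$ occurs exactly once, at the bottom of the filtration. By Frobenius reciprocity this forces the socle to be irreducible --- this is $Z(m)$ --- and likewise the cosocle, which is the generalized Steinberg $L(m)$ of the Langlands theorem above. Next I would prove the unlinked-commutativity lemma: $Z(m)\times Z(m') \cong Z(m')\times Z(m)$, and this product is irreducible, exactly when $m$ and $m'$ are not linked --- again a comparison of the Jacquet modules computed under the two orderings. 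Iterating gives (3), and it also shows that any two ordered indexings of a fixed multisegment $M$ differ by transpositions of adjacent unlinked segments, so that $\zeta(M)$, and in particular its socle, is well-defined up to isomorphism.

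Third, for a general ordered product $\zeta(M)$ I would again apply the geometric lemma: because $(m_1,\dots,m_r)$ is ordered, exactly one summand of the relevant Jacquet module reaches the bottom of the filtration and it does so with multiplicity one, so $\zeta(M)$ has irreducible socle $Z(M)$, occurring in $\zeta(M)$ with multiplicity one. By induction over the number of segments and the Zelevinsky partial order $\prec$ on multisegments of fixed supercuspidal support --- generated by replacing a linked pair $\Delta,\Delta'$, with $\Delta$ preceding $\Delta'$, by $\Delta\cup\Delta'$ together with $\Delta\cap\Delta'$ --- the same computation yields in $R$
\[
[\zeta(M)] = [Z(M)] + \sum_{M' \prec M} c_{M,M'}\,[Z(M')].
\]
Now the classes of the full cuspidal inductions --- which are exactly the standard modules $\zeta(M_0)$ attached to multisegments all of whose segments are singletons (these are the $\prec$-maximal multisegments) --- span $R$, since every irreducible is a constituent of the induction from its own supercuspidal support. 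Hence the $\{[\zeta(M)]\}$ span $R$; by the displayed unitriangular relation the $\{[Z(M)]\}$ are then a $\Z$-basis of $R$; and since $R$ is free on the irreducibles and each $Z(M)$ is irreducible, $M\mapsto Z(M)$ must biject onto the irreducibles. This gives the unique $M(\pi)$ with $\pi=Z(M(\pi))$, which is (2).

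The main obstacle is the Jacquet-module bookkeeping for the ordered product $\zeta(M)$: the geometric lemma writes the relevant Jacquet functor as a sum over double cosets in a Weyl group, and one must show that precisely one term reaches the bottom of the filtration, with multiplicity one, and that every other irreducible constituent of $\zeta(M)$ lies strictly below $M$ under $\prec$. This is exactly where the definitions of ``linked'' and ``precedes'' and the ordered hypothesis enter essentially, and it is the heart of Zelevinsky's argument; the unlinked-commutativity lemma is a secondary computation of the same flavour. As a consistency check, the statement also follows from the Langlands classification above together with the Aubert--Zelevinsky involution, which exchanges $L(m)$ and $Z(m)$ on single segments and sends standard modules to standard modules --- but constructing that involution requires comparable Jacquet-module input, so it is not a genuine shortcut.
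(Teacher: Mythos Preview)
The paper does not prove this theorem: it is stated as background in Section~\ref{sec:background}, with the original references given as \cite{BZ77,Zel80} and the exposition following \cite[\S2]{LM16}. There is therefore no ``paper's own proof'' to compare against.

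Your sketch is a faithful outline of Zelevinsky's original argument in \cite{Zel80}: reduce to a single cuspidal line, treat single segments via the geometric lemma to extract an irreducible socle, prove the unlinked case is irreducible, and then run the unitriangular change-of-basis argument in the Grothendieck ring to conclude that $M\mapsto Z(M)$ is a bijection onto irreducibles. This is correct in structure. One small point of care: your displayed relation $[\zeta(M)] = [Z(M)] + \sum_{M'\prec M} c_{M,M'}[Z(M')]$ has the order reversed relative to Zelevinsky's convention --- the elementary operation of replacing a linked pair by their union and intersection produces a multisegment with \emph{fewer} segments, and the other constituents of $\zeta(M)$ are of this type, so in Zelevinsky's ordering they sit \emph{above} $M$, not below. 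This does not affect the unitriangularity argument (you just need a partial order with finite chains, which this is, since the supercuspidal support is fixed), but you should be consistent with whatever convention you adopt. Your closing remark about deducing the result from the Langlands classification plus Aubert--Zelevinsky duality is also accurate, including the caveat that it is not a genuine shortcut.
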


Similarly to the case of the Langlands classification, we will use the notation 
\begin{equation} \label{eq: Speh def}
Z(\langle a, b \rangle_{\rho}) =: |\cdot|^{(a+b)/2} \Speh(b-a + 1, \rho) 
\end{equation}
and call the corresponding representation a (determinant twist of a) \emph{Speh representation}.\\

The following invariant will determine the wavefront set, and ultimately the GK-dimension, of a representation $\pi$ in the Zelevinsky classification:

\begin{dfn} \label{def: partition}
 Given a multisegment $M = [\langle a_i,b_i \rangle_{\rho_i}]_{i=1}^r$, with supercuspidals $\rho_i$ of $\GL_{N_i}(F)$, we define its \emph{partition}
\[
P(M) := [(a_1-b_1+1)^{(N_1)}, \dots, (a_r-b_r+1)^{(N_r)}]
\]
and write
\[
P(\pi) := P(M(\pi)). 
\] 
\end{dfn}

%%%%%%%%leave here in case needed later%%%%%%%%%%%%%%%%%
% As another useful fact:
% \begin{prop}\label{prop:maxgk}
% Let $M = (M_i)_{i=1}^r$ be a multisegment and $\pi$ a subquotient of $Z(M_1) \times \cdots \times Z(M_r)$. Then $P(Z(M))$ is a refinement of $P(\pi)$. \red{this is more subtle, we directly get that the GK-dimension is smaller by \cite{MW87}}
% \end{prop}

% \begin{proof}
% \red{cite??}. 
% \end{proof}

\subsubsection{Duality}
The space of smooth irreducible representations of a reductive $p$-adic group $G$ is equipped with the \emph{Aubert-Zelevinsky} involution $\pi \mapsto \check{\pi}$. For $G=GL_N$, this involution has the property:
%There is an involution $\pi \mapsto \check \pi$ on the space of irreducible representations of any \blue{reductive?}$p$-adic group called the \emph{Aubert-Zelevinsky involution}. We will use the property:

\begin{thm}[{\cite[\S A.5]{LM16}}]\label{thm:dualsegment}
Let $\pi$ be a smooth irreducible representation of $\GL_N(F)$. Then $M(\pi) = \check M(\check \pi)$.  
\end{thm}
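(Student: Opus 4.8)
The statement to prove is Theorem~\ref{thm:dualsegment}: for $\pi$ a smooth irreducible representation of $\GL_N(F)$, the Aubert--Zelevinsky dual $\check\pi$ satisfies $M(\pi)=\check M(\check\pi)$, i.e.\ the Zelevinsky multisegment of $\pi$ equals the Langlands multisegment of $\check\pi$. (Since this is cited as \cite[\S A.5]{LM16}, the plan below is essentially how one reconstructs the argument.)

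The plan is to reduce to the level of the Grothendieck group and exploit the characterizing properties of the two classifications. First I would recall that the Aubert involution, defined as an alternating sum of Jacquet-restriction-then-parabolic-induction over standard parabolics, descends to a ring involution $\mathrm{DS}$ on the Grothendieck ring $\mathcal R$ of finite-length representations of the $\GL_n$'s, and that (up to sign) it sends an irreducible to an irreducible; this is Aubert's theorem. The key structural input is its compatibility with Zelevinsky's own duality $t$ on multisegments: on a single segment $m=\langle a,b\rangle_\rho$ one has that $\mathrm{DS}$ sends (the class of) the Speh representation $Z(m)$ to $\pm$ the class of the generalized Steinberg $L(m)$ and vice versa — essentially because $Z(m)$ is a subrepresentation and $L(m)$ a quotient of the \emph{same} standard module $|\cdot|^a\rho\times\cdots\times|\cdot|^b\rho$, and the Aubert involution swaps socle and cosocle constituents while fixing the standard module up to sign in $\mathcal R$. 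Since $\mathrm{DS}$ is a ring map, it then sends the product $Z(m_1)\times\cdots\times Z(m_r)$ to $\pm L(m_1)\times\cdots\times L(m_r)$ in $\mathcal R$, for the \emph{same} multiset of segments.

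Next I would run the combinatorics of the two classifications in parallel. Fix the multisegment $M=M(\pi)=[\langle a_i,b_i\rangle_{\rho_i}]_{i=1}^r$, ordered so that $(m_1,\dots,m_r)$ is ordered in the Zelevinsky sense. Then $\pi=Z(M)$ is the unique irreducible subrepresentation of $\Pi:=Z(m_1)\times\cdots\times Z(m_r)$, so in $\mathcal R$ we have $[\Pi]=[\pi]+(\text{constituents with strictly smaller Zelevinsky parameter})$. Applying $\mathrm{DS}$: $\pm[\mathrm{DS}\,\Pi]=[L(m_1)\times\cdots\times L(m_r)]$ is, by the Langlands classification, $[L(M)]+(\text{lower terms})$, where $L(M)$ is the Langlands quotient attached to the multiset $M$ (one checks that a multiset ordered for Zelevinsky is, after possibly reversing, in the right shape for the Langlands standard module, or one simply invokes that $L(\cdot)$ depends only on the multiset). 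The crucial point is that $\mathrm{DS}$ preserves irreducibility and respects the relevant partial orders — Aubert duality is order-reversing on the closure order but what matters here is that it carries the unique ``top'' constituent (Zelevinsky socle of $\Pi$) to the unique ``top'' constituent (Langlands cosocle of $\mathrm{DS}\,\Pi$). Hence $\pm[\mathrm{DS}\,Z(M)]=[L(M)]$, i.e.\ $\check\pi=\check{Z(M)}$ is the irreducible $L(M)$, whose Langlands multisegment $\check M(\check\pi)$ is by definition $M$. This gives $\check M(\check\pi)=M=M(\pi)$.

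The main obstacle is the bookkeeping in the Grothendieck group: one must show that the lower-order constituents really do match up under $\mathrm{DS}$, which amounts to knowing that Aubert duality interchanges the Zelevinsky and Langlands partial orders on the set of irreducibles with fixed supercuspidal support (equivalently, on multisegments it realizes the Zelevinsky involution $t$ composed with the order-reversing symmetry), and that $\mathrm{DS}$ takes irreducibles to $\pm$irreducibles so that the ``unique subrep/unique quotient'' characterizations transfer cleanly. Establishing the single-segment base case $\mathrm{DS}\,Z(m)=\pm L(m)$ and the multiplicativity of $\mathrm{DS}$ are the two technical lemmas to pin down first; everything else is formal once those are in hand. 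An alternative, more hands-on route — induct on $N$, peel off a minimal/maximal segment, and use exactness properties of the derivative functors of Bernstein--Zelevinsky together with the known behavior of $\mathrm{DS}$ under Jacquet functors — would also work and avoids some Grothendieck-group abstraction, but requires careful control of linked-segment configurations; I would fall back on it only if the clean argument above runs into trouble with the order-matching step.
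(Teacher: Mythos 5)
The paper gives no proof of this statement at all: it is quoted verbatim from \cite[\S A.5]{LM16}, so there is nothing in the text to compare against except the standard argument in that reference, which is essentially what you reconstruct. Your outline (Aubert's theorem that $\mathrm{DS}$ is a ring involution sending irreducibles to $\pm$irreducibles, the identity $\mathrm{DS}[Z(m_1)\times\cdots\times Z(m_r)]=\pm[L(m_1)\times\cdots\times L(m_r)]$, and then matching top constituents) is the right route. One caution: the phrase ``carries the Zelevinsky socle of $\Pi$ to the Langlands cosocle of $\mathrm{DS}\,\Pi$'' is not literally a mechanism, since $\mathrm{DS}$ acts only on the Grothendieck group and does not transport submodule structure; the actual engine is the unitriangularity, with respect to the same partial order on multisegments with fixed cuspidal support, of both expansions $[Z(m_1)\times\cdots\times Z(m_r)]=[Z(M)]+(\text{higher terms})$ and $[L(m_1)\times\cdots\times L(m_r)]=[L(M)]+(\text{higher terms})$, combined with an induction from the maximal (unlinked) multisegments downward, at which point comparing coefficients of $[L(M)]$ forces $\mathrm{DS}[Z(M)]=\pm[L(M)]$. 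You correctly flag this bookkeeping as the main obstacle and name the right lemmas (the single-segment case is in fact just an instance of the same induction applied to a product of cuspidal twists), so I would count the proposal as a correct skeleton of the cited proof rather than a complete argument; note also that one does not need to know a priori that Aubert duality interchanges the two partial orders --- that is a consequence of the induction, not an input.
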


Moeglin--Waldspurger \cite{MW86} give an algorithm to compute $M(\check \pi)$ from $M(\pi)$ and vice versa.

\subsection{Degenerate Whittaker Models}
We next introduce degenerate Whittaker models, which arise in \cite{MW87} in the proof  of Theorem \ref{thm:wavefront}. They will play a central role in our uniform upper bounds. 
%The proof in \cite{MW87} of Theorem \ref{thm:wavefront} is by relating wavefront sets to the existence of certain degenerate Whittaker models---this was mostly worked out in in the case of $\GL_N$ by \cite{Zel80}. 

Let $U \subseteq \GL_N(F)$ be the upper-triangular unipotent matrices. Given a partition $P = [d_1, \dotsc, d_r]$ of $N$, let 
\[
S = \{d_1, d_1 + d_2, \cdots, d_1 + \cdots + d_{r-1}\} \subseteq \{1, \dotsc, N-1\}
\]
and define a character $\psi_P$ of $U$ by
\[
\psi_P : g \mapsto \psi_F\lf(\sum_{i \notin S} g_{i,i+1} \ri),
\]
where the $g_{i,i+1}$ are the above-the-diagonal entries of $g$ and $\psi_F$ is a fixed additive character of $F$ with conductor $\mc O$.

\begin{dfn}
Given a smooth representation $\pi$ of $\GL_N(F)$, a \emph{$P$-Whittaker model} for $\pi$ is an embedding 
\begin{equation} \label{eq: def whittaker model}
\pi \into W_P := \{f \in C^\infty(\GL_N(F),\BC) : f(ug) = \psi_P(u) f(g) \, \forall u \in U\},
\end{equation}
where $W_P$ is a $\GL_N(F)$-representation under right-translation. The $W_P$ for different choices of $\psi_F$ and indexings of the $d_i$ can be identified. When $P \neq [N]$, a $P$-Whittaker model for $\pi$ is said to be \emph{degenerate}. A representation is \emph{generic} if it admits an $[N]$-Whittaker model, which we simply call a \emph{Whittaker model}. 
\end{dfn}

\begin{thm}[{\cite[\S II.2]{MW87} using \cite{Zel80}}]\label{thm:degenwhittaker}
Let $\pi$ be a smooth irreducible representation of $\GL_N(F)$ and let $P(\pi)$ be as in Definition \ref{def: partition}. Then $\pi$ admits a unique~$\hat P(\pi)$-Whittaker model up to scalar. Furthermore, $\pi$ admits no $Q$-Whittaker model for $Q$ that doesn't refine $\hat P(\pi)$. 
\end{thm}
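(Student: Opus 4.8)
The plan is to reduce the statement about $\hat P(\pi)$-Whittaker models to the structure of the Zelevinsky classification, since $P(\pi)$ is by definition built from the Zelevinsky multisegment $M(\pi)$, and then invoke the degenerate-Whittaker-model theory of \cite{MW87} applied to the Speh building blocks. Concretely, first I would recall (from \cite{Zel80}) that $Z(M(\pi))$ embeds into a product $Z(m_1) \times \cdots \times Z(m_r)$ of Speh representations $Z(\langle a_i, b_i\rangle_{\rho_i}) = |\cdot|^{(a_i+b_i)/2}\Speh(b_i - a_i + 1, \rho_i)$, where each $Z(m_i)$ is the Langlands quotient of a segment in the ``other'' classification — equivalently $Z(m_i) = \check{L(m_i)}$ is the Aubert-Zelevinsky dual of a generalized Steinberg. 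The key point is that a generalized Steinberg $\St(b-a+1,\rho)$ with $\rho$ supercuspidal of $\GL_M$ is generic, hence has an $[M(b-a+1)]$-Whittaker model (the top one), and so its Aubert-Zelevinsky dual $\Speh(b-a+1,\rho)$ has, by the compatibility of the involution with wavefront sets / degenerate Whittaker functionals, exactly the ``transposed'' degenerate Whittaker model, namely for the partition $[ (b-a+1)^{(M)} ]$ — which is precisely $\hat P$ of the single-segment multisegment. This is the content that \cite{MW87} extracts from \cite{Zel80}.

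The second step is to combine the building blocks. If $\pi = Z(M)$ with $M = [\langle a_i, b_i\rangle_{\rho_i}]_i$ and $\rho_i$ supercuspidal of $\GL_{N_i}$, then $P(M) = [(a_i - b_i + 1)^{(N_i)}]_i$ (note: with the convention in Definition \ref{def: partition}, the parts are $a_i - b_i + 1$; I would be careful that $b_i - a_i \geq 0$ so these are genuine positive integers read correctly), and $\hat P(M)$ is its transpose. One shows the $\hat P(M)$-Whittaker functional on $\pi$ exists by: (a) taking the tensor product of the $\hat P(m_i)$-Whittaker functionals on the $Z(m_i)$, (b) checking that the induced functional on $Z(m_1) \times \cdots \times Z(m_r)$ realizes the $\hat P(M)$-Whittaker model — this uses the standard ``geometric lemma''/Bruhat-filtration computation of the twisted Jacquet module of a parabolic induction with respect to the degenerate unipotent character $\psi_{\hat P(M)}$, together with the observation that the relevant partition $\hat P(M)$ is obtained by ``merging columns'' of the $\hat P(m_i)$, and (c) restricting to the subrepresentation $\pi = Z(M) \hookrightarrow Z(m_1)\times\cdots\times Z(m_r)$ and verifying non-vanishing of the restricted functional — this is where ordering the $m_i$ and the irreducibility/socle statements in the Zelevinsky theorem are used, exactly as in \cite{MW87}. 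Uniqueness up to scalar follows because the $\psi_{\hat P}$-twisted Jacquet module is one-dimensional, which again is reduced to the supercuspidal case via the geometric lemma.

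For the negative statement — that $\pi$ admits no $Q$-Whittaker model unless $Q$ refines $\hat P(\pi)$ — I would argue via the local character expansion and the wavefront-set interpretation. Recall (from \cite{MW87}, the material just above Theorem \ref{thm:wavefront}, and \cite{Var14} in residue characteristic $2$) that $c_O(\pi) \neq 0$ for $O$ in the wavefront set is computed by dimensions of degenerate Whittaker models, and that for $\GL_N$ the wavefront set of $Z(M)$ is the single orbit $O_{\hat P(M)}$ (this is \cite{MW87}'s main theorem, the ``Theorem \ref{thm:wavefront}'' referenced in the text). Existence of a $Q$-Whittaker model forces a nonzero degenerate Whittaker functional attached to the partition $Q$, hence forces the orbit $O_Q$ to be $\leq$ (in closure order) the wavefront set orbit $O_{\hat P(\pi)}$; under the Jacobson-Morozov parameterization this is exactly the condition that $Q$ refines $\hat P(\pi)$. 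The main obstacle I anticipate is step (b)–(c) above: controlling the twisted Jacquet module of the induced representation and proving the restricted functional to the Zelevinsky subrepresentation is nonzero. This is a genuine (if classical) computation with Bruhat cells and is really the heart of \cite{MW87}; everything else is bookkeeping with the combinatorics of transposed partitions and Aubert-Zelevinsky duality (Theorem \ref{thm:dualsegment}).
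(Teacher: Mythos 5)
The paper does not actually prove this statement: its ``proof'' is a one-sentence remark that \cite[\S II.2]{MW87} merely assembles the relevant parts of \cite{Zel80} and that the argument is independent of the residue characteristic. Your attempt to reconstruct the underlying argument is therefore more ambitious than what the paper does, and its overall shape (reduce to the Speh building blocks $Z(m_i)$, combine via the geometric lemma, defer the hard twisted-Jacquet-module computation to \cite{MW87}/\cite{Zel80}) is reasonable. Your observation about the sign convention in Definition \ref{def: partition} ($a_i-b_i+1$ versus $b_i-a_i+1$) is also a correct catch.

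However, two of your steps are circular relative to the logical order of both the paper and \cite{MW87}. First, you obtain the degenerate Whittaker model of $\Speh(k,\rho)$ from genericity of $\St(k,\rho)$ via ``compatibility of the Aubert--Zelevinsky involution with wavefront sets / degenerate Whittaker functionals.'' That compatibility is not an available input: for $\GL_N$ it is a \emph{consequence} of computing the degenerate Whittaker models on both sides (i.e.\ of the very statement being proved) together with the M{\oe}glin--Waldspurger duality algorithm, and for general groups it is a difficult open problem. The self-contained route is Zelevinsky's derivative theory: the highest derivative of $Z(\langle a,b\rangle_\rho)$ is $Z(\langle a,b-1\rangle_\rho)$, and iterating this computes both the maximal partition $Q$ for which a $Q$-Whittaker model exists and its multiplicity one. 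Second, your proof of the negative statement invokes Theorem \ref{thm:wavefront} (the wavefront set computation), but in the paper Theorem \ref{thm:wavefront} is \emph{deduced from} Theorem \ref{thm:degenwhittaker} combined with the main (general-group) theorem of \cite{MW87}; the determination of which degenerate Whittaker models exist is the input to the wavefront computation, not the other way around. The non-existence for non-refining $Q$ must likewise be extracted from the derivative/Bruhat-filtration analysis in \cite{Zel80}, which is exactly the step you flag as ``the heart of \cite{MW87}'' and do not carry out.
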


\begin{proof}
Note that \cite[\S II.2]{MW87} simply summarizes the parts of \cite{Zel80} explaining which degenerate Whittaker models $\pi$ admits and that this argument works in all residue characteristics. 
\end{proof}

\subsubsection{Restrictions of Whittaker Functions}
%We collect some well-known facts about degenerate Whittaker models in a Lemma: which will be crucial in our main argument. 
We recall a property of degenerate Whittaker models of Speh representations:

\begin{lem}\label{lem:restriction}
Let $\pi$ be a Speh representation with degenerate $\hat P(\pi)$-Whittaker model $\mc W(\pi)$. Consider the subgroup $\GL_{N-1}(F) \subset \GL_N(F)$ embedded in the upper-left corner. Then the map 
\[ 
\mc W(\pi) \to C^\infty(\GL_{N-1}(F)) 
\] 
given by restriction of functions is injective.
\end{lem}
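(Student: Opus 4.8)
The plan is to reduce the statement to the injectivity of restriction to the mirabolic subgroup $P_N\subset\GL_N(F)$ (the stabilizer of the row vector $(0,\dotsc,0,1)$), and then to deduce that injectivity from the irreducibility of $\pi|_{P_N}$. For the reduction, let $V\cong F^{N-1}$ be the unipotent radical of $P_N$ (the ``last column''); then $V\subset U$ and $P_N=\GL_{N-1}(F)\ltimes V$, so every $p\in P_N$ factors as $p=v\gamma$ with $v\in V$ and $\gamma\in\GL_{N-1}(F)$. Since each $f\in\mc W(\pi)$ satisfies $f(ug)=\psi_{\hat P(\pi)}(u)f(g)$ for $u\in U$, we get $f(p)=\psi_{\hat P(\pi)}(v)f(\gamma)$, so $f|_{\GL_{N-1}(F)}=0$ forces $f|_{P_N}=0$. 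Hence it suffices to show that $f\mapsto f|_{P_N}$ is injective on $\mc W(\pi)$.

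For that, note the Whittaker realization $\pi\xrightarrow{\sim}\mc W(\pi)$ is $\GL_N(F)$-equivariant, so restriction $\mc W(\pi)\to C^\infty(P_N)$, $f\mapsto f|_{P_N}$, is a $P_N$-equivariant map out of $\pi|_{P_N}$. If $\pi|_{P_N}$ is irreducible, its kernel is either $0$ or all of $\mc W(\pi)$; the latter would give $f(1)=0$ for all $f\in\mc W(\pi)$, i.e.\ the nonzero degenerate Whittaker functional would vanish — impossible. So the kernel is $0$, which is exactly the assertion.

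It remains to prove $\pi|_{P_N}$ is irreducible for $\pi=\Speh(n,\rho)=Z(\langle a,b\rangle_\rho)$ with $\rho$ supercuspidal of $\GL_d(F)$ and $N=nd$; this is where the Speh hypothesis is genuinely used. By Bernstein--Zelevinsky, $\pi|_{P_N}$ has a finite filtration with graded pieces $(\Phi^+)^{k-1}\Psi^+(\pi^{(k)})$ for $1\le k\le N$, where $\pi^{(k)}$ is the $k$-th derivative. Each $\pi^{(k)}$ is a subquotient of a Jacquet module of $\pi$, hence vanishes unless the cuspidal support of $\pi$ (all twists of $\rho$, so all on $\GL_d$) admits a generic $\GL_k$-quotient, which forces $d\mid k$; and for $k=jd$ the relevant Jacquet module of the single-segment representation $Z(\langle a,b\rangle_\rho)$ is, by Zelevinsky's formula, isomorphic up to twists to $Z(\langle a,b-j\rangle_\rho)\boxtimes Z(\langle b-j+1,b\rangle_\rho)\cong\Speh(n-j,\cdot)\boxtimes\Speh(j,\cdot)$, whose $\GL_{jd}$-factor $\Speh(j,\cdot)$ is generic only when $j=1$. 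Thus $\pi^{(k)}=0$ for all $k\notin\{0,d\}$, and $\pi^{(d)}$ is a twist of $\Speh(n-1,\rho)$, which is irreducible; so the filtration collapses to $\pi|_{P_N}=(\Phi^+)^{d-1}\Psi^+(\pi^{(d)})$, irreducible since $\Psi^+$ and $\Phi^+$ preserve irreducibility. (For $n=1$ this is the classical fact that a supercuspidal restricts to the irreducible $P_N$-representation $\operatorname{ind}_U^{P_N}\psi_{\hat P(\pi)}$; one could alternatively cite the computation of the derivatives of ladder representations in \cite{LM14}.)

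The real work is this last step: the vanishing of the intermediate and higher derivatives of $\pi$, equivalently the irreducibility of $\pi|_{P_N}$. This is precisely where it matters that $\pi$ is attached to a single segment — hence has very degenerate Jacquet modules — since for a general irreducible representation the restriction to $P_N$ of a degenerate Whittaker model need not be injective, and the formal argument of the first two paragraphs would break down.
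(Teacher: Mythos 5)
Your proof is correct and follows the same route as the paper: reduce to the mirabolic subgroup $P_N$ using the $U$-equivariance of Whittaker functions, then conclude from the irreducibility of $\pi|_{P_N}$ that the kernel of restriction must vanish. The only difference is that where the paper simply cites \cite[Rmk 3.6]{Zel80} for the irreducibility of $\pi|_{P_N}$, you supply a proof via the Bernstein--Zelevinsky filtration and the vanishing of all but the $d$-th derivative of $Z(\langle a,b\rangle_\rho)$ --- a correct derivation of the cited input.
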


\begin{proof}
Let $P_N \subset \GL_N(F)$ be the mirabolic subgroup, defined by the property that the bottom row of $p \in P_N$ is $(0, \dotsc, 0 ,1)$. Observe that 
\begin{equation} \label{eq pn decomp}
   P_N = U\cdot \GL_{N-1}. 
\end{equation}

Let $f \in \mc W(\pi)$. It follows  directly from \eqref{eq pn decomp} and the equivariance property \eqref{eq: def whittaker model} that if $f$ vanishes identically on $\GL_{N-1}(F)$, then it also does so on $P_N$.

This reduces the Lemma to showing injectivity of the restriction map
\[ 
\iota: \mc W(\pi) \to C^\infty(P_N). 
\] 
By Remark 3.6 in \cite{Zel80},  the restriction of representations $\pi|_{P_N}$ is irreducible, a property which extends to its Whittaker model 
$\mc W(\pi) \simeq \pi$. Then since $\ker \iota$ is a $P_N$-subrepresentation of $\mc W(\pi)$, injectivity is equivalent to $\ker \iota \neq \mc W(\pi)$. 

To show this, let $f \in \mc W(\pi)$ be a non-zero function and let $f(g) \neq 0$. Let $a \in \GL_N$ be such that $ga^{-1} \in P_N$. Then $(af)(ga^{-1}) \neq 0$ so $af \not\in \ker \iota$ and $\ker \iota \neq \mc W(\pi)$.
\end{proof}

%Finally, because of the existence of degenerate Kirillov models, we can restrict this degenerate Whittaker model to a smaller block: \red{find the right citation}
%\begin{thm}\label{lem:restriction}
%Let $\pi$ be a smooth admissible irreducible representation of $\GL_N(F)$ and $W_\pi$ its $\hat P(\pi)$-Whittaker model. Then restriction from functions on $\GL_N$ to functions on the upper-left $\GL_{N-1}$ is injective on $\mc W(\pi)$ . 
%\end{thm}

\subsection{Wavefront Sets} 
\begin{dfn}
The wavefront set $WF(\pi)$ of a smooth irreducible representation $\pi$ of $\GL_N(F)$ is the set of maximal elements of $\{O \in \mc N(N) : c_O(\pi) \neq 0\}$ under the closure ordering; equivalently, it is the set of maximal elements of $\{P \in \mc P(N) : c_{O_P}(\pi) \neq 0\}$ under the refinement ordering. 
\end{dfn}

Moeglin--Waldspurger compute $WF(\pi)$ from the Zelevinsky classification by relating them to the degenerate Whittaker models that $\pi$ admits:

\begin{thm}[{\cite{MW87}, \cite{Var14}}]\label{thm:wavefront}
Let $\pi$ be a smooth irreducible representation of $\GL_N(F)$. Then 
\[ 
WF(\pi) = \left\{\wh{ P(\pi)}\right\}. 
\] 
\end{thm}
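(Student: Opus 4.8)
The plan is to obtain the statement by combining two inputs: the general Moeglin--Waldspurger dictionary relating the wavefront set of a representation to its degenerate Whittaker models, and the explicit computation of which such models $\pi$ admits, already recorded in Theorem~\ref{thm:degenwhittaker}. In residual characteristic $2$ the single characteristic-dependent step of \cite{MW87} is supplied instead by \cite{Var14}; nothing else changes. I will use throughout the standard identification of the closure order on $\mc N(N)$ with the refinement order on $\mc P(N)$, so that $O_{Q_1}\subseteq\overline{O_{Q_2}}$ iff $Q_1$ refines $Q_2$.

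First I would record the general input in the precise form needed. For $\pi$ smooth irreducible admissible and a partition $Q$: (i) if $O_Q\in WF(\pi)$, then $\pi$ admits a $Q$-Whittaker model, and in fact $c_{O_Q}(\pi)$ equals the dimension of the space of such models; and (ii) conversely, if $\pi$ admits a $Q$-Whittaker model, then $O_Q\subseteq\overline{O'}$ for some $O'\in WF(\pi)$. These are the two halves of the Moeglin--Waldspurger theorem specialized to $\GL_N$; the specialization --- matching the abstract $\mathfrak{sl}_2$-triple construction of degenerate Whittaker functionals to the combinatorial characters $\psi_Q$ of Section~\ref{sec:background} --- is exactly the content of \cite[\S II.2]{MW87} together with \cite{Zel80}, and the dimension statement in (i) is the one quoted in Section~\ref{sec HCH intro}. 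Equivalently, (i) and (ii) say that the $\preceq$-maximal partitions $Q$ for which $\pi$ has a $Q$-Whittaker model are exactly those with $O_Q\in WF(\pi)$.

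Next I would feed in Theorem~\ref{thm:degenwhittaker}: $\pi$ admits a $\wh{P(\pi)}$-Whittaker model, and it admits no $Q$-Whittaker model for any $Q$ that does not refine $\wh{P(\pi)}$ --- that is, $\wh{P(\pi)}$ is the unique $\preceq$-maximal element of $\{Q : \pi \text{ has a } Q\text{-Whittaker model}\}$. The conclusion is then a short squeeze. By (ii) applied to $Q=\wh{P(\pi)}$ there is $O'=O_{Q'}\in WF(\pi)$ with $\wh{P(\pi)}$ refining $Q'$; by (i), $\pi$ then has a $Q'$-Whittaker model, so Theorem~\ref{thm:degenwhittaker} forces $Q'$ to refine $\wh{P(\pi)}$ as well, hence $Q'=\wh{P(\pi)}$ and $O_{\wh{P(\pi)}}\in WF(\pi)$. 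Applying the same two steps to an arbitrary $O''=O_{Q''}\in WF(\pi)$ shows $Q''$ refines $\wh{P(\pi)}$; since $WF(\pi)$ is an antichain already containing $O_{\wh{P(\pi)}}$, this forces $O''=O_{\wh{P(\pi)}}$. Hence $WF(\pi)=\{O_{\wh{P(\pi)}}\}=\{\wh{P(\pi)}\}$.

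The squeeze itself is formal; the point I expect to require genuine care is bookkeeping of conventions, namely checking that $\psi_Q$ of Section~\ref{sec:background} is attached to the orbit $O_Q$ rather than to its transpose $O_{\wh Q}$ in the Moeglin--Waldspurger dictionary, so that no spurious dualization is introduced between the ``Whittaker side'' and the ``wavefront side''. I would verify this on the two extremes: $Q=[N]$ recovers the ordinary nondegenerate Whittaker model, attached to the regular orbit $O_{[N]}$, consistent with $WF(\pi)=\{O_{[N]}\}$ for generic $\pi$ (which have $P(\pi)=[1^{(N)}]$, so $\wh{P(\pi)}=[N]$); and $Q=[1^{(N)}]$ gives the trivial character, carried by every representation, consistent with $WF(\mathbf 1)=\{O_{[1^{(N)}]}\}$ for the trivial representation (which is $Z$ of a single length-$N$ segment, so $P(\mathbf 1)=[N]$ and $\wh{P(\mathbf 1)}=[1^{(N)}]$). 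Once the conventions are fixed the argument uses nothing beyond Theorems~\ref{thm:HCH} and~\ref{thm:degenwhittaker} and the cited results of \cite{MW87} and \cite{Var14}.
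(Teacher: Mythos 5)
Your argument is correct and is exactly the paper's proof, which simply cites Theorem~\ref{thm:degenwhittaker} together with the main result of \cite{MW87} (and \cite{Var14} for $p=2$); you have merely written out the formal squeeze combining the two inputs, and your convention check (that $\psi_Q$ is attached to $O_Q$, verified on the generic and trivial extremes) is the right sanity check and comes out consistently.
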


\begin{proof}
This follows from Theorem \ref{thm:degenwhittaker} together with the main result of \cite{MW87} (resp. \cite{Var14} in residual characteristic $2$). 
\end{proof}

Together with \eqref{eq orbitdimension}, we can then compute GK-dimensions:

\begin{cor}\label{cor:gkdimform}
If $\pi$ is a smooth irreducible representation of $\GL_N(F)$ with $M(\pi) = [\langle a_i,b_i \rangle_{\rho_i}]_{i=1}^r$ for $\rho_i$ on $\GL_{N_i}(F)$, then
\[
d_{GK}(\pi) = \f12 \lf(N^2 - \sum_{i=1}^r N_i (b_i - a_i + 1)^2 \ri). 
\]
\end{cor}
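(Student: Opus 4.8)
The plan is to combine Theorem \ref{thm:wavefront}, which identifies $WF(\pi) = \{\widehat{P(\pi)}\}$, with the dimension formula \eqref{eq orbitdimension} for nilpotent orbits. By definition, $d_{GK}(\pi) = \frac12 \max\{\dim O : c_O(\pi) \neq 0\}$; since the wavefront set consists of the maximal orbits with nonzero coefficient, and dimension is monotone under the closure ordering, the maximum is achieved on $WF(\pi)$. Because Theorem \ref{thm:wavefront} tells us $WF(\pi)$ is the single orbit $O_{\widehat{P(\pi)}}$, we get $d_{GK}(\pi) = \frac12 \dim O_{\widehat{P(\pi)}}$.

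Next I would unwind the dimension formula. By Definition \ref{def: partition}, the Zelevinsky partition is $P(\pi) = P(M) = [(b_1 - a_1 + 1)^{(N_1)}, \dots, (b_r - a_r + 1)^{(N_r)}]$ (taking the convention that $b_i \geq a_i$, so the parts are $b_i - a_i + 1$ each with multiplicity $N_i$). The formula \eqref{eq orbitdimension} says that for a partition $Q$ with dual $\hat Q = [\hat N_1, \dots, \hat N_s]$, one has $\dim O_Q = N^2 - \sum_j \hat N_j^2$. Applying this with $Q = \widehat{P(\pi)}$ and using that $\widehat{\widehat{P(\pi)}} = P(\pi)$ (the dual partition is an involution), the dual of $\widehat{P(\pi)}$ is just $P(\pi)$ itself, whose parts — listed with multiplicity — are exactly the values $b_i - a_i + 1$ repeated $N_i$ times. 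Hence
\[
\dim O_{\widehat{P(\pi)}} = N^2 - \sum_{i=1}^r N_i (b_i - a_i + 1)^2,
\]
and dividing by $2$ gives the claimed formula for $d_{GK}(\pi)$.

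I do not anticipate a serious obstacle here: the statement is essentially a bookkeeping composition of results already stated in the excerpt. The one subtlety worth stating carefully is the involutivity of partition duality together with the correct reading of formula \eqref{eq orbitdimension} — namely that the quantity subtracted off is the sum of squares of the parts of the \emph{dual} of the orbit's partition, so that when the orbit's partition is itself $\widehat{P(\pi)}$, what gets squared and summed are the parts of $P(\pi)$, which are read directly off the multisegment via Definition \ref{def: partition}. A secondary point is to confirm that $\max\{\dim O : c_O(\pi) \neq 0\}$ is attained on the wavefront set rather than merely bounded by it; this is immediate from monotonicity of $O \mapsto \dim O$ under the closure (refinement) ordering, since any $O$ with $c_O(\pi) \neq 0$ lies in the closure of some maximal such orbit.
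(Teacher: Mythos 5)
Your proposal is correct and follows the same route the paper takes (the paper simply states that the corollary follows from Theorem \ref{thm:wavefront} together with \eqref{eq orbitdimension}, leaving the bookkeeping implicit). Your explicit treatment of the involutivity of partition duality and of why the maximum defining $d_{GK}$ is attained on the wavefront set correctly fills in exactly the details the paper omits.
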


\begin{ex}\label{ex: Speh invariants}
If $d \mid N$, the Speh representation %$\pi = L(\langle \frac{1-d}{2}, \frac{d-1}{2} \rangle_\rho)$  
$\Sp(d,\rho) = Z(\langle \frac{1-d}{2}, \frac{d-1}{2} \rangle_\rho)$ has wavefront set and GK-dimension \[
WF(\Sp(d,\rho)) =O_{[(N/d)^{(d)}]}, \quad d_{GK}(\Sp(d,\rho)) = 1/2N(N-d).
\]  
\end{ex}
%$WF(\pi) = O_{[(N/d)^{(d)}]}$
%on $\GL_N(F)$ corresponding to the segment has wavefront set  and GK-dimension $1/2N(N-d)$.

\section{Fixed Vector Bounds}\label{sec:fixedvector}

\subsection{Supercuspidals and Inductions}
We first recall a uniform bound for fixed vectors in supercuspidal representations proved in \cite{MS19}:
\begin{lem}[{\cite[Lem A.1]{MS19}}]\label{lem:supercuspidal bound}
Fix $N$ and $\eps > 0$. Then there is a uniform constant $C_\eps = C_{\eps,N}$ such that if $\pi$ is a supercuspidal of $\GL_N(F)$, then 
\begin{equation*} \label{eq: uniform upper bound sc}
\dim(\pi^{K_\ell}) \leq C_\epsilon q^{\ell \left(\frac{N(N-1)}{2} + \eps \right)}. 
\end{equation*}
\end{lem}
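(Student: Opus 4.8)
The plan is to pass to the Kirillov model and count there. Supercuspidal representations of $\GL_N(F)$ are generic (their Zelevinsky partition is $[1^{(N)}]$, whose dual is $[N]$, so this is a special case of Theorem~\ref{thm:wavefront}), hence $\pi$ has a Whittaker model $\mc W(\pi)$ for the standard generic character $\psi_{[N]}$. For supercuspidal $\pi$ the restriction to the mirabolic $P_N$ is irreducible and, by Bernstein--Zelevinsky, isomorphic to the compact induction $\Ind_U^{P_N}\psi_{[N]}$ (functions compactly supported modulo $U$); restricting further to the corner $\GL_{N-1}(F)$ via $P_N=U\cdot\GL_{N-1}$ gives an isomorphism of $\pi\cong\mc W(\pi)$ onto the Kirillov model $C_c^\infty(U_{N-1}\backslash\GL_{N-1}(F),\psi_{[N-1]})$ of smooth functions on $\GL_{N-1}(F)$ that are left $(U_{N-1},\psi_{[N-1]})$-equivariant and compactly supported modulo $U_{N-1}$, with $\GL_{N-1}(F)$ acting by right translation. (Injectivity of restriction to the corner is the argument of Lemma~\ref{lem:restriction}.) Thus $\pi^{K_\ell}$ is a subspace $V_\ell$ of this Kirillov model, and we may assume $\ell\geq\ell(\pi)$, else $V_\ell=0$.

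Next I would extract support constraints from $K_\ell$-invariance. For $\ell\geq 1$ one has the triangular factorization $K_\ell=\bar U(\varpi^\ell\mc O)\cdot A(1+\varpi^\ell\mc O)\cdot U(\varpi^\ell\mc O)$, so a $K_\ell$-fixed vector is precisely one fixed by each factor. In the Iwasawa decomposition $\GL_{N-1}(F)=U_{N-1}A_{N-1}\GL_{N-1}(\mc O)$, invariance under $U(\varpi^\ell\mc O)$ translates---using that $\psi_F$ has conductor $\mc O$---into a cone bound: $W\in V_\ell$ vanishes on $U_{N-1}a\GL_{N-1}(\mc O)$ unless $v(\alpha_i(a))\geq-\ell$ for every simple root $\alpha_i$ of $\GL_{N-1}$. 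Invariance under $\bar U(\varpi^\ell\mc O)=w_0U(\varpi^\ell\mc O)w_0^{-1}$, with $w_0$ the long Weyl element, gives the same cone bound for $\pi(w_0)W$. Now comes the key input: for supercuspidal $\pi$ the operator $\pi(w_0)$ on the Kirillov model is controlled by the local functional equation, and---uniformly in $\pi$---it does not translate torus-support by more than $O_N(\mathfrak f(\pi))$, where the conductor $\mathfrak f(\pi)=O_N(\ell)$ because $\pi^{K_\ell}\neq 0$. Combining this with the two cone bounds and the compactness of the support confines the torus-support of $W$ to a box of side $O_N(\ell)$ in the cocharacter lattice $\Z^{N-1}$ of $A_{N-1}$; in particular $W$ is nonzero on $U_{N-1}a\GL_{N-1}(\mc O)$ for at most $O_N(\ell^{N-1})$ classes $a\in A_{N-1}/(A_{N-1}\cap\GL_{N-1}(\mc O))$.

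The dimension count then finishes the proof. By Iwasawa, $W\in V_\ell$ is determined by its restrictions to the cosets $a\GL_{N-1}(\mc O)$, of which at most $O_N(\ell^{N-1})$ are nonzero. Each restriction $g_0\mapsto W(ag_0)$ is invariant under right translation by $\GL_{N-1}(\mc O)\cap K_\ell$ and transforms on the left under $U_{N-1}(\mc O)$ by the character $u\mapsto\psi_{[N-1]}(aua^{-1})$, which the cone bound forces to be trivial on $U_{N-1}(\varpi^\ell\mc O)$; hence it descends to $\GL_{N-1}(\mc O/\varpi^\ell)$ and lies in the induction from $U_{N-1}(\mc O/\varpi^\ell)$ of a character, a space of dimension $[\GL_{N-1}(\mc O/\varpi^\ell):U_{N-1}(\mc O/\varpi^\ell)]\asymp q^{\ell N(N-1)/2}$. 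Therefore $\dim\pi^{K_\ell}=\dim V_\ell\leq O_N(\ell^{N-1})\cdot O_N(q^{\ell N(N-1)/2})\leq C_\eps\,q^{\ell(N(N-1)/2+\eps)}$, since any polynomial in $\ell$ is $\leq C_\eps q^{\eps\ell}$.

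The formal steps are the reduction to the Kirillov model and the elementary cone bounds; the crux---and the main obstacle---is the uniform control of $\pi(w_0)$ in the second paragraph, equivalently the statement that matrix coefficients of $K_\ell$-fixed vectors of a supercuspidal have support of ``torus-size'' $O_N(\ell)$, uniformly in $\pi$. This is a bound on supports of supercuspidal matrix coefficients of the kind conjectured in \cite[\S 4]{FLM11}; for $\GL_N$ it is available---for instance from the explicit Bushnell--Kutzko construction of supercuspidal types, or from the known local functional equations together with $\gamma$-factor estimates---which is what lets the argument run in this uniform form.
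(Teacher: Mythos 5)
Your proposal is correct and follows essentially the same route as the paper's (sketched) proof: pass to the Whittaker/Kirillov model, use injectivity of restriction to the corner $\GL_{N-1}$ (Lemma~\ref{lem:restriction}), invoke a uniform bound on the support of $K_\ell$-fixed Whittaker functions of supercuspidals, and then count. The only difference is bookkeeping: the paper outsources the crucial uniform support bound to the main result of \cite{Lap19}, whereas you rederive it from cone bounds plus uniform control of $\pi(w_0)$ via conductor/$\gamma$-factor estimates --- which you correctly identify as the crux and which is exactly the content of the cited result.
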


\begin{proof}
Sketching the argument in \cite{MS19}, the main result of \cite{Lap19} gives that for any supercuspidal $\pi$, all functions in $\mc W(\pi)^{K_\ell}$ are supported on some uniform set $S_\ell$ depending only on $\ell$. Then, Lemma \ref{lem:restriction} bounds the dimension of $\pi^{K_\ell}$ by the dimension of the space of $K_\ell$-invariant Whittaker functions supported on $\GL_{N-1} \cap S_\ell$ which can be explicitly computed. 
\end{proof}
Note that the exponent $N(N-1)/2$ is the GK-dimension of the supercuspidal (and hence generic) representation $\pi$. 

For later use, we also recall a bound on fixed vectors in parabolic inductions:
\begin{lem}\label{lem:inductionbound}
For all $N$, there are uniform $C_1, C_2$ such that for all 
\[
\pi = \rho_1 \times \cdots \times \rho_r
\]
with each $\rho_i$ a smooth representation of $\GL_{N_i}(F)$ and $\sum_{i=1}^r N_i = N$:
\[
C_1 \dim(\pi^{K_\ell}) \leq q^{\f12(N^2 - \sum_i N_i^2)} \prod_{i=1}^r \dim(\rho_i^{K_\ell}) \leq C_2 \dim(\pi^{K_\ell}). 
\]
\end{lem}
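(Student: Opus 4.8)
The plan is to reduce the two-sided bound to a computation with the Iwasawa decomposition and the Cartan decomposition, treating the parabolic induction $\mc I_P^{G}$ as a space of functions on $K = \GL_N(\mathcal O)$.

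\medskip

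\textbf{Setup.} Let $P = MU$ be the standard parabolic of $\GL_N$ with Levi $M = \GL_{N_1} \times \cdots \times \GL_{N_r}$, so that $\pi = \mc I_P^G(\rho_1 \boxtimes \cdots \boxtimes \rho_r)$. By the Iwasawa decomposition $G = PK$, restriction of functions gives an isomorphism of the underlying vector space of $\pi$ with $\{f : K \to \rho_1 \boxtimes \cdots \boxtimes \rho_r \mid f(pk) = \delta_P(p)^{1/2}(\rho(p)) f(k)\ \forall p \in P \cap K\}$, i.e. with the space of functions on the compact quotient $(P\cap K)\backslash K$ valued in the Levi representation $\sigma := \rho_1 \boxtimes \cdots \boxtimes \rho_r$. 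Taking $K_\ell$-invariants, and using that $K_\ell$ is normal in $K$ and that $P \cap K$ acts on it, one gets
\[
\pi^{K_\ell} \;\cong\; \bigoplus_{x \in (P \cap K)\backslash K / K_\ell} \sigma^{(P \cap K) \cap xK_\ell x^{-1}} \;=\; \bigoplus_{x} \sigma^{(P\cap K) \cap K_\ell},
\]
the last equality because $K_\ell$ is normal in $K$ so $xK_\ell x^{-1} = K_\ell$. Now $(P \cap K)\cap K_\ell = (M\cap K_\ell)(U \cap K_\ell)$, and $U \cap K_\ell$ acts trivially on $\sigma$ (it maps to the trivial subgroup of $M$), so $\sigma^{(P\cap K)\cap K_\ell} = \sigma^{M \cap K_\ell} = \rho_1^{K_\ell^{N_1}} \otimes \cdots \otimes \rho_r^{K_\ell^{N_r}}$, whose dimension is $\prod_i \dim(\rho_i^{K_\ell})$.

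\medskip

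\textbf{Counting the double cosets.} It therefore remains to show that the number of double cosets $(P\cap K)\backslash K/K_\ell$ is bounded above and below by uniform constants times $q^{\ell(N^2 - \sum_i N_i^2)/2}$. The double coset space $(P \cap K)\backslash K / K_\ell$ is in bijection with $(P \cap K)\backslash K/K_\ell$; since $K/K_\ell = \GL_N(\mathcal O/\varpi^\ell)$ and $P \cap K$ surjects onto the parabolic $P(\mathcal O/\varpi^\ell)$, this is the set of $\mathcal O/\varpi^\ell$-points of the flag variety $P\backslash G$, i.e. the partial flag variety over $\mathcal O/\varpi^\ell$. Its cardinality is exactly $q^{(\ell - 1)\dim(P\backslash G)} \cdot \#(P\backslash G)(k)$, and $\dim(P\backslash G) = \dim U = \tfrac12(N^2 - \sum_i N_i^2)$ while $\#(P\backslash G)(k)$ is a fixed polynomial in $q$ (a product of Gaussian binomials). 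This gives $\#\big((P\cap K)\backslash K/K_\ell\big) \asymp q^{\ell \cdot \frac12(N^2 - \sum_i N_i^2)}$ with the implied constants depending only on $N$ (and $q$, hence on $F$); absorbing the ratio between $q^{(\ell-1)\dim U}\#(P\backslash G)(k)$ and $q^{\ell \dim U}$ into $C_1, C_2$ yields the claim. Combining with the previous paragraph,
\[
\dim(\pi^{K_\ell}) \;=\; \#\big((P\cap K)\backslash K/K_\ell\big)\cdot \prod_{i=1}^r \dim(\rho_i^{K_\ell}) \;\asymp\; q^{\frac12(N^2 - \sum_i N_i^2)\ell}\prod_{i=1}^r \dim(\rho_i^{K_\ell}),
\]
which is the asserted two-sided inequality once the $q^{-\dim U}$-type discrepancy is put into the constants. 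Note the exponent in the statement should be read as $q^{\frac12(N^2 - \sum_i N_i^2)\ell}$; I would make the $\ell$-dependence explicit here, or else note that the statement as written has a typo.

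\medskip

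\textbf{Main obstacle.} The genuinely delicate point is the passage from "$\pi$ as an abstract representation" to "$\pi$ realized as functions on $K$", and in particular checking that taking $K_\ell$-invariants commutes with the orbit decomposition over $(P\cap K)\backslash K$ cleanly — i.e. that one really does get a direct sum indexed by double cosets with each summand equal to $\sigma^{M\cap K_\ell}$ and not something twisted. This works precisely because $K_\ell \triangleleft K$, so conjugation by coset representatives does not move $K_\ell$; I would state this normality carefully and use it twice (once to replace $xK_\ell x^{-1}$ by $K_\ell$, once to see the action of $P\cap K$ on $(P\cap K)\backslash K/K_\ell$ is well-defined). A secondary bookkeeping nuisance is keeping the constants $C_1, C_2$ genuinely independent of $\pi$ and of $\ell$ (they depend only on $N$ and $q$): the only $\pi$-dependence enters through the $\dim(\rho_i^{K_\ell})$ factors, which are exactly the quantities left on the right-hand side, so this is automatic once the double-coset count is pinned down.
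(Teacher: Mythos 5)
Your proof is correct and is essentially the standard Mackey/Iwasawa computation (compact realization of the induced representation, $K_\ell$-invariants indexed by $(P\cap K)\backslash K/K_\ell$ with each summand $\sigma^{M\cap K_\ell}$, plus the count of $(\mc O/\varpi^\ell)$-points of the flag variety) that the paper delegates to its citation of \cite[Lem 5.2]{MS19}. You are also right that the exponent in the statement should carry a factor of $\ell$, as is clear from how the lemma is applied later in the paper.
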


\begin{proof}
This is standard; see e.g. the proof of \cite[Lem 5.2]{MS19}. 
\end{proof}

\subsection{Speh Representations}
We now move to the intermediate and technically most complex step: bounding fixed vectors in Speh representations% as in \eqref{eq: Speh def}
.

\begin{thm}\label{thm:pre speh bound}
Let $N,k > 0$. There is a uniform constant $C := C_{N,k}$ such that for all supercuspidal representations $\rho$ of $\GL_N$ we have
\[
\dim(\Speh(k, \rho)^{K_\ell}) \leq C q^{\f12 \ell k(k-1)N(N-1)} \lf(\dim(\rho^{K_\ell})\ri)^k.
\]
\end{thm}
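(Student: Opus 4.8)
The plan is to prove the bound by induction on $k$, realizing $\Speh(k,\rho)$ (a representation of $\GL_{kN}(F)$) through its degenerate Whittaker model and stripping off one copy of $\rho$ at a time by restricting to the mirabolic subgroup $P_{kN}\subset\GL_{kN}(F)$ and invoking the Bernstein--Zelevinsky derivative calculus. The base case $k=1$ is the identity $\dim(\rho^{K_\ell})\le\dim(\rho^{K_\ell})$, and the degenerate case $N=1$ is immediate because $\Speh(k,\rho)$ is then a character. Throughout I would use the embedding $\Speh(k,\rho)\into\rho_1\times\cdots\times\rho_k$ into the parabolic induction of the supercuspidal twists $\rho_j:=|\cdot|^{\,j-(k+1)/2}\rho$, which on Whittaker models gives $\mc W(\Speh(k,\rho))\into\mc W(\rho_1)\times\cdots\times\mc W(\rho_k)$.

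\textbf{Reduction step.} Writing $\pi=\Speh(k,\rho)$, every $K_\ell$-fixed vector is fixed by $K_\ell\cap P_{kN}$, so $\dim(\pi^{K_\ell})\le\dim\big((\pi|_{P_{kN}})^{K_\ell\cap P_{kN}}\big)$. By Zelevinsky's computation of the derivatives of a segment representation, the only derivative of $\pi$ of positive order that does not vanish is $\pi^{(N)}$, and it is a determinant twist of $\Speh(k-1,\rho)$ on $\GL_{(k-1)N}(F)$. Hence the Bernstein--Zelevinsky filtration of $\pi|_{P_{kN}}$ has a single nonzero layer,
\[
\pi|_{P_{kN}}\ \cong\ (\Phi^+)^{N-1}\,\Psi^+\big(\Speh(k-1,\rho')\big),\qquad \rho'\text{ a twist of }\rho,
\]
an isomorphism which also underlies Lemma~\ref{lem:restriction}. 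The functor $\Psi^+$, being extension by the trivial character along the mirabolic unipotent radical, leaves the dimension of a fixed-vector space unchanged. Each application of $\Phi^+$, passing from $P_{m-1}$ to $P_m$, is a compact induction along $P_{m-1}\backslash\GL_{m-1}\cong F^{m-1}\setminus\{0\}$, so it multiplies the dimension of the $K_\ell$-fixed space by the number of $K_\ell$-orbits on the support of the functions involved; this count is $q^{(m-1)\ell}$ up to a factor polynomial in $\ell$, once one knows that the support occupies $O(\ell)$ ``scales''. Multiplying the $N-1$ factors $q^{(m-1)\ell}$ for $m=(k-1)N+2,\dots,kN$ gives $q^{\frac12 N(N-1)(2k-1)\ell}$, so, granting the support statement,
\[
\dim\big(\Speh(k,\rho)^{K_\ell}\big)\ \le\ C_{N,k}\,\ell^{\,O_{N,k}(1)}\,q^{\frac12 N(N-1)(2k-1)\ell}\,\dim\big(\Speh(k-1,\rho')^{K_\ell}\big).
\]

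\textbf{Closing the induction.} Feeding in the inductive hypothesis and iterating, the powers of $q^{\ell}$ telescope: the coefficients $2k-1,2k-3,\dots,3$ sum to $k^2-1$, and the one-step identity $(2k-1)+(k-1)(k-2)=k(k-1)+1$ is exactly what matches the target exponent $\tfrac12 k(k-1)N(N-1)$ while accumulating the required $k$ factors of $\dim(\rho^{K_\ell})$. The polynomial-in-$\ell$ losses get absorbed into the constant --- this is where comparison with $(\dim\rho^{K_\ell})^{k}$ rather than a single power is used, together with the fact that $\dim(\rho^{K_\ell})$ is of size $q^{\frac12 N(N-1)\ell}$ on the range of $\ell$ where it is nonzero (Lemma~\ref{lem:supercuspidal bound} for the upper side). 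Equivalently, one may first use Lemma~\ref{lem:inductionbound} on $\rho_1\times\cdots\times\rho_k$ to produce the factor $(\dim\rho^{K_\ell})^{k}$ together with the cruder exponent $\tfrac12 kN^2(k-1)$, and then invoke the mirabolic analysis only to sharpen that exponent to $\tfrac12 kN(N-1)(k-1)$.

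\textbf{Main obstacle.} The heart of the matter is the uniform support bound invoked in the reduction step: one must show that the $K_\ell$-fixed vectors in the mirabolic restriction --- equivalently, in the degenerate $\wh{P}(\pi)$-Whittaker model of Theorem~\ref{thm:degenwhittaker} --- of $\Speh(k,\rho)$ are supported in a subset of $\GL_{kN}(F)$ depending only on $N$, $k$, $\ell$, $F$, and not on $\rho$. This is the Speh-representation analogue of Lapid's theorem \cite{Lap19}, which is the only nontrivial input to Lemma~\ref{lem:supercuspidal bound}. I would deduce it from the embedding $\mc W(\Speh(k,\rho))\into\mc W(\rho_1)\times\cdots\times\mc W(\rho_k)$ by applying Lapid's uniform support bound to each supercuspidal factor and tracking how supports propagate through parabolic induction via the Iwasawa decomposition; a secondary bookkeeping point is that conjugating $K_\ell$ by the bounded-complexity elements occurring in this support region changes its level only by $O(1)$, so the orbit-counting for each $\Phi^+$ costs only the claimed polynomial factor.
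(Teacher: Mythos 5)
Your overall architecture (induction on $k$, peeling off one copy of $\rho$ through the mirabolic subgroup and the degenerate Whittaker model) matches the spirit of the paper's proof, and your identification of the Bernstein--Zelevinsky structure $\pi|_{P_{kN}}\cong(\Phi^+)^{N-1}\Psi^+(\Speh(k-1,\rho'))$ is correct --- it is what underlies Lemma \ref{lem:restriction}. But the counting mechanism you build on it has a genuine gap that you have correctly located and not closed. The inequality $\dim(\pi^{K_\ell})\le\dim\bigl((\pi|_{P_{kN}})^{K_\ell\cap P_{kN}}\bigr)$ is vacuous on its own: already for $\Phi^+(\mathbf 1)$, the Kirillov model of the Steinberg representation of $\GL_2(F)$, the space of $(K_\ell\cap P_2)$-fixed vectors is infinite-dimensional, since functions in $C_c^\infty(F^\times)$ may be supported arbitrarily close to $0$; only the full $K_\ell$-invariance (the lower-triangular part) truncates the support from below. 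So your entire estimate rests on the ``main obstacle'': a support bound for $K_\ell$-fixed vectors in the degenerate Whittaker model of $\Speh(k,\rho)$ that is uniform in $\rho$. This is the Speh analogue of Lapid's theorem, and your proposed derivation --- apply \cite{Lap19} to each factor of $\rho_1\times\cdots\times\rho_k$ and ``track supports through parabolic induction'' --- is not a routine deduction: the degenerate Whittaker functional on the induced representation is a Jacquet-type integral over a unipotent radical, and the support of its output is not controlled termwise by the supports of the inputs. The paper avoids needing any support bound on the big group: it restricts Whittaker functions to $\GL_{Nk-1}$, applies the Iwasawa decomposition for the $(N(k-1),N-1)$-parabolic $P$ so that the relevant count is the finite coset space $P(\mc O/\varpi^\ell)\backslash\GL_{Nk-1}(\mc O/\varpi^\ell)$ (no support question arises), and controls the restriction of the functions to $P$ via the containment $\mc W(\pi)|_{\GL_{N(k-1)}\times\GL_N}\subseteq\chi\otimes(\mc W(\pi_0)\boxtimes\mc W(\rho))$ from \cite{LM20}. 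That containment is the key external input your argument is missing.

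Second, even granting the support bound, your telescoping does not yield the stated inequality. Each $\Phi^+$-step absorbs the stripped-off copy of $\rho$ into the orbit count $q^{(m-1)\ell}$ rather than producing a factor of $\dim(\rho^{K_\ell})$, so iterating your one-step bound gives $\dim(\Speh(k,\rho)^{K_\ell})\le Cq^{\frac12(k^2-1)N(N-1)\ell}\dim(\rho^{K_\ell})$, i.e.\ the weaker bound \eqref{eq:alt speh bound} rather than the theorem. To upgrade the exponent $k^2-1$ to $k(k-1)$ while accumulating $k$ factors of $\dim(\rho^{K_\ell})$, you invoke a uniform \emph{lower} bound $\dim(\rho^{K_\ell})\gg q^{\frac12 N(N-1)\ell}$ on the range where this dimension is nonzero; Lemma \ref{lem:supercuspidal bound} supplies only the upper bound, and the lower bound (uniform in $\rho$, in particular at $\ell=\ell(\rho)$) is neither proved by you nor available in the paper. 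The paper's Levi-restriction argument produces the factor $\dim(\rho^{K_\ell})$ directly at each step, via $\dim\Phi=\dim\pi_0^{K_\ell}\dim\rho^{K_\ell}$, and therefore never needs such a lower bound.
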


\begin{proof}
By induction, it suffices to find a uniform $C := C_{N,k}$ such that:
\[
\dim(\Speh(k, \rho)^{K_\ell}) \leq C q^{\ell kN(N-1)} \dim(\Speh(k-1, \rho)^{K_\ell}) \dim(\rho^{K_\ell}). 
\] %using \sum_{i=1}^k(i) = k(k-1)/2 \blue{Why is there a $1/2$ in the exponent?}
Let $\pi = \Speh(k, \rho)$ and $\pi_0 = \Speh(k-1,\rho)$. Define the following subgroups of $\GL_{Nk}$:
\begin{itemize}
    \item $\GL_{Nk-1}$, $\GL_{N(k-1)}$ in the upper-left corner,
    \item $\GL_N$ in the lower-right corner,
    \item $\GL_{N-1}$ in the upper-left corner of $\GL_N$,
    \item $P$ the $(N(k-1), N-1)$-upper parabolic of $\GL_{Nk-1}$ and $V$ its unipotent radical. 
\end{itemize} 
By Theorem \ref{thm:degenwhittaker}, $\pi$ has a unique $[N^{(k)}]$-Whittaker model $\mc W(\pi)$; likewise $\pi_0$ has a unique $[N^{(k-1)}]$-Whittaker model $\mc W(\pi_0)$. Lastly, let $\mc W(\rho)$ be the Whittaker model of $\rho$. 

%By Lemma \ref{lem:restriction}, the restriction map $\mc W(\pi) \to C^\infty(GL_{Nk-1})$ is injective. 
Let $\Theta$ be the space $\mc W(\pi)^{K^{Nk}_\ell}|_{\GL_{Nk-1}}$ of functions on $\GL_{Nk-1}$. By Lemma \ref{lem:restriction}, $\dim \pi^{K_\ell^{Nk}} \leq \dim \Theta$. Our goal is to bound $\dim \Theta$, which we will do by restriction to $P$.

By \cite[(3a)]{LM20}, there is unramified character $\chi$ of $\GL_{N(k-1)} \times \GL_N$ such that restriction of functions gives a containment:
\begin{equation} 
\label{eq: containment of Whittaker models}
\mc W(\pi)|_{\GL_{N(k-1)} \times \GL_N} \subseteq \chi \otimes (\mc W(\pi_0) \boxtimes \mc W(\rho)). 
\end{equation}
Let $\Psi$ be the space $\mc W(\rho)^{K_\ell^N}\mid_{GL_{N-1}}$ given by restricting functions. Let $\Phi$ be the space of functions on $P$ that are left-invariant under $V$ and whose restriction to $\GL_{N(k-1)}\times \GL_{N-1}$ lies in $\chi \otimes \mc W(\pi_0)^{K_\ell^{N(k-1)}}\otimes \Psi$. It follows from \eqref{eq: containment of Whittaker models} that if $f \in \mc W(\pi)^{K_\ell^{Nk}}$ then $f|_P \in \Phi$. 

 If we let $\gamma_i$ be a set of double coset representatives for 
\[ 
(K^{Nk-1} \cap P) \dom K^{Nk-1}/K^{Nk-1}_\ell,  
\]
then from the Iwasawa decomposition for $\GL_{Nk-1}$ we have that 
\[
\GL_{Nk-1} = \bigcup_i P\gamma_iK^{Nk-1}_\ell.
\]
If $f \in \Theta$, then the function $p\mapsto f(p\gamma_i)$ lies in $\Phi$ for all $i$. It follows that 
\[ 
\dim \Theta \leq \#\lf((K^{Nk-1} \cap P)\dom K^{Nk-1}/K^{Nk-1}_\ell \ri) \dim \Phi. 
\] 
Next, 
\[
\#(K^{Nk-1} \cap P)\dom K^{Nk-1}/K^{Nk-1}_\ell = \#P(\mc O/\varpi^\ell) \bs \GL_{Nk-1}(\mc O/\varpi^\ell) \leq C q^{\ell(N-1)N(k-1)}
\]
for some constant $C$, by a dimension count and the formula for the order of a general linear group over a finite field. Computing
\[ 
\dim \Phi = \dim \mc W(\pi_0)^{K^{N(k-1)}_\ell} \dim \Psi = \dim \pi_0^{K^{N(k-1)}_\ell} \dim \rho^{K^N_\ell}
\] 
then completes the proof.
\end{proof}

Substituting in Lemma \ref{lem:supercuspidal bound}:

\begin{cor}\label{cor:speh bound}
Let $N,k > 0$. Then for any $\eps > 0$, there is a uniform constant $C_\eps$ such that for all supercuspidal representations $\rho$ of $\GL_N$, 
\[
\dim(\Speh(k, \rho)^{K_\ell}) \leq C_\eps q^{ \ell (\f12 k^2N(N-1) + \eps)}.
\]
\end{cor}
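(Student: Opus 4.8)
The plan is to deduce Corollary \ref{cor:speh bound} from Theorem \ref{thm:pre speh bound} purely by substitution and bookkeeping. Theorem \ref{thm:pre speh bound} already gives, for a uniform constant $C=C_{N,k}$,
\[
\dim(\Speh(k,\rho)^{K_\ell}) \leq C\, q^{\frac12 \ell k(k-1)N(N-1)} \bigl(\dim(\rho^{K_\ell})\bigr)^k,
\]
so the only remaining input is a bound on $\dim(\rho^{K_\ell})$ for supercuspidal $\rho$ of $\GL_N$, which is precisely Lemma \ref{lem:supercuspidal bound}: for every $\eps' > 0$ there is a uniform $C_{\eps'}=C_{\eps',N}$ with $\dim(\rho^{K_\ell}) \leq C_{\eps'}\, q^{\ell(\frac{N(N-1)}{2}+\eps')}$.

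The execution is then a one-line computation. Given $\eps>0$, I would set $\eps' := \eps/k$ and apply Lemma \ref{lem:supercuspidal bound} with this $\eps'$, raising the resulting bound to the $k$th power:
\[
\bigl(\dim(\rho^{K_\ell})\bigr)^k \leq C_{\eps'}^k\, q^{\ell k(\frac{N(N-1)}{2}+\eps')} = C_{\eps'}^k\, q^{\ell(\frac12 kN(N-1)+\eps)}.
\]
Multiplying by the factor $C q^{\frac12 \ell k(k-1)N(N-1)}$ from Theorem \ref{thm:pre speh bound} and collecting exponents, the total power of $q$ is
\[
\tfrac12\ell k(k-1)N(N-1) + \tfrac12 \ell kN(N-1) + \eps\ell = \tfrac12 \ell k^2 N(N-1) + \eps\ell = \ell\bigl(\tfrac12 k^2 N(N-1) + \eps\bigr),
\]
since $k(k-1)+k = k^2$. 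Taking $C_\eps := C_{N,k}\cdot C_{\eps/k,N}^{\,k}$, which is uniform in $\rho$ (depending only on $N$, $k$, and $\eps$), yields exactly the claimed inequality $\dim(\Speh(k,\rho)^{K_\ell}) \leq C_\eps\, q^{\ell(\frac12 k^2 N(N-1)+\eps)}$.

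There is no real obstacle here: the corollary is a formal consequence of the two cited results, and the only point requiring any care is the arithmetic identity $k(k-1)N(N-1) + kN(N-1) = k^2 N(N-1)$ together with the choice $\eps' = \eps/k$ so that the $k$ copies of the $\eps'$-error from the supercuspidal bound sum to exactly $\eps$. It is worth noting, though, that the exponent $\frac12 k^2 N(N-1)$ is \emph{not} the GK-dimension of $\Speh(k,\rho)$ in general — by Example \ref{ex: Speh invariants} that dimension is $\frac12 kN(kN-k) = \frac12 k^2 N(N-1)$ only because $\Speh(k,\rho)$ lives on $\GL_{kN}$ with Zelevinsky partition $[k^{(N)}]$, so in fact the two coincide; this is the sense in which Corollary \ref{cor:speh bound} is the Speh-representation case of Theorem \ref{thm:mainfixedvector}.
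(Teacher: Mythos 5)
Your proof is correct and is exactly what the paper does: the corollary is stated with the single line ``Substituting in Lemma \ref{lem:supercuspidal bound},'' and your substitution of the supercuspidal bound (with $\eps' = \eps/k$) into Theorem \ref{thm:pre speh bound}, together with the identity $k(k-1)+k=k^2$, is the intended argument. Your closing observation that $\tfrac12 k^2 N(N-1)$ equals $d_{GK}(\Speh(k,\rho))$ as a representation of $\GL_{kN}$ is also accurate and consistent with Example \ref{ex: Speh invariants}.
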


It is also useful to sometimes use the bound
\begin{equation}\label{eq:alt speh bound}
\dim(\Speh(k, \rho)^{K_\ell}) \leq C_\eps q^{\ell (\f12 (k^2-1)N(N-1) + \eps)} \dim(\rho^{K_\ell})    
\end{equation}
in the same notation as Theorem \ref{thm:pre speh bound} and Corollary \ref{cor:speh bound}. 

% \blue{MGG: left this here in case there's something I'm missing, but I think the Corollary below is completely redundant.}

% \begin{cor}
%     If $\sigma$ is a supercuspidal representation of $\GL_t$  and $\pi=\pi(\sigma,k)$ is a Speh representation with $kt=N$ then 
%         \[ \dim(\pi^{K_\ell}) \leq C_\epsilon q^{(\ell + \epsilon)\frac{N(N-t)}{2}}.   
%     \]
% \end{cor}
% \begin{proof}
% Following \cite[Thm. 1.1]{mitra2020note}, the maximal orbit for which $c_O\neq 0$ has dimension $N^2-Nt$.  
% \end{proof}

\subsection{General Representations}\label{ssec:fixedvectorbound}
Combining the Speh bound of Corollary \ref{cor:speh bound} with the Zelevinsky classification gives the general bound:

\begin{cor}\label{cor:bodyfixedvector}
Let $N > 0$ and fix $N_i, k_i$ for $1 \leq i \leq r$ such that $\sum_i N_i k_i = N$. Then for any $\eps > 0$, there is a uniform constant $C_\eps$ such that for all smooth admissible irreducible representations $\pi$ with $M(\pi) = (\langle a_i, b_i \rangle_{ \rho_i})_{i=1}^r$ with $b_i - a_i + 1 = k_i$ and $\rho_i$ a supercuspidal representation of $\GL_{N_i}(F)$, 
\[
\dim(\pi^{K_\ell}) \leq C_\eps q^{\f12  \ell (N^2 - \sum_{i=1}^r N_i k_i^2 + \eps) }.
\]
\end{cor}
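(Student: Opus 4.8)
The plan is to combine the Zelevinsky classification, the Speh bound of Corollary \ref{cor:speh bound}, and the parabolic induction estimate of Lemma \ref{lem:inductionbound}. Given $\pi$ with $M(\pi) = (\langle a_i, b_i \rangle_{\rho_i})_{i=1}^r$, the Zelevinsky theorem says $\pi$ embeds as the unique irreducible subrepresentation of $Z(m_1) \times \cdots \times Z(m_r)$, where $Z(m_i) = |\cdot|^{(a_i+b_i)/2}\Speh(k_i, \rho_i)$ is a determinant twist of a Speh representation. Since $\dim(\pi^{K_\ell}) \leq \dim\bigl((Z(m_1) \times \cdots \times Z(m_r))^{K_\ell}\bigr)$ and the determinant twists do not affect the dimension of the space of $K_\ell$-fixed vectors (they only modify the action of the center, which lies in $K_\ell$ for $\ell$ large... more carefully, twisting by an unramified character commutes with restriction to $K_\ell$ since $K_\ell \subseteq \ker|\cdot|$), it suffices to bound the right-hand side.

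First I would apply Lemma \ref{lem:inductionbound} to the induction $Z(m_1) \times \cdots \times Z(m_r)$, where each $Z(m_i)$ is a representation of $\GL_{N_i k_i}(F)$. This gives
\[
\dim\bigl((Z(m_1) \times \cdots \times Z(m_r))^{K_\ell}\bigr) \leq C_2^{-1} q^{\frac12 (N^2 - \sum_i (N_i k_i)^2)} \prod_{i=1}^r \dim(Z(m_i)^{K_\ell}).
\]
Next I would substitute the Speh bound from Corollary \ref{cor:speh bound}, which gives $\dim(Z(m_i)^{K_\ell}) \leq C_{\eps,i} q^{\ell(\frac12 k_i^2 N_i(N_i-1) + \eps')}$ for any $\eps' > 0$. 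Multiplying these together, the product of the Speh exponents contributes $\frac12 \ell \sum_i k_i^2 N_i(N_i - 1) = \frac12 \ell \bigl(\sum_i N_i^2 k_i^2 - \sum_i N_i k_i^2\bigr)$. Adding the induction exponent $\frac12(N^2 - \sum_i N_i^2 k_i^2)$, the terms $\sum_i N_i^2 k_i^2$ cancel, leaving exactly $\frac12\bigl(N^2 - \sum_i N_i k_i^2\bigr)$ as the coefficient of $\ell$ (absorbing the induction exponent, which is independent of $\ell$, and all the finitely many constants $C_2, C_{\eps,i}$ into a single $C_\eps$, and choosing $\eps' = \eps/r$ so that $r\eps' = \eps$).

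The only subtlety I anticipate is bookkeeping: making sure the determinant twists genuinely drop out (they do, because $K_\ell$ acts trivially through any unramified character for every $\ell \geq 1$), that Lemma \ref{lem:inductionbound} is applied with the correct block sizes $N_i k_i$ rather than $N_i$, and that the number $r$ of factors—which is bounded in terms of $N$—lets us collapse the $\eps'$-losses into a single $\eps$. There is no real obstacle here; the content of the corollary is entirely in Theorem \ref{thm:pre speh bound} and its consequence Corollary \ref{cor:speh bound}, and this final step is just assembling the pieces via the Zelevinsky classification. One should also note that $d_{GK}(\pi) = \frac12(N^2 - \sum_i N_i k_i^2)$ by Corollary \ref{cor:gkdimform}, so this is precisely the uniform form of \eqref{eq:GKexpansion} claimed in Theorem \ref{thm:mainfixedvector}.
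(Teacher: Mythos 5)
Your proposal is correct and follows the paper's own argument essentially verbatim: embed $\pi$ into the product of twisted Speh representations via the Zelevinsky classification, apply Lemma \ref{lem:inductionbound} with block sizes $N_ik_i$, and substitute Corollary \ref{cor:speh bound}, with the exponents combining exactly as you compute. The bookkeeping points you flag (unramified twists not affecting $K_\ell$-fixed vectors, collapsing the finitely many $\eps'$-losses) are handled the same way in the paper.
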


\begin{proof}
Let $x_i = (a_i + b_i)/2$. Then $\pi$ is a subrepresentation of the parabolic induction
\[
\sigma := |\cdot|^{x_i} \Speh(k_i, \rho_i) \times \cdots \times |\cdot|^{x_r} \Speh(k_r, \rho_r).
\]
Therefore, for some constant $C$,
\[
\dim(\pi^{K_\ell}) \leq \dim(\sigma^{K_\ell}) \leq Cq^{\f12 (N^2 - \sum_{i=1}^r N_i^2 k_i^2)} \prod_{i=1}^r \dim((|\cdot|^{x_i} \Speh(k_i, \rho_i))^{K_\ell}) 
\]
by Lemma \ref{lem:inductionbound}. Finally, by Corollary \ref{cor:speh bound}, there is $C_\eps$ such that
\begin{equation}\label{eq:in gen bound}
\dim((|\cdot|^{x_i} \Speh(k_i, \rho_i))^{K_\ell}) = \dim(\Speh(k_i, \rho_i)^{K_\ell}) \leq C_\eps q^{ \f12 \ell(N_i^2 k_i^2 - N_i k_i^2 + \eps)},
\end{equation}
from which the result follows. 
\end{proof}

In the notation of Corollary \ref{cor:bodyfixedvector}, Corollary \ref{cor:gkdimform} gives
\[
d_{GK}(\pi) = N^2 - \sum_{i=1}^r N_i k_i^2
\]
which depends only on the $k_i$ and $N_i$ and is exactly the exponent on $q^\ell$ in Corollary \ref{cor:bodyfixedvector}. Maximizing constants over the finitely many choices of $N_i, k_i$ gives Theorem \ref{thm:mainfixedvector}. 

If we input \eqref{eq:alt speh bound} or Theorem \ref{thm:pre speh bound} instead of Corollary \ref{cor:speh bound} in the intermediate equation \eqref{eq:in gen bound}, we get alternate uniform bounds
\begin{align}\label{eq:alt gen bound}
\dim(\pi^{K_\ell}) &\leq C_\eps q^{\ell \lf(d_{GK}(\pi) - \sum_{i=1}^r d_{GK}(\rho_i) + \eps \ri)} \prod_{i=1}^r \dim(\rho_i^{K_\ell}) \\
\dim(\pi^{K_\ell}) &\leq C_\eps q^{\ell \lf(d_{GK}(\pi) - \sum_{i=1}^r k_i d_{GK}(\rho_i) + \eps \ri)} \prod_{i=1}^r \lf(\dim(\rho_i^{K_\ell})\ri)^{k_i} \label{eq:alt gen bound 2}
\end{align}
in the same notation as Corollary \ref{cor:bodyfixedvector}.

\section{Homogeneity and Harish-Chandra-Howe Coefficients}\label{sec:HCH}
We introduce a notion of scaling families of functions that will allow us to translate the upper bounds on fixed vectors into bounds on the Harish-Chandra-Howe coefficients of a representation. We use the notation of Section \ref{sec HCH intro}.
\begin{dfn}
Let $f$ be a function on $\mf g$. For each $\ell \in \Z$, define
\begin{equation} \label{eq family of functions}
f^{(\ell)}(X) := q^{\ell \dim \mf g}f(\varpi^{-\ell} X).
\end{equation}
For $\ell$ large enough such that the exponential map is a bijection from $\varpi^{\ell} \Supp(f)$, also define
\[
\varphi^{(\ell)}(\exp X) = f^{(\ell)}(X). 
\]
We call such sequences $f^{(\ell)}$ on $\mf g$ or $\varphi^{(\ell)}$ on $G$ \emph{scaling families}. 
\end{dfn}

\begin{ex}\label{ex:scalingindicators}
The notion of scaling family is a direct generalization of the prototypical example \[f^{(\ell)} = \mu(L_{\ell})^{-1}\mathbf{1}_{L_{\ell}},\] for $L_\ell = \varpi^\ell M_n(\mc O)$.  These functions satisfy the defining property \eqref{eq family of functions} for $\ell \geq 1$. The corresponding family on $G$ is $\varphi^{(\ell)} = \bar{\mathbf{1}}_{K_\ell}$, the normalized indicator functions of the subgroups $K_{\ell}$. 
\end{ex}

\subsection{Homogeneity bounds from Character Expansions}

The next proposition is a variant for scaling families of a result that is certainly known to the experts in the case of $\varphi^{(\ell)} = \bar{\1}_{K_\ell}$, in which case it implies the identity 
\[
\dim(\pi^{K_\ell}) \asymp q^{\ell d_{GK}(\pi)}.
\]

\begin{lem}\label{lem:homogenity}
Let $f^{(\ell)}$ be a scaling family. For any $O \in \mc N$ we have %, there is a constant $a_{O,f}$ such that 
\[ 
\hat \mu_O(f^{(\ell)}) = \hat{\mu}_O(f)q^{\ell \cdot \dim O/2}.
\]
\end{lem}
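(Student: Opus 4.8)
The plan is to exploit the two homogeneity properties that characterize the orbital integral $\mu_O$ and its Fourier transform: the measure $\mu_O$ is homogeneous of degree $\dim O$ under dilation (since $O$ is a single orbit and nilpotent cones are conical), while the Fourier transform interchanges dilation by $\varpi^{-\ell}$ with dilation by $\varpi^{\ell}$, up to the Jacobian factor $q^{\ell \dim \mf g}$ built into the definition \eqref{eq family of functions}. Concretely, first I would unwind the pairing: by definition $\hat\mu_O(f^{(\ell)}) = \mu_O(\widehat{f^{(\ell)}})$, so the first step is to compute $\widehat{f^{(\ell)}}$ in terms of $\hat f$. Using the (additively normalized, self-dual with respect to $\psi_F$) Fourier transform on $\mf g$, a change of variables shows $\widehat{f^{(\ell)}}(Y) = \hat f(\varpi^{\ell} Y)$: the factor $q^{\ell \dim \mf g}$ in \eqref{eq family of functions} is exactly what cancels the Jacobian of $X \mapsto \varpi^{-\ell} X$, leaving a clean dilation on the Fourier side.

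Next I would use the scaling behavior of the orbital integral measure itself. For $t \in F^\times$, the map $Y \mapsto tY$ sends the orbit $O$ to itself (nilpotent orbits under the adjoint action of $\GL_N$ are stable under scaling), and the invariant measure $\mu_O$ scales by $|t|^{\dim O}$, i.e. $\mu_O(h(t \cdot)) = |t|^{-\dim O}\mu_O(h)$ for $h \in C_c^\infty(\mf g)$. Taking $t = \varpi^{\ell}$, so $|t| = q^{-\ell}$, gives $\mu_O\bigl(\hat f(\varpi^\ell \cdot)\bigr) = q^{\ell \dim O}\,\mu_O(\hat f)$. Wait — I need to be careful about the exponent: the relevant scaling is on the function $Y \mapsto \hat f(\varpi^\ell Y)$, which is $\hat f$ precomposed with multiplication by $\varpi^\ell$, and the pushforward/pullback conventions must be tracked so that the net power of $q^\ell$ comes out as $\dim O / 2$ rather than $\dim O$. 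The resolution is that the normalization of $\mu_O$ in Theorem \ref{thm:HCH} is the one making the HCH expansion valid, and with that normalization the homogeneity degree of $\hat\mu_O$ (as opposed to $\mu_O$) is $\dim O/2$; equivalently, one uses that $\mu_O$ has homogeneity degree $\dim O$ and the Fourier transform on $\mf g$ (which has dimension $\dim \mf g$, with $\dim O$ always even) converts this to degree $\dim\mf g - \dim O$ for $\hat\mu_O$, and the bookkeeping with the $q^{\ell\dim\mf g}$ normalization in \eqref{eq family of functions} produces the stated $q^{\ell \dim O/2}$. I would carry out this computation explicitly with all conventions pinned down.

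Assembling the two steps: $\hat\mu_O(f^{(\ell)}) = \mu_O(\widehat{f^{(\ell)}}) = \mu_O\bigl(\hat f(\varpi^\ell \cdot)\bigr) = q^{\ell \dim O/2}\,\mu_O(\hat f) = q^{\ell \dim O/2}\,\hat\mu_O(f)$, which is the claim. For the statement about $\varphi^{(\ell)}$ on $G$, one notes that for $\ell$ large the exponential map identifies a neighborhood of the identity with a lattice neighborhood of $0$ and is measure-preserving for the chosen normalizations, so $\hat\mu_O(\varphi^{(\ell)}) = \hat\mu_O(f^{(\ell)})$ and the same identity holds; combined with the HCH expansion applied to $\varphi^{(\ell)} = \bar{\1}_{K_\ell}$ this recovers \eqref{eq:GKexpansion}, which I would include as the promised reconstruction of the well-known argument.

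\textbf{Main obstacle.} The genuine difficulty is purely one of normalization and sign/exponent bookkeeping: matching the measure-theoretic normalization of $\mu_O$ implicit in Theorem \ref{thm:HCH}, the self-dual normalization of the Fourier transform on $\mf g$, the Jacobian factor $q^{\ell \dim \mf g}$ in \eqref{eq family of functions}, and the Haar measure normalization on $\mf g$, so that the final exponent is exactly $\dim O / 2$. The homogeneity of orbital integrals under dilation is classical (it is the statement that Shalika germs are homogeneous), and the Fourier-dilation identity is elementary; the only thing requiring care is to verify that the conventions chosen in this paper are internally consistent and yield the half-dimension. I would handle this by fixing once and for all that $\mu_O$ is homogeneous of degree $-\dim O$ as a distribution under the substitution $X \mapsto \varpi^\ell X$ (equivalently, $\mu_O$ is a positive measure supported on the cone $\bar O$ of dimension $\dim O$), and then the Plancherel/self-duality of the Fourier transform distributes the $\dim \mf g$ evenly, leaving the asserted $q^{\ell\dim O/2}$.
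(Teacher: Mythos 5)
Your overall architecture is the same as the paper's: compute $\widehat{f^{(\ell)}}(Y) = \hat f(\varpi^{\ell}Y)$ from the definition (the $q^{\ell\dim\mf g}$ factor cancelling the Jacobian, exactly as you say), then apply the homogeneity of the nilpotent orbital integral to $\hat f$ with $t = \varpi^{\ell}$. The first step is correct and matches the paper.

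The gap is in the homogeneity input, and your proposed fixes do not close it. You assert that $\mu_O$ scales by $|t|^{\dim O}$ ("it is a positive measure supported on the cone $\bar O$ of dimension $\dim O$"), notice this gives the wrong exponent, and then try to recover the factor of $1/2$ by (a) declaring that the HCH normalization makes $\hat\mu_O$ homogeneous of degree $\dim O/2$, which just restates the conclusion, and (b) claiming that "Plancherel/self-duality distributes the $\dim\mf g$ evenly." Neither works: the $q^{\ell\dim\mf g}$ normalization is entirely consumed in the identity $\widehat{f^{(\ell)}} = \hat f(\varpi^{\ell}\,\cdot\,)$, so once you are computing $\mu_O\bigl(\hat f(\varpi^{\ell}\,\cdot\,)\bigr)$ there is no remaining bookkeeping, and degree-$\dim O$ homogeneity would yield $q^{\ell\dim O}$, contradicting the lemma. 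The correct input, which the paper cites from \cite[\S 3.1]{HC99admissible}, is that the canonically normalized nilpotent orbital integral itself satisfies $\mu_O(\phi(t\,\cdot\,)) = |t|^{-\dim O/2}\mu_O(\phi)$: the half-dimensional exponent is intrinsic to $\mu_O$ because the invariant measure on the orbit is the Liouville measure built from the $(\dim O/2)$-th exterior power of the Kostant--Kirillov symplectic form, each factor of which scales linearly in $t$. Your heuristic "measure on a $\dim O$-dimensional cone, hence degree $\dim O$" is precisely the wrong one; with the correct homogeneity statement in hand, your assembled chain of equalities goes through verbatim and no Fourier-side adjustment is needed.
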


\begin{proof}
%We use the defining property \eqref{eq family of functions} of the functions $f^{(\ell)}$.
This follows directly from the homogeneity of the distributions~$\mu_{O}$ established in \cite{HC99admissible}.  Let $\phi \in C^\infty_c(\fg)$ and recall that by definition $\hat{\mu}(\phi) := \mu(\hat{\phi})$.
The Fourier transform on $\fg$ is defined as \[ \hat{\phi}(Y) = \int_\fg \phi(X) \psi(B(X,Y)) dX,  \] where $\psi$ is a fixed character of $F$ and $B$ is a nondegenerate symmetric invariant bilinear form.  The usual property of the Fourier transform holds in our setup: \begin{equation} \label{eq Fourier scaling}
    \widehat{
\phi(aX)} = \frac{1}{|a|^{\dim \fg}}\hat{\phi}(X/a).
\end{equation}  % Indeed: 
We apply this to $f^{(\ell)}(X) = q^{\ell\cdot \dim \fg} f(\varpi^{-\ell}X)$ and get $\wh{f^{(\ell)}}(X) = \hat{f}(\varpi^{\ell}X)$.

The result of \cite[\S 3.1]{HC99admissible} shows that, given a family of functions $\phi_t(X) := \phi(tX)$, the nilpotent orbital integrals satisfy 
\[ 
\mu_{ O}(\phi_t) %= \mu_{t  O}(\phi) %=  \rho(t^{-1}) \mu_{ O}(\phi) 
= |t^{-1}|^{\dim O/2}\mu_{ O}(\phi).
\] 
By applying this to $\phi = \hat f$ and $t=\varpi^\ell$, we see that 
\[
\mu_{ O}(\wh{f^{(\ell)}}) = |\varpi^{-\ell}|^{\dim  O/2}\mu_ O(\hat f) = q^{\ell \cdot \dim O/2}\mu_{O}( \hat f).
\] 
It folows that for each orbit $O$ we have 
\begin{equation}\label{eq:homogeneity}
\hat{\mu}_O(f^{(\ell)}) = \hat{\mu}_O(f) q^{\ell \cdot \dim O/2},
\end{equation} 
as claimed.
\end{proof}

\begin{rmk}
Using the Harish-Chandra--Howe expansion and applying Lemma~\ref{lem:homogenity}, the scaling family $\varphi^{(\ell)} = \bar \1_{K_\ell}$ of Example \ref{ex:scalingindicators} gives that for each smooth irreducible representation $\pi$ of $\GL_N(F)$:
\[
\tr_\pi(\bar \1_{K_\ell}) = \dim(\pi^{K_\ell}) \asymp q^{\ell d_{GK}(\pi)},
\]
where the implied constants in the $\asymp$ are non-uniform depending on $\pi$. Here we used that~$\wh \mu_O(\bar \1_{K_\ell}) > 0$ since $\bar \1_{K_\ell}$ has positive Fourier transform.  

A similar asymptotic with non-uniform constants holds for arbitrary $|\tr_\pi(\varphi^{(\ell)})|$ provided that $\sum_{O \in WF(\pi)} \wh \mu_O(f) \neq 0$, where $\varphi^{(\ell)}$ is the scaling family associated to $f$ (the sum is a singleton in our $\GL_N$ case). %If we change this to be more general, then let's add that you might need a bit more. 
\end{rmk}

\subsection{Homegeneity bounds from Fixed Vectors}
The fixed vector bound of Theorem \ref{thm:mainfixedvector} can be upgraded to a bound on traces against scaling families. In contrast with the above remark, note that our goal here is uniform constants. 
\begin{prop}\label{prop:scalingtrace}
%Let $\varphi^{(\ell)}$ be a scaling family defined from $f : \mf g \to \C$. Then for each $d \in D_N$ and all $\eps > 0$, there is a uniform constant $C_{\eps, f} := C_{\eps, N, f, F}$ such that for any smooth irreducible representation $\pi$ of $\GL_N(F)$ with $d_{GK}(\pi) = d$ and large enough $\ell$, we have
Let $\varphi^{(\ell)}$ be a scaling family defined from $f : \mf g \to \C$. %Fix~$d\in D_N$ and l
Let~$\pi$ be a smooth irreducible representation of $\GL_N(F)$. Then for any $\epsilon>0$, there is a constant $C_{\eps, f} := C_{\eps, N, f, F}$ independent of $\pi$ such that for large enough $\ell$, we have
\[
|\tr_\pi (\varphi^{(\ell)})| \leq C_{\eps, f} q^{\ell (d_{GK}(\pi) + \eps)}. 
\]
\end{prop}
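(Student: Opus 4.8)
The plan is to reduce the claimed trace bound to the fixed-vector bound of Theorem~\ref{thm:mainfixedvector} by approximating the scaling family $\varphi^{(\ell)}$ by linear combinations of the normalized indicator functions $\bar\1_{K_m}$ for $m$ comparable to $\ell$. First I would fix $f \in C_c^\infty(\mf g)$ and recall that, since $f$ is locally constant with compact support, there is a fixed integer $a = a(f)$ such that $f$ is supported on $\varpi^{-a} M_N(\mc O)$ and invariant under translation by $\varpi^{a} M_N(\mc O)$. Consequently $f^{(\ell)}$ is supported on $\varpi^{\ell - a}M_N(\mc O)$ and is invariant under translation by $\varpi^{\ell + a}M_N(\mc O)$; transporting via $\exp$ (valid once $\ell$ is large enough that $\exp$ is a bijection with the required equivariance on the relevant lattices), the function $\varphi^{(\ell)}$ on $G$ is bi-invariant under $K_{\ell + a}$ and supported on finitely many cosets of $K_{\ell + a}$ inside a fixed compact set scaled down by $\varpi^{\ell}$.

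Next I would expand $\varphi^{(\ell)}$ in the basis of indicator functions of $K_{\ell+a}$-double cosets: write $\varphi^{(\ell)} = \sum_{j} c_j^{(\ell)} \bar\1_{g_j K_{\ell+a}}$ where the sum runs over a set of representatives $g_j$ of the (finitely many) cosets on which $\varphi^{(\ell)}$ is supported, and the coefficients $c_j^{(\ell)}$ are bounded in absolute value by a constant depending only on $f$ (indeed by $\sup|f|$ times the Haar measure normalization, which is uniform). The number of such cosets is $O_f(q^{\ell \cdot \dim\mf g \cdot(\text{something})})$ a priori, but crucially each double coset $g_j K_{\ell+a}$ contributes $\tr_\pi(\bar\1_{g_j K_{\ell+a}})$, and by Cauchy--Schwarz (or simply by $|\tr_\pi(\bar\1_{gK_m})| \le \dim(\pi^{K_m})$, which follows because $\bar\1_{gK_m}$ acts on $\pi$ with image inside $\pi^{K_m}$ after suitable averaging, and its operator norm in the relevant sense is controlled) one gets $|\tr_\pi(g_j \cdot \bar\1_{K_{\ell+a}})| \le \dim(\pi^{K_{\ell+a}})$. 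Then I would bound
\[
|\tr_\pi(\varphi^{(\ell)})| \le \sum_j |c_j^{(\ell)}|\, \dim(\pi^{K_{\ell+a}}) \le C_f \cdot \#\{j\} \cdot \dim(\pi^{K_{\ell+a}}).
\]
Here the key point is that the number of relevant cosets $\#\{j\}$ is bounded \emph{independently of $\ell$}: $\varphi^{(\ell)}$ is supported on $\exp(\varpi^{\ell-a}M_N(\mc O))$, which is contained in $K_{\ell - a}$, and $K_{\ell-a}/K_{\ell+a}$ has order $q^{2a \dim\mf g}$, a constant depending only on $f$ and $N$ (not on $\ell$ or $\pi$). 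So $\#\{j\} = O_f(1)$.

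Applying Theorem~\ref{thm:mainfixedvector} with $K_{\ell+a}$ gives $\dim(\pi^{K_{\ell+a}}) \le C_\eps q^{(\ell+a)(d_{GK}(\pi)+\eps)} \le C_{\eps,f} q^{\ell(d_{GK}(\pi)+\eps)}$, absorbing the $q^{a(d_{GK}(\pi)+\eps)}$ factor into the constant using that $d_{GK}(\pi) \le N(N-1)/2$ is uniformly bounded. Combining, $|\tr_\pi(\varphi^{(\ell)})| \le C_{\eps,f} q^{\ell(d_{GK}(\pi)+\eps)}$, as desired. The main obstacle is the bookkeeping around the exponential map: one must be careful that for $\ell$ large enough $\exp$ carries the lattice $\varpi^{\ell-a}M_N(\mc O)$ bijectively onto $K_{\ell-a}$ (or a group squeezed between nearby congruence subgroups) compatibly with the Haar measures, so that the combinatorial coset count on the Lie algebra side transfers faithfully to the group side; this is where the hypothesis ``$\ell$ large enough'' is used, and it is routine but must be stated. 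A minor secondary point is justifying $|\tr_\pi(\bar\1_{gK_m})| \le \dim \pi^{K_m}$ uniformly, which follows from the standard fact that convolution by $\bar\1_{gK_m}$ on $\pi$ factors through the finite-dimensional space $\pi^{K_m}$ and is a rank-$\le\dim\pi^{K_m}$ operator of norm $\le 1$ with respect to an invariant inner product (or, for non-unitary $\pi$, one can reduce to the unitary/tempered constituents or argue directly with the Howe expansion); in any case this costs only a constant.
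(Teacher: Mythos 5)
Your proposal is correct and takes essentially the same route as the paper: both decompose $\varphi^{(\ell)}$ into an $\ell$-independent number of normalized indicators of cosets $gK_{\ell+r}$ with coefficients bounded in terms of $f$ alone, bound each $|\tr_\pi(\bar\1_{gK_{\ell+r}})|$ by $\dim \pi^{K_{\ell+r}}$ using that the relevant $g$ lie in a compact subgroup and hence act unitarily on the finite-dimensional space $\pi^{K_{\ell+r}}$ for $\ell$ past a threshold depending only on $f$, and then conclude by Theorem \ref{thm:mainfixedvector}. The only presentational difference is that the paper performs the decomposition once on the Lie algebra, writing $f = \sum_i c_i \bar\1_{a_i + L_r}$ and scaling (so the term count is visibly $\ell$-independent), whereas you decompose $\varphi^{(\ell)}$ at each level and recover the uniform count from the index $[K_{\ell-a}:K_{\ell+a}]$.
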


\begin{proof}
Recall $L_\ell = \varpi^\ell M_n(\mc O)$. Since the $L_\ell$ form a basis at the identity for the topology on $\mf g$, there exists~$r$ such that for some fixed constants $c_i \in \C$, and $a_i \in \mf g$:
\[
f = \sum_i c_i \bar \1_{a_i+L_r} \implies f^{(\ell)} = \sum_i c_i \bar \1_{\varpi^\ell a_i + L_{r + \ell}}.
\]
Since for large $\ell$, $\exp(L_\ell) = K_\ell$, the Zassenhaus formula gives that $\exp(\varpi^\ell a_i + L_{r + \ell}) = \exp(\varpi^\ell a_i) K_{r+\ell}$ for $\ell$ larger than some threshold $\ell_0$ depending only on $a_i,N,F$. In total, for $\ell \geq \ell_0$
\[
\varphi^{(\ell)} = \sum_i c_i \bar \1_{\exp(\varpi^\ell a_i) K_{r+\ell}}. 
\]
Next, $\bar \1_{\exp(\varpi^\ell a_i) K_{r+\ell}}$ acts on $\pi$ by projecting to $\pi^{K_{r + \ell}}$ and then acting by $\exp(\varpi^\ell a_i)$. We can without loss of generality choose $\ell_0$ so that all the $\exp(\varpi^\ell a_i)$ are also contained in some compact subgroup and therefore act unitarily with respect to some Hermitian form. This gives the bound
\[
|\tr_\pi (\bar \1_{\exp(\varpi^\ell a_i) K_{r+\ell}})| \leq \dim \pi^{K_{r+\ell}}. 
\]
In total
\[
|\tr_\pi(\varphi^{(\ell)})| \leq \sum_i |c_i| |\tr_\pi (\bar \1_{\exp(\varpi^\ell a_i) K_{r+\ell}}) | \leq C_f \dim \pi^{K_{r+\ell}}
\]
for some constant $C_f$ depending only on $f$. The result then follows by Theorem \ref{thm:mainfixedvector}.
\end{proof}

\subsection{Harish-Chandra-Howe Coefficients}
Proposition \ref{prop:scalingtrace} and the homogeneity result of Lemma \ref{lem:homogenity} let us give a uniform bound on the coefficients $c_{O}(\pi)$ in terms of the level of $\pi$ and its GK-dimension. 

%(411) and (33) orbits have same dimension in \GL_6

\begin{cor}\label{cor:maincoefficientsalt}
%Fix $d \in D_N$ and 
Let $\eps > 0$. Then there is a uniform constant $C_\eps := C_{\eps, N, F}$ such that for all smooth irreducible representations $\pi$ of $\GL_N(F)$ and all $O \in \mc N(N)$, we have
\[
|c_O(\pi)| \leq C_\eps q^{\ell(\pi)(d_{GK}(\pi) - 1/2\dim O + \eps)}.
\]
\end{cor}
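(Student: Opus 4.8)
The plan is to combine the Harish-Chandra--Howe expansion of Theorem~\ref{thm:HCH} with the scaling families of the previous subsection, using the homogeneity of Lemma~\ref{lem:homogenity} to isolate a single coefficient $c_O(\pi)$ and Proposition~\ref{prop:scalingtrace} to bound it. The key observation is that if $\varphi^{(\ell)}$ is a scaling family built from $f\in C_c^\infty(\mf g)$, then for $\ell$ large enough the local character expansion gives
\[
\tr_\pi(\varphi^{(\ell)}) = \sum_{O\in\mc N} c_O(\pi)\,\wh\mu_O(f^{(\ell)}) = \sum_{O\in\mc N} c_O(\pi)\,\wh\mu_O(f)\,q^{\ell\cdot\dim O/2},
\]
where the second equality is exactly \eqref{eq:homogeneity}. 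So the traces against a scaling family encode all the $c_O(\pi)$, weighted by powers of $q^\ell$ with exponents $\dim O/2$ that are \emph{distinct} for orbits of different dimension.

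The first step is to choose, for a fixed orbit $O_0$, a test function $f$ such that $\wh\mu_{O_0}(f)\neq 0$; one can arrange $\wh\mu_O(f)=\delta_{O,O_0}$ along a set of representatives of each possible dimension, or more simply pick $f$ with $\wh\mu_{O_0}(f)\ne0$ and then extract $c_{O_0}(\pi)$ from the system of equations indexed by $\ell$. Since $D_N$ is finite, running $\ell$ over finitely many values $\ell(\pi),\ell(\pi)+1,\dots,\ell(\pi)+|D_N|-1$ produces a Vandermonde-type linear system in the unknowns $\big(c_O(\pi)\wh\mu_O(f)\big)$ grouped by dimension; inverting it expresses $c_{O_0}(\pi)\wh\mu_{O_0}(f)$ as a fixed linear combination (with coefficients depending only on $q$, $N$, $F$, not on $\pi$) of the traces $\tr_\pi(\varphi^{(\ell(\pi)+j)})q^{-(\ell(\pi)+j)\dim O_0/2}$. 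One must be slightly careful that the expansion \eqref{eq:HCH} is only valid for $\exp X$ with $X$ small, i.e. $\ell$ beyond some threshold $\ell_0(f)$; but $\ell_0$ depends only on $f$ (hence on $N,F$), not on $\pi$, so this is harmless --- except one wants the bound in terms of $\ell(\pi)$, which may be smaller than $\ell_0$. This is handled by noting that if $\ell(\pi)<\ell_0$ then $q^{\ell(\pi)(\cdots)}$ and $q^{\ell_0(\cdots)}$ differ by a bounded factor, so one may as well run $\ell$ over $\max(\ell(\pi),\ell_0)+j$ and absorb the discrepancy into $C_\eps$.

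The second step is to feed Proposition~\ref{prop:scalingtrace} into this: for each $j$ we have $|\tr_\pi(\varphi^{(\ell(\pi)+j)})|\le C_{\eps,f}\,q^{(\ell(\pi)+j)(d_{GK}(\pi)+\eps)}$, so after multiplying by $q^{-(\ell(\pi)+j)\dim O_0/2}$ and summing finitely many terms,
\[
|c_{O_0}(\pi)| \le C'_\eps\, q^{\ell(\pi)(d_{GK}(\pi)-\tfrac12\dim O_0+\eps)},
\]
which is exactly the claimed bound; the constant $C'_\eps$ depends on $\eps,N,F$ and on the fixed choices of $f$ and the Vandermonde inverse, but not on $\pi$ or $O_0$ (there being finitely many orbits). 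Maximizing over the finitely many $O_0\in\mc N(N)$ gives the uniform constant.

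I expect the main obstacle to be the bookkeeping around the domain of validity of the character expansion and the passage from $\ell_0$ to $\ell(\pi)$: one needs the exponent in the final bound to genuinely be $\ell(\pi)$ (not $\ell_0$ or $\ell(\pi)+|D_N|$), and this requires observing that shifting $\ell$ by a bounded amount only changes the estimate by a bounded multiplicative constant, which is fine since the target is stated up to the uniform constant $C_\eps$. A secondary point requiring care is ensuring the Vandermonde system is genuinely invertible --- this is where finiteness of $D_N$ and distinctness of the exponents $\dim O/2$ (equivalently, grouping orbits by dimension and only tracking the sum $\sum_{\dim O = d} c_O(\pi)\wh\mu_O(f)$ for each $d$, which suffices since we then choose $f$ to see the top such term, or iterate downward) is used. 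Everything else is routine.
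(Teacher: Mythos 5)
Your overall strategy (test the character expansion against a scaling family, use the homogeneity of Lemma~\ref{lem:homogenity} to isolate $c_O(\pi)$, and bound the trace by Proposition~\ref{prop:scalingtrace}) is the same as the paper's, and the Vandermonde extraction is actually unnecessary: since the $\wh\mu_O$ are linearly independent, one can choose $f_O$ with $\wh\mu_{O'}(f_O)=\1_{O=O'}$ (this is \cite[\S6.4]{Kot05} after Fourier transform), so a single trace at a single scale already isolates $c_O(\pi)$.

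There is, however, a genuine gap in your treatment of the domain of validity of the expansion. You assert that the local character expansion, tested against $f^{(\ell)}$, is valid for all $\ell$ beyond a threshold $\ell_0(f)$ \emph{depending only on $f$ (hence on $N,F$), not on $\pi$}. This is false: in Theorem~\ref{thm:HCH} the phrase ``for small enough $X$'' hides a neighborhood of the identity that shrinks as the depth of $\pi$ grows, so for $\pi$ of large level the expansion is simply not valid at any fixed scale $\ell_0$, and your whole extraction scheme at scales $\ell_0+j$ (or $\max(\ell(\pi),\ell_0)+j$, as justified by a uniform $\ell_0$) is unsupported. The statement you actually need --- that the expansion holds when tested against functions supported at scale $\ell(\pi)+1$, i.e.\ that the region of validity is quantitatively controlled by the depth of $\pi$ --- is precisely DeBacker's homogeneity theorem, the main result of \cite{DB02}, which the paper invokes at exactly this point. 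It happens that the scales $\ell(\pi)+j$ you end up using do lie in the valid range by DeBacker's theorem, but your justification for validity is wrong and the correct one is a substantial external input that must be cited; without it the argument does not close. The remaining bookkeeping (shifting $\ell$ by a bounded amount costs only a bounded factor, handling small $\ell(\pi)$ by adjusting the constant, maximizing over the finitely many orbits) is fine.
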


\begin{proof}
There is a function $f_O$ on $\mf g$ such that for all $O' \in \mc N(N)$, $\wh \mu_{O'}(f_O) = \1_{O = O'}$ (see \cite[\S6.4]{Kot05} for the corresponding function for $\mu_O$ and take Fourier transforms). Define the corresponding scaling family $\varphi_O^{(\ell)}$ for all $\ell \geq \ell_0$, where $\ell_0$ is the threshold appearing in the proof of Proposition \ref{prop:scalingtrace} and depends only on $f_{O}$. 

By Proposition \ref{prop:scalingtrace}, for all $\eps> 0$, there is $C_{\eps, O}$ such that for any $\pi$ and any $\ell > \ell_0$
\[
|\tr_\pi(\varphi_O^{(\ell)})| \leq C_{\eps,O} q^{\ell (d_{GK}(\pi) + \eps)}. 
\]
On the other hand, the main result of \cite{DB02} gives that for $\ell(\pi) \geq \ell_0$, 
\[
\tr_\pi(\varphi_O^{(\ell(\pi) + 1)}) = \sum_{O' \in \mc N(N)} c_{O'}(\pi) \wh \mu_{O'}(f^{(\ell(\pi) + 1)}_O) = c_O(\pi) q^{1/2 (\ell(\pi) + 1) \dim O},
\]
where the last equality follows from the homogeneity property of Lemma \ref{lem:homogenity} and $\wh \mu_O(f_O) = 1$. 

Combining the two equations and maximizing the constants over $O$ we have 
\[
|c_O(\pi)| \leq q^{d_{GK}(\pi) - 1/2\dim O} C_\eps q^{\ell(\pi)(d_{GK}(\pi) - 1/2\dim O + \eps)} 
\]
whenever $\ell(\pi) \geq \ell_0$. We can without loss of generality change the constant to handle all $\pi$ by using an alternate bound by $q^{\ell_0(d_{GK}(\pi) - 1/2\dim O + \eps)}$ for small $\ell$. 
\end{proof}

\section{Arthur-type and Unitarizable Representations}\label{sec:Atype}

% \subsection{GK dimension}
% \red{GK dimension from Langlands data: idea, Speh data of $\pi$ is Langlands data of $\pi$-Aubert dual}

\subsection{Classification}
In global applications, we mostly care about unitarizable irreducible representations. These have a more refined classification with particularly nice properties which we now recall.

For $1 \leq i \leq r$, let $\rho_i$ be a supercuspidal on $\GL_{N_i}(F)$ and let $a_i, d_i$ be positive integers. We denote
\begin{equation}\label{eq:Lform}
\bigoplus_{i=1}^r |\cdot|^{x_i} \rho_i[a_i][d_i]:= L(\check M)
\end{equation}
where we recall that $L(\check M)$ denotes a Langlands quotient, and $\check M$ is the multisegment
\begin{equation} \label{eq:Arthur type as Langlands Quotient}
\bigsqcup_{i=1}^r \bigsqcup_{j=1}^{d_i} \lf[\lf\langle x_i + \f{d_i-2j +1}2+ \f{1-a_i}2, x_i + \f{d_i-2j +1}2 +\f{a_i-1}2 \ri\rangle_{\rho_i} \ri].
\end{equation}
%This $L(\check{M})$ is a representation of $GL_N$ for $N = \sum_{i} N_i a_i d_i$.
%\textcolor{red}{SM: switched the order of these two terms.}

\begin{dfn}\label{def:Atype}
The representation $\pi$ is \emph{Arthur-type} if 
\[ 
\pi = \bigoplus_{i=1}^r \rho_i[a_i][d_i],
\]
i.e. if all the $x_i$ in \eqref{eq:Lform} are $0$.
\end{dfn}

\begin{rmk}
This definition is motivated by Arthur's classification: if $\pi$ is Arthur-type 
%\[
%\pi = \bigoplus_{i=1}^r \rho_i[a_i][d_i],
%\]
and $\rho_i$ corresponds to the supercuspidal Langlands parameter $\varphi_i : W_F \to \GL_{N_i}(\C)$, where $W_F$ is the Weil group of $F$, then $\pi$ has Arthur parameter
\[
\psi_\pi  = \bigoplus_{i=1}^r \varphi_i \boxtimes [a_i] \boxtimes [d_i] : W_F \times \SL_2(\C) \times \SL_2(\C) \to \GL_N(\C) ,
\]
where $[d]$ represents the $d$-dimensional representation of $\SL_2(\C)$. 
%\red{We can also speak of generalized Arthur parameters for any representation of the form \eqref{eq:Lform}.}
\end{rmk}

\begin{thm}[{\cite{Tad86b}}]\label{thm:unitarizable}
A smooth irreducible representation $\pi$ of $\GL_N(F)$ is unitarizable if and only if it is of the form
\[
\pi = \bigoplus_{i=1}^r |\cdot|^{x_i} \rho_i[a_i][d_i] = \bigoplus_{i=1}^{r_1}\rho_i[a_i][d_i] \oplus \bigoplus_{i=r_1+1}^{r_2} ( |\cdot|^{y_i} \rho_i[a_i][d_i] \oplus |\cdot|^{-y_i} \rho_i[a_i][d_i]),
\]
where for each $i$, $0 < y_i < 1/2$. (In this case, it can be uniquely written in this form). 
\end{thm}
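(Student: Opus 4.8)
The plan is to follow Tadić's strategy. Define the family $\mathcal B$ of \emph{building blocks} to consist of the Speh-type representations $\rho[a][d]$ of Definition~\ref{def:Atype} (Tadić's ``$u(\St(a,\rho),d)$'') together with the complementary-series representations $\pi(\rho[a][d],y):=|\cdot|^{y}\rho[a][d]\times|\cdot|^{-y}\rho[a][d]$ for $0<y<1/2$. The theorem is equivalent to the statement that the irreducible unitarizable representations of $\GL_N(F)$ are exactly the parabolic products $\sigma_1\times\cdots\times\sigma_m$ of members of $\mathcal B$, and that the multiset of factors is an invariant of the product.

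\emph{Sufficiency.} For the ``if'' direction I would argue in three steps. First, a parabolic product of irreducible unitarizable representations of smaller general linear groups is irreducible, by Bernstein's irreducibility theorem, and unitarizable since normalized induction preserves unitarizability; so it suffices to treat the members of $\mathcal B$. Second, $\rho[a][d]$ is unitarizable: this is Tadić's unitarizability of Speh representations, obtainable globally by realizing $\rho[a][d]$ as a local component of a residue of an Eisenstein series on $\GL$ over a number field (Moeglin--Waldspurger), or by Tadić's deformation argument building on Speh's archimedean result. Third, $\pi(\rho[a][d],y)$ is unitarizable for $y\in(0,1/2)$: for real $y$ the induced representation is Hermitian, it is irreducible and unitary at $y=0$ by the first step, the standard long intertwining operator is holomorphic on $(0,1/2)$, and the Bernstein--Zelevinsky linked-segment criterion places the first reducibility point at $y=1/2$; hence the invariant Hermitian form stays positive-definite on $(0,1/2)$.

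\emph{Necessity.} The substance is the ``only if'' direction. I would first pass to a single cuspidal line: the cuspidal-support decomposition of the smooth category factors any irreducible $\pi$ as a parabolic product of irreducibles supported on distinct cuspidal lines, and since $\pi\mapsto\overline{\widetilde\pi}$ permutes these lines, a Hermitian-form argument reduces the unitarizability of $\pi$ to that of its factors on self-conjugate lines (the factors on a conjugate pair of lines being contragredient to one another). Via the equivalence between a self-conjugate cuspidal block and the block of integral twists of the trivial character of $\GL_1$---compatible with parabolic induction, contragredient, and, crucially, unitarizability---one reduces to classifying the irreducible unitarizable subquotients of $|\cdot|^{m_1}\times\cdots\times|\cdot|^{m_n}$ with $m_i\in\Z$. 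Then induct on $n$: realizing such $\pi$ by the Langlands classification as the quotient of $L(m_1)\times\cdots\times L(m_r)$, unitarizability forces $\pi$ to be Hermitian, so its multiset of Langlands segments is stable under $\langle a,b\rangle\mapsto\langle -b,-a\rangle$; one then shows that a building block appearing in $\pi$ with nonzero real twist $x$ must be paired with its contragredient and satisfy $0<|x|<1/2$, since a twist with $|x|\ge 1/2$, or an unpaired Hermitian block with $x\ne0$, makes the invariant Hermitian form indefinite---detected via the standard intertwining operator together with the rank-one complementary-series computation of the previous step. This forces $\pi$ into the stated form and closes the induction.

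\emph{Main obstacle.} The hard part is the necessity direction, concentrated in the inductive step above. It requires simultaneously Bernstein's theorems that parabolic induction of unitarizables is irreducible and unitarizable, a complete and explicit account of the reducibility of products $|\cdot|^{x}\rho[a][d]\times(\text{smaller building block})$ via linked segments, and a delicate signature analysis of the invariant Hermitian forms and standard intertwining operators, all organized by the induction on rank. A secondary but real subtlety is checking that the reductions---to a single cuspidal line and then to $\GL_1$-characters---carry unitarizability along faithfully, not merely the combinatorial parametrization.
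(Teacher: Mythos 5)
The paper does not prove this statement: Theorem \ref{thm:unitarizable} is imported verbatim from Tad\'ic's classification of the unitary dual (\cite{Tad86b}), so there is no internal proof to compare yours against. What you have written is a roadmap of the literature proof rather than a proof, and judged as such it is a fair outline of the standard strategy (Bernstein irreducibility of unitary parabolic induction, unitarizability of the Speh blocks $\rho[a][d]$, the complementary-series deformation on $0<y<1/2$ with first reducibility at $y=1/2$ by the linked-segment criterion, and an exhaustion argument for necessity).

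Two steps in your necessity argument conceal genuine gaps. First, you reduce to the block of integral twists of the trivial character of $\GL_1$ via an equivalence of cuspidal blocks that you assert is ``compatible with unitarizability.'' That compatibility is not formal: a category equivalence between Bernstein blocks does not automatically preserve unitarity, and making this rigorous (via types and Hecke algebra isomorphisms) is itself a substantial theorem that postdates and is independent of Tad\'ic's argument; Tad\'ic instead works directly with the representations and never invokes such an equivalence. Second, the inductive step --- ``a building block with $|x|\ge 1/2$, or an unpaired block with $x\ne 0$, makes the Hermitian form indefinite, detected via the standard intertwining operator'' --- is precisely the content of the exhaustion half of Tad\'ic's paper, and a signature computation on a single rank-one edge does not by itself control the signature of the form on the full induced representation; this is where the actual work lives. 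You also elide that the unitarizability of $\rho[a][d]$ was originally a conditional input (settled by Tad\'ic's global/residual-Eisenstein argument and, much later, by purely local proofs), so citing it as ``obtainable'' is fine for a survey but is not a proof. If your goal is to justify the theorem as used in this paper, the correct move is simply to cite \cite{Tad86b} (and Bernstein for the irreducibility input), as the authors do.
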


\subsubsection{Inducing Data}
Unitarizable $\pi$ have particularly nice inducing data:
\begin{prop}[{\cite{ber06}}]\label{prop:unitaryinduction}
Let $\pi_i$ be unitarizable representations of $\GL_{N_i}(F)$ for $1 \leq i \leq k$. Then $\pi_1 \times \cdots \times \pi_k$ is irreducible. 
\end{prop}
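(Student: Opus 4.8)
The plan is to recover this theorem of Bernstein by a deformation argument: the Zelevinsky classification controls the induced representation for a generic deformation parameter, while the unitary structure controls it at the relevant special value.

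First I would reduce to $k=2$. By transitivity of normalized parabolic induction $\pi_1\times\cdots\times\pi_k=(\pi_1\times\cdots\times\pi_{k-1})\times\pi_k$, and since parabolic induction of a unitary representation of a Levi subgroup is again unitary (the invariant inner products glue to one on the normalized induced space), $\pi_1\times\cdots\times\pi_{k-1}$ is unitarizable; induction on $k$ then reduces to two factors. Moreover $\pi_1\times\pi_2$ is itself unitarizable and of finite length, hence a finite direct sum of irreducible subrepresentations, so it suffices to prove it is indecomposable.

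For the deformation, put $\Pi_s:=|\cdot|^{s}\pi_1\times\pi_2$ for $s\in\R$, so that $\Pi_0=\pi_1\times\pi_2$. Writing $\pi_1=Z(M_1)$ and $\pi_2=Z(M_2)$ in the Zelevinsky classification, one has $|\cdot|^{s}\pi_1=Z(M_1^{(s)})$, where $M_1^{(s)}$ is obtained by twisting each segment of $M_1$ by $|\cdot|^{s}$. Two segments $\langle a,b\rangle_{\rho}$ and $\langle a',b'\rangle_{\rho'}$ can be linked only if $\rho'\cong|\cdot|^{n}\rho$ for some $n\in\Z$, so a segment of $M_1^{(s)}$ is linked to a segment of $M_2$ only for $s$ in a discrete subset of $\R$. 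For all other $s$, the standard fact that $Z(M'\sqcup M'')=Z(M')\times Z(M'')$ when no segment of $M'$ is linked to a segment of $M''$ yields $\Pi_s=Z(M_1^{(s)}\sqcup M_2)$, which is irreducible. In particular $\Pi_s$ is irreducible for all sufficiently small $s\neq 0$, and it remains to descend to $s=0$.

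The hard part is exactly this descent: one must show that the reducibility that a priori could appear at $s=0$ is ruled out by the combination of unitarity of $\Pi_0$ and irreducibility of the nearby fibres. I would attack it by tracking the family $\{\Pi_s\}$ through invariants depending algebraically on $s$ — its cuspidal support, its Jacquet modules, and its restriction to the mirabolic subgroup, where Bernstein's analysis of $P$-invariant distributions applies — and playing the rigidity of the generic (irreducible) fibres against the canonical invariant pairing on $\Pi_0$ to exclude any proper decomposition. This is the content of \cite{ber06}, to which we refer for the details. Alternatively, one can use Theorem \ref{thm:unitarizable} to reduce to the irreducibility of an arbitrary product of the building blocks of the unitary dual and argue via the ladder-representation combinatorics of \cite{LM14,LM16}, though that route is of comparable depth.
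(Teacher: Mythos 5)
Your proposal and the paper are in the same position: neither actually proves this statement. The paper's ``proof'' consists entirely of the citation to \cite{ber06} together with a pointer to \cite[\S 6.3]{LM16} for a simplified argument, and your plan, after a series of correct but standard reductions, explicitly defers the one genuinely hard step (the descent from irreducibility of $\Pi_s$ for generic $s$ to irreducibility at $s=0$) back to \cite{ber06}. The reductions you do carry out are fine: normalized induction preserves unitarity, so $\pi_1\times\cdots\times\pi_k$ is unitarizable and of finite length, hence semisimple, and irreducibility is equivalent to indecomposability; transitivity of induction reduces to $k=2$; and the Zelevinsky classification gives irreducibility of $|\cdot|^s\pi_1\times\pi_2$ off a discrete set of $s$. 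None of this is the content of Bernstein's theorem, and your sketch of how the descent should go (Jacquet modules, restriction to the mirabolic, $P$-invariant distributions) is a reasonable gloss on the actual mechanism of \cite{ber06}, but it is a description of a proof, not a proof.

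One concrete warning about your proposed ``alternative route'': using Theorem \ref{thm:unitarizable} (Tadi\'c's classification of the unitary dual) to reduce to products of the building blocks is circular. Tadi\'c's classification takes as one of its inputs precisely the statement that a product of irreducible unitarizable representations is irreducible --- this is Bernstein's theorem, i.e.\ the proposition you are trying to prove --- so that route cannot be used here. If you want to present more than a citation, the honest options are to reproduce Bernstein's mirabolic-restriction argument from \cite{ber06} or the simplification in \cite[\S 6.3]{LM16}; the deformation framing alone does not close the gap.
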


\begin{proof}
See also \cite[\S6.3]{LM16} for a simplified proof. 
\end{proof}

As a well-known related result:

\begin{cor}\label{cor:unitarizableisfullinduction}
Let
\[
\pi := \bigoplus_{i=1}^r |\cdot|^{x_i} \rho_i[a_i][d_i] := \bigoplus_{i=1}^{r_1}\rho_i[a_i][d_i] \oplus \bigoplus_{i=r_1 + 1}^{r_2} ( |\cdot|^{y_i} \rho_i[a_i][d_i] \oplus |\cdot|^{-y_i} \rho_i[a_i][d_i])
\]
be a unitarizable irreducible representation of $\GL_N(F)$ as in Theorem \ref{thm:unitarizable}. Then $\pi$ is equal to the full parabolic induction
\[
\pi = \bigtimes_{i=1}^r |\cdot|^{x_i} \rho_i[a_i][d_i].
\]
\end{cor}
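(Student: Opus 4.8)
The plan is to exhibit $\pi$ as a parabolic induction of irreducible unitarizable representations, and then to use that, by its very definition in \eqref{eq:Lform}, the symbol $\bigoplus$ denotes the Langlands quotient of precisely such an induction; irreducibility of the induction will then force the quotient to be the whole thing.

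First I would regroup the data in Theorem \ref{thm:unitarizable} into $r_2$ ``blocks'': set $\tau_i := \rho_i[a_i][d_i]$ for $1 \leq i \leq r_1$ and $\tau_i := |\cdot|^{y_i}\rho_i[a_i][d_i] \times |\cdot|^{-y_i}\rho_i[a_i][d_i]$ for $r_1 < i \leq r_2$. Each $\tau_i$ is irreducible and unitarizable: for the Speh blocks this is immediate from Theorem \ref{thm:unitarizable} (take $\pi = \rho_i[a_i][d_i]$) and from \cite{Tad86b}, and for the complementary-series blocks I would invoke the classical fact from Tadić's work \cite{Tad86b} that for $0 < y_i < 1/2$ the full induction $|\cdot|^{y_i}\rho_i[a_i][d_i] \times |\cdot|^{-y_i}\rho_i[a_i][d_i]$ is irreducible; being irreducible, it equals its own Langlands quotient, which is the representation $|\cdot|^{y_i}\rho_i[a_i][d_i] \oplus |\cdot|^{-y_i}\rho_i[a_i][d_i]$ appearing in Theorem \ref{thm:unitarizable}, and in particular is unitarizable. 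By associativity of parabolic induction, $\tau_1 \times \cdots \times \tau_{r_2} = \bigtimes_{i=1}^r |\cdot|^{x_i}\rho_i[a_i][d_i]$, the right-hand product being taken over all $r$ factors with the indexing of the statement.

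I would then apply Proposition \ref{prop:unitaryinduction} to conclude that $\tau_1 \times \cdots \times \tau_{r_2}$ is irreducible. On the other hand, writing $\tau_i = L(\check M_i)$ for its Langlands multisegment, the definition \eqref{eq:Lform}--\eqref{eq:Arthur type as Langlands Quotient} identifies $\pi$ with $L\bigl(\bigsqcup_{i=1}^r \check M_i\bigr)$, which by multiplicativity of the Langlands quotient --- Theorem \ref{thm:langlandsclassification}(2), applied after putting the concatenated multisegment in ordered form --- is the unique irreducible quotient of $L(\check M_1) \times \cdots \times L(\check M_{r_2}) = \tau_1 \times \cdots \times \tau_{r_2}$. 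Since an irreducible representation is its own unique irreducible quotient, this gives $\pi = \tau_1 \times \cdots \times \tau_{r_2} = \bigtimes_{i=1}^r |\cdot|^{x_i}\rho_i[a_i][d_i]$, as desired.

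I expect the one genuine subtlety to be the bookkeeping forced by the overloaded $\oplus$/$\times$ notation. Proposition \ref{prop:unitaryinduction} applies only to genuinely unitarizable irreducible factors, whereas the individual twisted Speh representations $|\cdot|^{\pm y_i}\rho_i[a_i][d_i]$ with $y_i \neq 0$ are not unitarizable (not even Hermitian); so one must keep each complementary-series pair together as a single atomic block when invoking Bernstein's theorem, and only split it back into a product of two twisted Speh representations afterward, which is where the irreducibility of complementary series enters. A minor secondary point is arranging the concatenated multisegment into an ordered sequence so that Theorem \ref{thm:langlandsclassification}(2) applies verbatim; this is routine, since every finite multiset of segments admits an ordering and $\tau_1 \times \cdots \times \tau_{r_2}$, being irreducible, is isomorphic to any reordering of its factors.
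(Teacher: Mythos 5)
Your overall architecture matches the paper's: group the complementary-series pairs into irreducible unitarizable blocks, apply Proposition \ref{prop:unitaryinduction} to get irreducibility of the full induction, and then identify that irreducible induction with the Langlands quotient defining $\pi$. The first two steps are sound (the paper derives irreducibility of $L(M_i) \times L(M_i')$ from the fact that the segments of $M_i$ and $M_i'$ are pairwise unlinked, via Theorem 2.6 of \cite{LM16}, rather than citing Tad\'ic's complementary series directly, but either route is legitimate, and your point about keeping each pair atomic when invoking Bernstein is exactly right).

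The gap is in the final identification. Theorem \ref{thm:langlandsclassification}(2) only asserts that $L(\check M)$ is the unique irreducible quotient of the product of the \emph{single-segment} representations $L(m)$ taken in a globally ordered arrangement; it does not assert that $L(\bigsqcup_i \check M_i)$ is the unique irreducible quotient of $L(\check M_1) \times \cdots \times L(\check M_{r_2})$. To deduce the latter from the former you would need the globally ordered standard module to surject onto the block product, i.e.\ a global Langlands ordering of all the segments in which each block $\check M_i$ occupies a consecutive stretch. Such an ordering need not exist: for $\pi = \rho[2][2] \oplus \rho[1][3]$, the segment $\langle -1,-1\rangle_\rho$ of $\rho[1][3]$ precedes the segment $\langle 0,1\rangle_\rho$ of $\rho[2][2]$, while $\langle -1,0\rangle_\rho$ of $\rho[2][2]$ precedes $\langle 1,1\rangle_\rho$ of $\rho[1][3]$, so neither block can be placed entirely before the other. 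Reordering the factors of a parabolic induction preserves Jordan--H\"older multisets but not quotients, so knowing that the irreducible representation $\tau_1 \times \cdots \times \tau_{r_2}$ is a quotient of the block-ordered product does not identify it with the unique irreducible quotient of the globally ordered one. This is precisely the point the paper handles by importing Theorem 2.6 of \cite{LM16} (the statement dual to part 5 of Proposition 2.5) and applying it inductively to obtain $L(M_1 + \cdots + M_j) = L(M_1) \times \cdots \times L(M_j)$; some such multiplicativity statement for Langlands quotients of whole multisegments is genuinely needed, and it is not a formal consequence of Theorem \ref{thm:langlandsclassification}(2) plus reordering.
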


\begin{proof}

%Proposition %\ref{prop:unitaryinduction}, 
%\[
%\pi = \bigtimes_{i=1}^{r_1}\rho_i[a_i][d_i] \times \bigtimes_{i=1}^{r_2} ( |\cdot|^{y_i} \rho_i[a_i][d_i] \oplus |\cdot|^{-y_i} \rho_i[a_i][d_i])
%\]
%and 

%\textcolor{red}{SM: rewrote this proof.} 
For $1 \le i \le r$, we let $M_i$ be the multisegment corresponding to $|\cdot|^{x_i} \rho_i[a_i][d_i]$, and for $r_1 + 1 \le i \le r_2$, we let $M_i'$ correspond to $|\cdot|^{-y_i} \rho_i[a_i][d_i]$.  Let $r_1 + 1 \le i \le r_2$ be given. 

Because the segments in $M_i$ and $M_i'$ are pairwise unlinked, Theorem 2.6 of \cite{LM16} (and in particular the statement dual to part 6 of Proposition 2.5) implies that $L(M_i) \times L(M_i')$ is irreducible, and hence that $L(M_i + M_i') = L(M_i) \times L(M_i')$.  Theorem \ref{thm:unitarizable} then implies that $L(M_i) \times L(M_i')$ is unitarizable.  

It follows that the representations
\[
\bigtimes_{i = 1}^{r_1} L(M_i) \times \bigtimes_{i = r_1 + 1}^{r_2} (L(M_i) \times L(M_i'))
\]
and $\bigtimes_{i = 1}^r L(M_i)$ are both irreducible by Theorem \ref{prop:unitaryinduction} and \cite[Thm 1.9]{Zel80} respectively.  We may now apply Theorem 2.6 of \cite{LM16} (in particular the statement dual to part 5 of Proposition 2.5) inductively to deduce that $L(M_1 + \ldots + M_j) = L(M_1) \times \ldots \times L(M_j)$ for all $j$, as required.
\end{proof}

%%%%%%%%Maybe not the cleanest%%%%%%%%%%%
% \begin{dfn}
% If $0 < y_i < 1/2$, we will shorthand
% \[
% \rho_i[a_i][d_i + 2y_i] := |\cdot|^{y_i} \rho_i[a_i][d_i] \oplus |\cdot|^{-y_i} \rho_i[a_i][d_i]. 
% \]
% In this way, the unitarizable irreducible representations $\pi$ of $\GL_N(F)$ are those of the form
% \[
% \bigoplus_{i=1}^r |\cdot|^{x_i} \rho_i[a_i][d_i]
% \]
% for positive integers $a_i$ and positive real numbers $d_i$. 
% \end{dfn}

\subsection{Basic Invariants}

\subsubsection{Duality}
Aubert--Zelevinsky duals of unitarizable representations are easy to calculate: duality amounts to ``swapping the two copies of $
\SL_2$".

\begin{prop}\label{prop:Atypedual}
Let $\pi$ be a unitarizable irreducible representation of $\GL_N(F)$. Then $\check \pi$ is also unitarizable and 
\[
\pi = \bigoplus_{i=1}^r |\cdot|^{x_i} \rho_i[a_i][d_i] \iff \check \pi = \bigoplus_{i=1}^r |\cdot|^{x_i} \rho_i[d_i][a_i].
\]
\end{prop}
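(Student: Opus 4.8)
The plan is to compute the Aubert--Zelevinsky dual $\check\pi$ through the Zelevinsky and Langlands combinatorics and read off the claimed formula. First I would recall that by Theorem~\ref{thm:dualsegment}, $M(\check\pi) = \check M(\pi)$; that is, the Zelevinsky multisegment of the dual equals the Langlands multisegment of $\pi$. So the strategy is: start from the explicit Langlands multisegment $\check M(\pi)$ of $\pi$ given by \eqref{eq:Arthur type as Langlands Quotient} (with the $x_i$-twists included), interpret this same multisegment as the Zelevinsky data of $\check\pi$, and then re-bundle it into the ``$\rho[a][d]$'' shape to identify $\check\pi$.

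The key computational step is a symmetry observation about the multisegment in \eqref{eq:Arthur type as Langlands Quotient}. For a single summand $|\cdot|^{x}\rho[a][d]$, the Langlands multisegment consists of the $d$ segments $\langle x + \tfrac{d-2j+1}{2} + \tfrac{1-a}{2},\, x + \tfrac{d-2j+1}{2} + \tfrac{a-1}{2}\rangle_\rho$ for $1\le j\le d$; each such segment has length $a$, and its center is $x + \tfrac{d-2j+1}{2}$, so the centers form an arithmetic progression of length $d$ and common difference $1$ centered at $x$. This is a $d\times a$ ``rectangle'' of supercuspidals (in the sense of \cite{LM14}): the underlying multiset of twists of $\rho$ is $\{|\cdot|^{x+p}\rho : p \in \text{a box of dimensions }d\times a\text{ centered at }0\}$ with appropriate multiplicities. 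Because a $d \times a$ box and an $a \times d$ box carry the same multiset of lattice points, the Zelevinsky multisegment obtained by slicing this box the other way --- i.e. into $a$ segments of length $d$ --- is exactly the Langlands multisegment of $|\cdot|^x\rho[d][a]$. Concretely: the Zelevinsky-side decomposition of the box into horizontal segments of length $d$ gives the multisegment $\bigsqcup_{j=1}^{a}[\langle x + \tfrac{a-2j+1}{2} + \tfrac{1-d}{2},\, x+\tfrac{a-2j+1}{2} + \tfrac{d-1}{2}\rangle_\rho]$, which is precisely $\check M(|\cdot|^x\rho[d][a])$ read as Zelevinsky data, hence by Theorem~\ref{thm:dualsegment} this is $M(|\cdot|^x\rho[d][a])$. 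Summing over $i$ and using that the segments coming from distinct $\rho_i$ or from twists with different cuspidal support never interact (so the Zelevinsky multisegments just add), I get $M(\check\pi) = \bigsqcup_i M(|\cdot|^{x_i}\rho_i[d_i][a_i])$, which by definition of $\bigoplus$ in \eqref{eq:Lform} says $\check\pi = \bigoplus_{i=1}^r |\cdot|^{x_i}\rho_i[d_i][a_i]$, as claimed.

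Finally, for the unitarizability of $\check\pi$: once the formula $\check\pi = \bigoplus_i |\cdot|^{x_i}\rho_i[d_i][a_i]$ is established, this is immediate from Theorem~\ref{thm:unitarizable}, since the defining data $(\rho_i, a_i, d_i, x_i)$ with $0 < y_i < 1/2$ satisfies exactly the same constraints after swapping $a_i \leftrightarrow d_i$ (the condition on $x_i$, namely being $0$ or $\pm y_i$ with $0<y_i<1/2$, is untouched). Alternatively one could invoke the general fact that Aubert--Zelevinsky duality preserves unitarizability, but deriving it from Tadi\'c's classification keeps the argument self-contained.

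The main obstacle I anticipate is making the ``box'' bookkeeping rigorous: one must check carefully that the Langlands multisegment \eqref{eq:Arthur type as Langlands Quotient} is indeed ordered (or reorder it so that Theorem~\ref{thm:langlandsclassification}(2) applies), that slicing the rectangle the other way genuinely produces the Zelevinsky data of the swapped representation rather than something only isomorphic to it up to linkage subtleties, and that the cross-terms between different $i$ cause no linkage (which follows because linked segments must share the same unitary cuspidal support $\rho_i$ and the $x_i$ are chosen so that no spurious linking occurs --- this is where one uses the precise form of Theorem~\ref{thm:unitarizable} and the $y_i < 1/2$ bound). In the cited literature this rectangle-transpose phenomenon is essentially \cite[\S A.5]{LM16} combined with the Moeglin--Waldspurger algorithm \cite{MW86}, so I would likely cite those for the single-segment computation and only spell out the reassembly into $\rho[a][d]$-form.
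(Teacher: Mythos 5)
The paper's own proof is a one-line citation: the statement ``can be checked by the algorithm of \cite{MW86} using that $|x_i|<1/2$ --- see \cite[\S3.2]{LM14}.'' Your overall strategy (use $M(\check\pi)=\check M(\pi)$ and transpose the rectangle) is the right one, but your written justification has two genuine gaps. First, the single-block transposition step is circular as stated. You produce the multisegment $\bigsqcup_{j=1}^{a}[\langle x+\tfrac{a-2j+1}{2}+\tfrac{1-d}{2},\,x+\tfrac{a-2j+1}{2}+\tfrac{d-1}{2}\rangle_\rho]$, correctly identify it as $\check M(|\cdot|^x\rho[d][a])$ by definition, and then say ``hence by Theorem~\ref{thm:dualsegment} this is $M(|\cdot|^x\rho[d][a])$.'' But Theorem~\ref{thm:dualsegment} gives $\check M(\sigma)=M(\check\sigma)$, i.e.\ it converts the Langlands data of $\sigma$ into the Zelevinsky data of its \emph{dual}, not of $\sigma$ itself; identifying $M(\sigma)$ with $\check M(\sigma)$ transposed is precisely the assertion being proved. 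That conversion is exactly what the Moeglin--Waldspurger algorithm (or the ladder computation of \cite[\S3.2]{LM14}) supplies, so the citation you defer to at the end is not optional polish --- it is the proof of the key step.

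Second, your claim that ``the cross-terms between different $i$ cause no linkage'' is false, so the reassembly over $i$ does not follow from unlinkedness. Two summands may share the same $\rho_i$ and the same $x_i$ with different $(a_i,d_i)$, and their segments can be linked: for $\rho[2][1]\oplus\rho[2][3]$ the Langlands segments include $\langle-\tfrac12,\tfrac12\rangle_\rho$ and $\langle\tfrac12,\tfrac32\rangle_\rho$, which are linked. The condition $|x_i|<1/2$ prevents interaction between blocks with \emph{distinct} $x_i$, not within a fixed cuspidal line. The MW algorithm is not additive over sub-multisegments in general, so the correct route for the multi-block step is either to run the algorithm on the full multisegment (as the paper's citation does), or to use Corollary~\ref{cor:unitarizableisfullinduction} to write $\pi=\bigtimes_i|\cdot|^{x_i}\rho_i[a_i][d_i]$, invoke compatibility of Aubert--Zelevinsky duality with parabolic induction of irreducibles, dualize each factor separately, and apply Corollary~\ref{cor:unitarizableisfullinduction} and Proposition~\ref{prop:unitaryinduction} again to reassemble the (irreducible, unitarizable) product. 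Your final paragraph on unitarizability of $\check\pi$ via Theorem~\ref{thm:unitarizable} is fine once the formula is established.
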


\begin{proof}
This is well-known and can be checked by the algorithm of \cite{MW86} using that $|x_i| < 1/2$---see, e.g, \cite[\S3.2]{LM14}.  
\end{proof}

\subsubsection{Arthur-$\SL_2$} For unitarizable $\pi$, asymptotic invariants are determined by the following coarsening of the defining data. 

\begin{dfn}
If $\pi$ is unitarizable with
\[
\pi = \bigoplus_{i=1}^r |\cdot|^{x_i} \rho_i[a_i][d_i]
\]
where $\rho_i$ is a supercuspidal of $\GL_{N_i}(F)$:
\begin{itemize}
\item
The \emph{Arthur-$\SL_2$} of $\pi$ is the partition
\[
A(\pi) := [d_1^{(N_1 a_1)}, \dotsc, d_r^{(N_r a_r)}].  
\]
If $\pi$ is of Arthur type, this is the partition of $N$ given by restricting the Arthur parameter $\psi_\pi$ to the second (Arthur) $\SL_2$.
\item The \emph{augmented Arthur-$\SL_2$} is the multiset of pairs 
\[
A^u(\pi) :=  [(x_1, d_1)^{(N_1 a_1)}, \dotsc, (x_r,d_r)^{(N_r a_r)}].
\]
\end{itemize}
Note that if $\pi$ is Arthur-type, the two notions encode the exact same information.
\end{dfn}

\subsubsection{GK-dimension}
The dependence on the Arthur-$\SL_2$ is most immediate for the wavefront set and GK-dimension:
%This lets us calculate GK-dimensions of unitarizable representations.

\begin{cor}[{see also \cite[Thm 1.1]{LS25}}]\label{cor:ASL2toGKdim}
Let $\pi$ be a unitarizable irreducible representation of $\GL_N(F)$. Then $WF(\pi) = \{\widehat{A(\pi)}\}$. In particular, if $A(\pi) = [d_1, \dotsc, d_s]$, then
\[
d_{GK}(\pi) = \f12\lf(N^2 - \sum_{i=1}^s d_i^2\ri). 
\]
\end{cor}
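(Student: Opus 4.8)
The plan is to reduce the GK-dimension computation for a general unitarizable $\pi$ to the already-established formula of Corollary~\ref{cor:gkdimform}, which is phrased in terms of the Zelevinsky multisegment $M(\pi)$. By Theorem~\ref{thm:wavefront}, we have $WF(\pi) = \{\widehat{P(\pi)}\}$, so it suffices to identify the partition $P(\pi)$ of Definition~\ref{def: partition} with $A(\pi)$, after which the dimension formula \eqref{eq orbitdimension} applied to $\widehat{A(\pi)}$ gives exactly $d_{GK}(\pi) = \tfrac12(N^2 - \sum_i d_i^2)$ as in the statement. So the whole content is the combinatorial identity $P(\pi) = A(\pi)$ (as multisets/partitions).

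First I would write $\pi = \bigoplus_{i=1}^r |\cdot|^{x_i}\rho_i[a_i][d_i]$ and recall from \eqref{eq:Lform}--\eqref{eq:Arthur type as Langlands Quotient} that this is $L(\check M)$ for an explicit multisegment $\check M$ consisting, for each $i$, of $d_i$ segments each of the form $\langle x_i + \tfrac{d_i - 2j+1}{2} + \tfrac{1-a_i}{2},\, x_i + \tfrac{d_i-2j+1}{2} + \tfrac{a_i-1}{2}\rangle_{\rho_i}$ for $1 \le j \le d_i$; in particular each such segment has length $a_i$ (i.e. $b - a + 1 = a_i$), and $\rho_i$ lives on $\GL_{N_i}$. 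By Theorem~\ref{thm:dualsegment}, $M(\pi) = \check M(\check\pi)$, and by Proposition~\ref{prop:Atypedual}, $\check\pi = \bigoplus_{i=1}^r |\cdot|^{x_i}\rho_i[d_i][a_i]$. Applying the Langlands-multisegment description \eqref{eq:Arthur type as Langlands Quotient} to $\check\pi$ — with $a_i$ and $d_i$ swapped — shows $\check M(\check\pi)$ consists, for each $i$, of $a_i$ segments each of length $d_i$ supported on $\rho_i$. Hence in Definition~\ref{def: partition}, $P(M(\pi)) = P(\check M(\check\pi))$ picks up, from block $i$, the entry $d_i$ with multiplicity $N_i a_i$; that is, $P(\pi) = [d_1^{(N_1 a_1)}, \dots, d_r^{(N_r a_r)}] = A(\pi)$.

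With $P(\pi) = A(\pi)$ in hand, Theorem~\ref{thm:wavefront} gives $WF(\pi) = \{\widehat{A(\pi)}\}$, and then the orbit-dimension formula \eqref{eq orbitdimension} applied to the partition $\widehat{A(\pi)}$ — whose dual is $A(\pi)$ itself — yields $\dim O_{\widehat{A(\pi)}} = N^2 - \sum_{i} d_i^2$ where $A(\pi) = [d_1, \dots, d_s]$ lists the parts of $A(\pi)$ with multiplicity. Dividing by $2$ gives the claimed value of $d_{GK}(\pi)$. The main obstacle is purely bookkeeping: one must be careful that the segments produced by \eqref{eq:Arthur type as Langlands Quotient} are pairwise appropriately ordered/unlinked-compatible so that the formula legitimately describes $M(\check\pi)$ (this is exactly where Proposition~\ref{prop:Atypedual}, valid because $|x_i| < 1/2$, is doing the real work), and that the length-$a_i$ versus length-$d_i$ roles are not transposed when passing through Aubert--Zelevinsky duality. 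No genuinely hard estimate is involved; the result is a dictionary translation between the three parameterizations (Zelevinsky partition, Langlands multisegment, Arthur-$\SL_2$).
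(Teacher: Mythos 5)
Your proposal is correct and follows essentially the same route as the paper: reduce via Theorem \ref{thm:wavefront} and Theorem \ref{thm:dualsegment} to the identity $A(\pi) = P(\check M(\check\pi))$, then verify it using Proposition \ref{prop:Atypedual} and the explicit multisegment \eqref{eq:Arthur type as Langlands Quotient} with the roles of $a_i$ and $d_i$ swapped. The paper's proof is just a terser version of the same bookkeeping.
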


\begin{proof}
We have
\[
WF(\pi) \overset{\text{Thm. }\ref{thm:wavefront}}=  \left\{\widehat {P(M(\pi))}\right\} \overset{\text{Thm. }\ref{thm:dualsegment}}= \left\{\widehat{P(\check M(\check \pi))}\right\}. 
\]
This reduces the problem to showing that $A(\pi) = P(\check M (\check \pi))$, which follows from Proposition \ref{prop:Atypedual} and Definition \ref{def: partition}.
\end{proof}

\subsection{A Variant Fixed-Vector Bound}
For global applications, it is convenient to have a fixed vector bound analogous to \eqref{eq:alt speh bound} in terms of the Langlands, rather than Zelevinsky, classification. %and we do this for Arthur-type representations. 
We begin by proving a lower bound on fixed vectors in Steinberg representations:

\begin{lem}\label{lem:steinberglower}
Choose $N,k$. Then there are uniform constants $C_1, C_2$ such that for $\rho$ a supercuspidal on $\GL_N(F)$ and $q^\ell \geq C_1$:
\[
\dim(\St(k, \rho)^{K_\ell}) \geq C_2 q^{\f12 \ell N^2k(k-1)} \lf(\dim(\rho^{K_\ell})\ri)^k.
\]
\end{lem}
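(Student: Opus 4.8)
The plan is to derive the lower bound from the upper bound for Speh representations (Theorem~\ref{thm:pre speh bound}) by exploiting Aubert--Zelevinsky duality, which interchanges $\St(k,\rho)$ and $\Speh(k,\rho)$ up to an unramified twist, together with the fact that a representation and its Aubert--Zelevinsky dual are subquotients of the \emph{same} parabolic induction of supercuspidals. Concretely, both $\St(k,\rho)$ and $\Speh(k,\rho)$ appear in $\Pi := |\cdot|^{(1-k)/2}\rho \times \cdots \times |\cdot|^{(k-1)/2}\rho$, so $\dim(\St(k,\rho)^{K_\ell}) + \dim(\Speh(k,\rho)^{K_\ell}) \le \dim(\Pi^{K_\ell})$ is too crude; instead I want a lower bound, so the key point is that the Jacquet--Langlands-type combinatorics forces $\St(k,\rho)$ to carry ``most'' of the fixed vectors of $\Pi$.

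First I would compute $\dim(\Pi^{K_\ell})$ exactly up to uniform constants via Lemma~\ref{lem:inductionbound}: it equals $q^{\frac12 \ell N^2 k(k-1)} (\dim \rho^{K_\ell})^k$ up to uniform multiplicative constants, since $\frac12(N^2k^2 - \sum_i N^2) = \frac12 N^2 k(k-1)$ when all $k$ factors have size $N$. Next, the composition series of $\Pi$ consists of the irreducible subquotients indexed by submultisegments of the ``maximal'' multisegment $[\langle \frac{1-k}{2}+j, \frac{k-1}{2}\rangle_\rho \,:\, \ldots]$; there are only finitely many such subquotients (a number bounded in terms of $k$ alone, independent of $\rho$ and $\ell$), and by additivity of $\dim((-)^{K_\ell})$ on the composition series, $\dim(\Pi^{K_\ell}) = \sum_{\sigma} m_\sigma \dim(\sigma^{K_\ell})$ with bounded multiplicities $m_\sigma$. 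Every subquotient $\sigma \ne \St(k,\rho)$ other than $\St$ itself is (a twist of) a representation with strictly smaller GK-dimension --- equivalently, its Zelevinsky partition strictly refines that of $\Speh(k,\rho)$ being dual to $\St$ --- so by Corollary~\ref{cor:bodyfixedvector} applied uniformly, $\dim(\sigma^{K_\ell}) \le C_\eps q^{\ell(d_{GK}(\sigma)+\eps)}(\ldots)$ with $d_{GK}(\sigma) < \frac12 N^2 k(k-1) + (k-1)\,d_{GK}(\rho)\cdot(\ldots)$; more precisely I would use the refined bound \eqref{eq:alt gen bound 2}, which separates off the $(\dim \rho^{K_\ell})^{k}$ factor, so that the comparison is purely in the power of $q^\ell$. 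Then $\dim(\St(k,\rho)^{K_\ell}) = \dim(\Pi^{K_\ell}) - \sum_{\sigma \ne \St} m_\sigma \dim(\sigma^{K_\ell}) \ge c\, q^{\frac12 \ell N^2 k(k-1)}(\dim\rho^{K_\ell})^k - C_\eps q^{\ell(\frac12 N^2 k(k-1) - \delta + \eps)}(\dim \rho^{K_\ell})^k$ for some fixed $\delta > 0$ (the GK-dimension gap), and choosing $\eps < \delta$ the second term is negligible once $q^\ell \ge C_1$, giving the claim with $C_2 = c/2$.

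The main obstacle is verifying the strict GK-dimension inequality $d_{GK}(\sigma) < d_{GK}(\St(k,\rho))$ for \emph{every} irreducible subquotient $\sigma$ of $\Pi$ other than $\St(k,\rho)$ itself, uniformly in $\rho$, and checking this gap is bounded below by a $\rho$-independent constant $\delta$. This is where the combinatorics of the Zelevinsky classification enters: one must show that among all subquotients of a parabolic induction of $k$ twists of a single supercuspidal, $\St(k,\rho)$ --- which as the Langlands quotient corresponds to the single segment $\langle \frac{1-k}{2},\frac{k-1}{2}\rangle_\rho$, hence has Zelevinsky partition $[k^{(N)}]$ after dualizing, the \emph{least refined} one, i.e. maximal GK-dimension $\frac12(N^2k^2 - Nk^2) = \frac12 Nk^2(N-1)$... wait, I should double check: $\St(k,\rho)$ is the generic constituent, so it has \emph{maximal} wavefront set $[N k]$ equivalently $P(\St(k,\rho)) = [1^{(Nk)}]$, GK-dim $\frac12 Nk(Nk-1)$, while all others are strictly smaller, with the next-largest being bounded away by the discreteness of the finite poset $\mathcal P(Nk)$ near its top. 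I would make this precise by noting $\dim(\Pi^{K_\ell})$ is dominated by its generic constituent and invoking \eqref{eq:alt gen bound 2} for the rest; the uniformity in $\rho$ follows because the GK-dimensions of all constituents, by Corollary~\ref{cor:gkdimform}, depend only on the shapes $(N_i, k_i)$ of the submultisegments, of which there are finitely many for fixed $N, k$.
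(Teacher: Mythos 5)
Your proposal is correct and follows essentially the same route as the paper: realize $\St(k,\rho)$ inside the full induction $\Pi$ (where it occurs with multiplicity one), compute $\dim(\Pi^{K_\ell})$ via Lemma~\ref{lem:inductionbound}, and subtract the contributions of the boundedly many other constituents, each of which has strictly and uniformly smaller GK-dimension so that \eqref{eq:alt gen bound 2} makes them negligible for $q^\ell$ large. Your self-correction on the GK-dimension of $\St(k,\rho)$ (it is the generic constituent, $d_{GK}=\tfrac12 Nk(Nk-1)$) lands on the right statement, and the uniformity-in-$\rho$ point via Corollary~\ref{cor:gkdimform} is exactly how the paper handles it.
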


\begin{proof}
%\textcolor{red}{SM rewrote this proof.} 
By \cite[\S 9.1]{Zel80}, the Zelevinsky data of $\St(k, \rho)$ is $[|\cdot|^{\f{k-1}2} \rho, \ldots, |\cdot|^{\f{-k+1}2} \rho]$, so that it is a subrepresentation of
\[
\St(k, \rho) \subseteq \sigma:=|\cdot|^{\f{k-1}2} \rho \times |\cdot|^{\f{k-3}2} \rho \times \cdots \times |\cdot|^{\f{-k+1}2} \rho. 
\]
Moreover, $\St(k, \rho)$ occurs with multiplicity 1 in $\sigma$ by \cite[Thm 7.1]{Zel80}.  By Lemma~\ref{lem:inductionbound}, there is a uniform constant $C$ such that
\[
\dim(\sigma^{K_\ell}) \geq Cq^{\f12\ell N^2k(k-1)} \lf(\dim(\rho^{K_\ell})\ri)^k.
\]
Consider the non-Steinberg irreducible subquotients $\pi$ of $\sigma$, which satisfy:
\[
\dim(\St(k, \rho)^{K_\ell}) = \dim( \sigma^{K_\ell}) - \sum_{\pi \neq \St(k, \rho)} \dim(\pi^{K_\ell}).
\]
%summing over all other irreducible subquotients of $\sigma$. 
We give an upper bound on this rightmost sum.

Let $\pi \neq \St(k, \rho)$ be one of the other subquotients. By uniqueness of supercuspidal supports, 
%https://personal.math.ubc.ca/~cass/research/pdf/bernstein.pdf section 3 if we really want a refeerence 
we have $M(\pi) = [m_1, \dotsc, m_r]$ with $r < k$ and
\[
\bigsqcup_{i=1}^r m_i = \lf \langle \f{1-k}2, \f{k-1}2 \ri\rangle_\rho. 
\]
In particular, Corollary \ref{cor:gkdimform} implies that $d_{GK}(\pi) < \f12 Nk(Nk-1)$, so that \eqref{eq:alt gen bound 2} gives
\[
\dim(\pi^{K_\ell}) \leq C'q^{\f12\ell (N^2k(k-1) - a)} \lf(\dim(\rho^{K_\ell})\ri)^k 
\]
for some $a > 0$ and uniform $C'$. Finally, $\pi$ decomposes into $2^{k-1}$ irreducible subquotients by \cite[Cor 2.3]{Zel80}. 
\end{proof}

Combining this with \eqref{eq:alt gen bound 2} gives the following bound for representations associated to generalized Arthur parameters: 

%a bound in terms of the Langlands classification, useful in global applications:

\begin{cor}\label{cor:genboundrel}
For any $\eps > 0$, there is a uniform constant $C_\eps := C_{\eps, N, F}$ such that the following holds.  If $\psi$ is a generalized\footnote{i.e, in Arthur's set $\Psi^+$} Arthur parameter of the form
\[
\psi = \bigoplus_{i=1}^r \varphi_i[d_i]: WD_F \times \SL_2(\C) \to \GL_N(\C),
\]
where $\varphi_i$ are $L$-parameters corresponding to generic, unitarizable irreducible representations $\sigma_i$, then the representation $\pi$ corresponding to $\psi$ is irreducible and unitary, and 
\[
\dim(\pi^{K_\ell}) \leq C_\eps q^{\ell (d_{GK}(\pi) - \sum_{i=1}^r d_{GK}(\sigma_i)+\eps)} \prod_{i=1}^r \dim(\sigma_i^{K_\ell}). 
\]
\end{cor}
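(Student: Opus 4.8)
The plan is to reduce Corollary \ref{cor:genboundrel} to the Zelevinsky-classification bound \eqref{eq:alt gen bound 2} by identifying the Zelevinsky multisegment $M(\pi)$ and matching up the relevant GK-dimensions and fixed-vector counts. Write $\sigma_i = \bigtimes_j L(\langle a_{ij}, b_{ij}\rangle_{\rho_{ij}})$ for the Langlands data of the generic, unitarizable $\sigma_i$; since $\sigma_i$ is generic, each $\St$-block is actually a supercuspidal (segments of length $1$), so $\sigma_i$ is a full unlinked induction of supercuspidals. The parameter $\psi = \bigoplus_i \varphi_i[d_i]$ then has $\pi = \bigoplus_i |\cdot|^0 \sigma_i[\,\cdot\,][d_i]$ in the notation of \eqref{eq:Lform}--\eqref{eq:Arthur type as Langlands Quotient} (with trivial first $\SL_2$, i.e.\ $a$-parameter $1$), which is Arthur-type, hence unitarizable and irreducible by Theorem \ref{thm:unitarizable} and Corollary \ref{cor:unitarizableisfullinduction}. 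By Aubert--Zelevinsky duality (Proposition \ref{prop:Atypedual} and Theorem \ref{thm:dualsegment}), the Zelevinsky data $M(\pi) = \check M(\check\pi)$ is obtained by swapping the two $\SL_2$'s, so each generic constituent $\rho_{ij}$ of $\sigma_i$ contributes a Zelevinsky segment of length exactly $d_i$ (a Speh block $\Speh(d_i,\rho_{ij})$). Thus $M(\pi)$ consists, for each $i$ and each $j$, of a segment $\langle a, b\rangle_{\rho_{ij}}$ with $b - a + 1 = d_i$.

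First I would make the above correspondence precise: decompose $\varphi_i = \bigoplus_j \varphi_{ij}$ into its supercuspidal constituents (so $\varphi_{ij} \leftrightarrow \rho_{ij}$ on $\GL_{n_{ij}}$, and $\sigma_i = \bigtimes_j \rho_{ij}$), observe $\psi = \bigoplus_{i,j} \varphi_{ij}[d_i]$, and identify $\pi$ with the Arthur-type representation $\bigoplus_{i,j}\rho_{ij}[1][d_i]$. Applying Proposition \ref{prop:Atypedual} gives $\check\pi = \bigoplus_{i,j}\rho_{ij}[d_i][1]$, whose Langlands multisegment is a disjoint union of single-length segments, hence by Theorem \ref{thm:dualsegment} $M(\pi) = \check M(\check\pi)$ is the multisegment $[\langle a_{ij}^\star, b_{ij}^\star\rangle_{\rho_{ij}}]$ with each segment of length $k_{ij} = d_i$. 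Now \eqref{eq:alt gen bound 2} applies directly with these data: since $\pi \hookleftarrow Z(M(\pi)) \subseteq \bigtimes |\cdot|^{\star}\Speh(d_i,\rho_{ij})$, we get
\[
\dim(\pi^{K_\ell}) \leq C_\eps q^{\ell(d_{GK}(\pi) - \sum_{i,j} d_i\, d_{GK}(\rho_{ij}) + \eps)}\prod_{i,j}\bigl(\dim(\rho_{ij}^{K_\ell})\bigr)^{d_i}.
\]

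Second I would convert the right-hand side from the supercuspidal pieces $\rho_{ij}$ to the generic pieces $\sigma_i$. By Lemma \ref{lem:inductionbound} applied to $\sigma_i = \bigtimes_j \rho_{ij}$, we have $\prod_j \dim(\rho_{ij}^{K_\ell}) \asymp q^{-\frac12(n_i^2 - \sum_j n_{ij}^2)}\dim(\sigma_i^{K_\ell})$ up to uniform constants, where $n_i = \sum_j n_{ij}$ is the dimension of $\sigma_i$; and by Corollary \ref{cor:ASL2toGKdim} (or Corollary \ref{cor:gkdimform}) applied to $\sigma_i$, which is generic, $d_{GK}(\sigma_i) = \frac12(n_i^2 - \sum_j n_{ij}^2) = \frac12 n_i(n_i - 1) - \sum_j d_{GK}(\rho_{ij})$ — wait, more cleanly: $d_{GK}(\rho_{ij}) = \frac12 n_{ij}(n_{ij}-1)$ and $d_{GK}(\sigma_i) = \frac12 n_i(n_i-1)$, so $\frac12(n_i^2 - \sum_j n_{ij}^2) = d_{GK}(\sigma_i) - \sum_j d_{GK}(\rho_{ij}) \cdot$ (adjusting the linear terms, which cancel since $\sum_j n_{ij} = n_i$). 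Raising the Lemma \ref{lem:inductionbound} comparison to the $d_i$-th power and substituting, the exponent $-\sum_{i,j} d_i d_{GK}(\rho_{ij})$ combines with $-\sum_i \frac{d_i}{?}\cdots$ — I would carefully track that the powers of $q$ and the sum $\sum_i d_i\bigl(d_{GK}(\sigma_i) - \sum_j d_{GK}(\rho_{ij})\bigr)$ telescope to leave exactly $-\sum_i d_{GK}(\sigma_i)$; this is forced because $d_{GK}(\pi)$, computed via Corollary \ref{cor:ASL2toGKdim} from $A(\pi) = [d_i^{(n_{ij})}]$, already builds in the block structure. The irreducibility and unitarity of $\pi$ are immediate from the Arthur-type identification.

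The main obstacle I anticipate is purely bookkeeping: matching the exponents in \eqref{eq:alt gen bound 2} (phrased over supercuspidals, with a $\sum k_i d_{GK}(\rho_i)$ correction) against the desired bound (phrased over generic $\sigma_i$, with a $\sum d_{GK}(\sigma_i)$ correction), using Lemma \ref{lem:inductionbound} to bridge $\dim(\sigma_i^{K_\ell})$ and $\prod_j\dim(\rho_{ij}^{K_\ell})$, and checking that the net power of $q$ coming from the induction comparison is exactly absorbed — this requires the identity $d_{GK}(\pi) = \frac12(N^2 - \sum_i n_i d_i^2\, \cdot \,?)$ to be compared against $\sum_i d_{GK}(\sigma_i)$ and the $\Speh$-block exponents, and it is easy to drop a factor. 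There are no conceptual difficulties beyond what is already in \eqref{eq:alt gen bound 2} and the duality dictionary; the content is entirely in setting up the correspondence $\psi \leftrightarrow M(\pi)$ correctly and then doing the arithmetic of GK-dimensions.
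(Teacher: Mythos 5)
There is a genuine gap at the very first step. You claim that because $\sigma_i$ is generic, ``each $\St$-block is actually a supercuspidal (segments of length $1$), so $\sigma_i$ is a full unlinked induction of supercuspidals.'' This is false: genericity forces the \emph{Zelevinsky} segments of $\sigma_i$ to have length one (equivalently $A(\sigma_i)=[1,\dotsc,1]$), not the Langlands segments. A generic unitarizable representation has the form $\sigma_i = \bigtimes_j |\cdot|^{x_{i,j}}\St(a_{i,j},\rho_{i,j})$ with the $a_{i,j}$ arbitrary --- the Steinberg representation itself is generic, unitary, and not an induction of supercuspidals. Consequently your identification of $\pi$ with $\bigoplus_{i,j}\rho_{ij}[1][d_i]$ is wrong whenever some $a_{i,j}>1$ (it names a different representation, attached to a different parameter), and the Zelevinsky data of the true $\pi$ consists of $a_{i,j}$ segments of length $d_i$ for each $\rho_{i,j}$, not one.

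This is not mere bookkeeping: with the correct structure, applying \eqref{eq:alt gen bound} yields a bound involving $\prod_{i,j}\bigl(\dim(\rho_{i,j}^{K_\ell})\bigr)^{a_{i,j}}$, and to convert this into $\prod_i \dim(\sigma_i^{K_\ell})$ one needs a uniform \emph{lower} bound $\dim(\St(a,\rho)^{K_\ell}) \geq C\, q^{\frac12 \ell n^2 a(a-1)}\bigl(\dim(\rho^{K_\ell})\bigr)^{a}$ --- this is Lemma \ref{lem:steinberglower}, which the paper proves precisely for this purpose and which is absent from your argument. No upper bound on supercuspidal fixed vectors can substitute for it, since there is no uniform lower bound on $\dim(\rho^{K_\ell})$ over all supercuspidals. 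Two smaller points: even in your special case you should use \eqref{eq:alt gen bound} rather than \eqref{eq:alt gen bound 2}, since the target has each $\sigma_i$ to the first power (reducing the exponent from $d_i$ to $1$ requires an extra invocation of Lemma \ref{lem:supercuspidal bound}); and you do not justify that the representation attached to $\psi$ is the full induction of the pieces $|\cdot|^{x_{i,j}}\rho_{i,j}[a_{i,j}][d_i]$ when the twists $x_{i,j}$ are nonzero, which the paper handles by sorting the twists and citing Mok together with Corollary \ref{cor:unitarizableisfullinduction}.
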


\begin{proof}
By Corollary \ref{cor:ASL2toGKdim} and Theorem \ref{thm:degenwhittaker}, genericity gives that each $A(\sigma_i) = [1, \ldots, 1]$.  Combined with Theorem \ref{thm:unitarizable} and Corollary \ref{cor:unitarizableisfullinduction}, this implies that $\sigma_i$ is of the form
\[
\sigma_i = |\cdot|^{x_{i,1}} \St(a_{i,1}, \rho_{i,1}) \times \cdots \times |\cdot|^{x_{i,s_i}}\St(a_{i, s_i}, \rho_{i, s_i}).
\]
If $\eta_{i,j}$ is the Langlands parameter corresponding to $\rho_{i,j}$, this implies that
\[
\varphi_i = \bigoplus_{j=1}^{s_i} |\cdot|^{x_{i,j}} \eta_{i,j}[a_{i,j}].
\]
Let the set of distinct $x_{i,j}$'s be $X_1 > \ldots > X_t$, and for each $X_k$ let $\psi_k$ be the sum of the $\eta_{i,j}[a_{i,j}][d_i]$ with $X_k = x_{i,j}$, so that
\[
\psi = \bigoplus_{k=1}^t |\cdot|^{X_k} \psi_k.
\]
If $\psi_k$ corresponds to the irreducible representation $\pi_k$, then \cite[p. 27--28]{Mok} gives that
\[
\pi = |\cdot|^{X_1} \pi_1 \times \ldots \times |\cdot|^{X_k} \pi_k.
\]
Corollary \ref{cor:unitarizableisfullinduction} implies that each $\pi_i$ is the full induction of some subset of the $\rho_{i,j}[a_{i,j}][d_i]$, and hence that $\pi$ is the full induction of all of the $|\cdot|^{x_{i,j}} \rho_{i,j}[a_{i,j}][d_i]$ in some order.  Another application of Corollary \ref{cor:unitarizableisfullinduction} and \cite[Thm 1.9]{Zel80} gives that $\pi$ is irreducible and unitary, and the induction is independent of the ordering of $|\cdot|^{x_{i,j}} \rho_{i,j}[a_{i,j}][d_i]$.

We can then apply equation \eqref{eq:alt gen bound}, which gives:
\begin{align*}
\dim(\pi^{K_\ell}) &\leq C_\eps q^{\ell (d_{GK}(\pi) - \sum_{i=1}^r \sum_{j=1}^{s_i} a_{i, j} d_{GK}(\rho_{i,j})+\eps)} \prod_{i=1}^r \prod_{j=1}^{s_i} \lf( \dim(\rho_{i,j}^{K_\ell}) \ri)^{a_{i,j}} \\
&\leq C_\eps q^{\ell (d_{GK}(\pi) - \sum_{i=1}^r \sum_{j=1}^{s_i}  d_{GK}(\St(a_{i,j}, \rho_{i,j}))+\eps)} \prod_{i=1}^r \prod_{j=1}^{s_i} \dim(\St(a_{i,j}, \rho_{i,j})^{K_\ell})
\end{align*}
by Lemma \ref{lem:steinberglower} and Corollary \ref{cor:ASL2toGKdim} to compute GK-dimensions in the exponent. The result then follows using Lemma \ref{lem:inductionbound} to collapse over $j$, noting that each $\sigma_i$ is generic. 
\end{proof}
\begin{rmk}
The above result is motivated by the problem of counting non-tempered automorphic forms on unitary groups as the split level varies. If 
\[
\psi = \bigoplus_{i=1}^r \tau_i[d_i]
\]
is a global Arthur parameter, then the $\tau_i$ are cuspidal automorphic representations of $\GL_N$ so their local components $\tau_{i,v}$ are generic and unitary. In particular, at any prime $v$, the localization
\[
\psi_v = \bigoplus_{i=1}^r \tau_{i,v}[d_i]
\]
corresponds to an irreducible representation of the form in Corollary \ref{cor:genboundrel}. (The Ramanujan conjecture would further guarantee that the $\tau_i$ are tempered.) 

In \cite[Conj 9.6.3]{DGG22}, the first two authors make a conjecture about an optimal upper bound for these counts in terms of the global Arthur-$\SL_2$. Corollary~\ref{cor:genboundrel} combined with the arguments of \cite{DGG22} or \cite{MS19} is enough to prove this conjecture. It therefore implies an upper bound with the expected optimal exponent on the growth of counts of automorphic forms on unitary groups with fixed cohomological component at infinity%and some split level as the level goes to infinity
---see \cite[Thm 11.4.2]{DGG22} and the discussion afterwards. This will be explained in more detail in a future work. 
\end{rmk}

\section{Matrix Coefficient Decay and Uncertainty}\label{sec:uncertainty}

In this final section, we show that the matrix coefficient decay of a unitarizable representation $\pi$ is determined by its augmented Arthur-$\SL_2$, and use this result to compare it to the GK-dimension of $\pi$.

\subsection{Exponents}
Following Casselman \cite{casselman1995introduction}, the matrix coefficient decay of a representation is determined by a key set of invariants called \emph{exponents}.
%We first compute a key set of invariants for unitarizable irreducibles---their \emph{exponents}. 

\subsubsection{Definition and Properties}

%We first need some combinatorial notation:

%\begin{dfn}\label{def: sigma_i}
%Given a sequence $a = (a_1, \dotsc, a_n)$ of real numbers: 
%\begin{itemize}
%\item Let $\overline a  := (a_i - s)_i$ for the unique value $s \in \R$ such that $\sum_i (a_i-s) = 0$. One can view $\bar{a}$ as the translate of $a$ ``centered around zero".
%\item Let $\sigma_i(a)= \sum_{j\leq i} a_i$ (where out-of-bounds elements are treated as $0$ for indexing purposes). 
%\item Let $\bar \sigma_i(a) := \sigma_i(\bar a) = \sigma_i(a)-s i$.
%\item Given another sequence $a' = (a'_1, \dotsc, a'_n)$, we say that $a$ is dominated by $a'$, or $a \preceq a'$, if for all $i$, $\sigma_i(a) \leq \sigma_i(a')$. 
%\end{itemize}
%Given a multiset $\Xi$ of real numbers, we can abuse notation and treat it as a sequence in non-increasing order and similarly define the above notions. 
%\end{dfn}

%We next recall a notion from by Casselman  \cite{casselman1995introduction} and Harish-Chandra \cite[change citation]{HC99admissible}.

\begin{dfn}[\cite{casselman1995introduction,HC73}]
Fix a minimal parabolic $P_0$ of $\GL_N$. Let $\pi$ be a smooth irreducible representation of $G = \GL_N(F)$. An \emph{exponent} $\om$ of $\pi$ is a central character of an irreducible subquotient of the normalized Jacquet functor $\mc R^G_M(\pi)$ associated to the Levi factor $M$ of a standard parabolic $P$. 
\end{dfn}

The character $\omega$ defined in this way is the central character of the Levi subgroup $M = \prod_j \GL_{N_j}(F)$ which we can decompose as
\[ 
\omega = \boxtimes_j\omega_j, \quad \omega_j = \phi_j|\det|^{\alpha_j} : Z_{\GL_{N_j}(F)} \to \C^\times
\]
for $\phi_j$ unitary and $\alpha_j \in \R$. Since $P_0$ determines an ordering of the blocks of $M$, we abuse notation and also denote by $\om$ the ordered $N$-tuple of numbers
\[ 
\om = \left(\alpha_j^{(N_j)}\right)_j. 
\]
This is normalized so that the unramified character on the torus determined by this list of numbers restricts on $Z_M$ to the ``unramified part'' of $\om$ as above. 

In order to compute with exponents, we introduce some combinatorial notation. 

\begin{dfn}\label{def: sigma_i}
Given a sequence $a = (a_1, \dotsc, a_n)$ of real numbers: 
\begin{itemize}
%\item Let $\overline a  := (a_i - s)_i$ for the unique value $s \in \R$ such that $\sum_i (a_i-s) = 0$. One can view $\bar{a}$ as the translate of $a$ ``centered around zero".
\item Let $\sigma_i(a)= \sum_{j\leq i} a_j$ (where out-of-bounds elements are treated as $0$ for indexing purposes). 
%\item Let $\bar \sigma_i(a) := \sigma_i(\bar a) = \sigma_i(a)-s i$.
\item Given another sequence $a' = (a'_1, \dotsc, a'_n)$, we say that $a$ is dominated by $a'$, or $a \preceq a'$, if for all $i$, $\sigma_i(a) \leq \sigma_i(a')$. 
\end{itemize}
Given a multiset $\Xi$ of real numbers, we can abuse notation and treat it as a sequence in non-increasing order and similarly define the above notions. 
\end{dfn}

\subsubsection{Minimal Exponent}
When introducing the Langlands classification in Definition \ref{def: mainexponent}, we defined a so-called character $\Xi(\pi)$. It is readily computed for Arthur-type and unitarizable $\pi$.
%We will be comparing exponents of $\pi$ to the character $\Xi(\pi)$ defined from the Langlands classification . It is straightforward to compute this for Arthur-type and unitary representations:

\begin{lem}\label{lem:charactercomputation}
Let $\pi$ be an Arthur-type irreducible representation of $\GL_N(F)$ with Arthur-$\SL_2$
\[
A(\pi) = [d_1, \dotsc, d_s].
\]
Then $\pi$ has character
\[
\Xi(\pi) = \bigsqcup_{i=1}^s \lf[ \f{d_i-1}2, \f{d_i-3}2, \dotsc, \f{-d_i+1}2\ri]. 
\]
If $\pi$ is only unitarizable with augmented Arthur-$\SL_2$ 
\[
A^u(\pi) = [(d_1, x_1), \dotsc, (d_s, x_s)],
\]
then $\pi$ has character 
\[
\Xi(\pi) = \bigsqcup_{i=1}^s \lf[ x_i + \f{d_i-1}2, x_i + \f{d_i-3}2, \dotsc, x_i + \f{-d_i+1}2\ri]. 
\]
In particular, the character $\Xi(\pi)$ is determined by the augmented Arthur-$\SL_2$ of $\pi$.
\end{lem}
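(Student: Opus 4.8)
The statement is essentially a bookkeeping computation: one needs to unwind the definition of the character $\Xi(\pi)$ (Definition \ref{def: mainexponent}) when $\pi$ is of the form $\bigoplus_i |\cdot|^{x_i}\rho_i[a_i][d_i]$ and check that the multiset of exponents collapses to the claimed union of arithmetic progressions. The plan is to first handle the Arthur-type case ($x_i = 0$ for all $i$) and then observe that the general unitarizable case follows by an overall twist, since replacing $\rho_i[a_i][d_i]$ by $|\cdot|^{x_i}\rho_i[a_i][d_i]$ shifts every relevant $(a+b)/2$ by $x_i$.

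\textbf{Step 1: Unwind the Langlands multisegment.} Recall from \eqref{eq:Arthur type as Langlands Quotient} that for a single block $\rho_i[a_i][d_i]$ (taking $x_i = 0$ for now), the associated piece of the Langlands multisegment $\check M$ consists, for $j = 1, \dotsc, d_i$, of the segment
\[
m_{i,j} := \lf\langle \f{d_i - 2j + 1}{2} + \f{1 - a_i}{2},\ \f{d_i - 2j + 1}{2} + \f{a_i - 1}{2} \ri\rangle_{\rho_i}.
\]
The midpoint of this segment is $(a_{i,j} + b_{i,j})/2 = (d_i - 2j + 1)/2$, and the segment has length $a_i$ (i.e. $b_{i,j} - a_{i,j} + 1 = a_i$), with $\rho_i$ living on $\GL_{N_i}$. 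By Definition \ref{def: mainexponent}, this block therefore contributes to $\Xi(\pi)$ the value $(d_i - 2j + 1)/2$ with multiplicity $N_i \cdot a_i$, for each $j = 1, \dotsc, d_i$.

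\textbf{Step 2: Identify the arithmetic progression.} As $j$ ranges over $1, \dotsc, d_i$, the midpoint $(d_i - 2j + 1)/2$ ranges over $\f{d_i - 1}{2}, \f{d_i - 3}{2}, \dotsc, \f{-d_i + 1}{2}$ — exactly the progression in the statement. So the block $\rho_i[a_i][d_i]$ contributes $N_i a_i$ copies of this entire length-$d_i$ progression. Now invoke the definition of the Arthur-$\SL_2$: $A(\pi) = [d_1^{(N_1 a_1)}, \dotsc, d_r^{(N_r a_r)}]$, which when listed out as $[d_1, \dotsc, d_s]$ (with $s = \sum_i N_i a_i$) has exactly $N_i a_i$ copies of $d_i$. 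Hence summing the contributions of all blocks, with the correct multiplicities, gives precisely $\Xi(\pi) = \bigsqcup_{i=1}^s [\f{d_i - 1}{2}, \dotsc, \f{-d_i + 1}{2}]$, matching the claim. (One should double-check the cardinality: each progression has $d_i$ terms, so $|\Xi(\pi)| = \sum_{i=1}^s d_i = |A(\pi)| = N$, consistent with $\Xi(\pi)$ being a multiset of size $N$.)

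\textbf{Step 3: The unitarizable case.} For general unitarizable $\pi = \bigoplus_i |\cdot|^{x_i}\rho_i[a_i][d_i]$, inspection of \eqref{eq:Arthur type as Langlands Quotient} shows each segment midpoint is simply shifted by $x_i$: the block $|\cdot|^{x_i}\rho_i[a_i][d_i]$ contributes $N_i a_i$ copies of $[x_i + \f{d_i - 1}{2}, \dotsc, x_i + \f{-d_i + 1}{2}]$. Recording the data $A^u(\pi) = [(d_1, x_1), \dotsc, (d_s, x_s)]$ (with the same multiplicity convention) gives the second displayed formula, and the final sentence is then immediate since $\Xi(\pi)$ has been written purely in terms of the pairs $(d_i, x_i)$. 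I do not expect a serious obstacle here; the only mild subtlety is matching the indexing/multiplicity conventions between \eqref{eq:Arthur type as Langlands Quotient}, Definition \ref{def: mainexponent}, and the definition of $A(\pi)$/$A^u(\pi)$ carefully enough that the multiplicities $N_i a_i$ line up, and confirming the midpoint computation $\f{d_i - 2j+1}{2} + \f{1-a_i}{2} + \f{d_i-2j+1}{2} + \f{a_i-1}{2} = d_i - 2j + 1$ so that the average is $(d_i - 2j+1)/2$ independent of $a_i$.
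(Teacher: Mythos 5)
Your computation is correct and is exactly the routine unwinding of Definition \ref{def: mainexponent} applied to the multisegment \eqref{eq:Arthur type as Langlands Quotient} that the paper intends — indeed the paper states this lemma without proof, treating it as immediate from those definitions. The only quirk worth noting is that the paper's definition writes the augmented Arthur-$\SL_2$ with pairs $(x_i,d_i)$ while the lemma writes $(d_i,x_i)$; your handling of the multiplicities $N_i a_i$ and the midpoint computation are both right.
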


\begin{conv}
As convention, $\Xi(\pi)$ will also denote its rearrangement in non-increasing order. For unitarizable $\pi$, note that $-\Xi(\pi)$ is the same list ordered non-decreasingly.  
\end{conv}

%Bernstein's Geometric Lemma \cite[pg 448]{BZ77} gives us a way to compute sets of exponents appearing in $\mc R^G_L \mc I^G_M \sigma$, which we state only in the case of $L \subset M$ for simplicity: 
%\begin{lem}
%Let $\sigma = \sigma_1 \boxtimes \cdots \boxtimes \sigma_r$ be a smooth representation of a Levi $M$ of $\GL_N(F)$ with $\omega_i$ an exponent of $\sigma_i$, viewed as a an ordered list of numbers, and let 
%\[ 
%\omega_0 = \bigsqcup_{i=1}^r \omega_i.
%\] 
%Let $L \subset M$ be another standard Levi subgroup, and \[ W^{M,L}=\text{whatever it is}. \] Then every ordered list of numbers of the form \[ \omega = w(\omega_0), \qquad w\in W^{M,L} \] is an exponent of $\sigma_1 \times \cdots \times \sigma_r$. 
%\end{lem}

%\begin{proof}
%The Geometric Lemma decomposes $\mc R^G_L \mc I_M^G \sigma$ into subquotients of the form $F_w(\sigma)$ for $w \in W^{M,L}$. Each $F_w$ is given by \[ F_w = \mc I_{L,L'} \circ w \circ \mc R_{M,M'} \] for $M',L'$ TBD. To deduce the proposition, we simply keep track of the effect of each of the three functors on the exponents. 
%\end{proof}

%\subsubsection{Minimal Exponent}
The next three results establish that $-\Xi(\pi)$ %introduced in \ref{def: mainexponent} 
is the minimal exponent of $\pi$ in the ordering $\prec$. We will next use this to show that $\Xi(\pi)$ determines the rate of decay of matrix coefficients of unitarizable representations. 

%For the next step, we will consider the interplay of two realizations of $\pi$ as a quotient: 
%\begin{itemize}
%    \item On the one hand, by \ref{cor:unitarizableisfullinduction}, we have \[ \pi = \pi_1 \times ... \times \pi_r, \qquad \pi_i = |\cdot |^{x_i}\rho_i[a_i][d_i]. \] We denote by $M = M_1 \times ... \times M_i$ the Levi subgroup of $G$ such that $\pi_1 \boxtimes ... \boxtimes \pi_r$ is a representation of $M$. 
%    \item Each of the factors $M_i$ is of the form $M_i \simeq \GL_{n_ia_id_i}$, and itself has a Levi subgroup of the form \[N_i = \prod_{j=1}^{d_i}N_i^j := \prod_{j=1}^{d_i}GL_{n_ia_i}.\] 
%\end{itemize}

%In the first part of the argument, we work individually on each of the factors of $M$, and show that the desired portion of our exponent appears in the Jaquet functor $\mc R^{M_i}_{N_i}$.  

\begin{lem}\label{lem:simpleexponents}
Let $\rho$ be a supercuspidal representation of $\GL_m(F)$. Then the ordered list
\begin{equation}\label{eq:intermediate exp}
\lf(\lf(\f{-d + 1}2\ri)^{(ma)}, \dotsc, \lf(\f{d-1}2\ri)^{(ma)} \ri)
\end{equation}
is an exponent of $\pi : = \rho[a][d]$.
\end{lem}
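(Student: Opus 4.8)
The plan is to compute an exponent of $\pi = \rho[a][d]$ directly from its Langlands data using the multisegment \eqref{eq:Arthur type as Langlands Quotient} (with $x_i = 0$, single $\rho$, single pair $(a,d)$), together with the fact that the minimal exponent in the ordering $\prec$ is realized by the character $\Xi$. First I would recall from Definition~\ref{def: mainexponent} and Lemma~\ref{lem:charactercomputation} that the character $\Xi(\rho[a][d])$ equals the multiset $[(\frac{d-1}{2})^{(ma)}, (\frac{d-3}{2})^{(ma)}, \dots, (\frac{-d+1}{2})^{(ma)}]$: indeed $A(\rho[a][d]) = [d^{(ma)}]$ since $N = mad$, so by Lemma~\ref{lem:charactercomputation} the character is $ma$ copies of the arithmetic progression $[\frac{d-1}{2}, \frac{d-3}{2}, \dots, \frac{-d+1}{2}]$. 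The list \eqref{eq:intermediate exp} in the statement is precisely $-\Xi(\pi)$, i.e. this same multiset written in non-decreasing order; so what must be shown is that $-\Xi(\pi)$ occurs as a genuine exponent of $\pi$, not merely as a bound.

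The key step is to exhibit a Levi subgroup $M$, a standard parabolic $P$, and an irreducible subquotient of $\mc R^G_M(\pi)$ whose central character is the one encoded by $-\Xi(\pi)$. The natural choice is to take $M = \GL_m(F)^{\times ad}$, the block-diagonal Levi with $ad$ copies of $\GL_m$, and to extract the appropriate Jacquet module. Concretely, $\rho[a][d]$ embeds (Zelevinsky side) or surjects from (Langlands side) an induced representation built from twists $|\cdot|^{t}\rho$ of $\rho$, where the exponents $t$ range exactly over the multiset $[\frac{d-1}{2}, \dots, \frac{-d+1}{2}]$ each with multiplicity $a$ — this is visible from \eqref{eq:Arthur type as Langlands Quotient}, where the $d$ segments are $\langle \frac{d-2j+1}{2} + \frac{1-a}{2},\ \frac{d-2j+1}{2} + \frac{a-1}{2}\rangle_\rho$ for $j = 1,\dots,d$, and each such segment of length $a$ contributes the twists $|\cdot|^{s}\rho$ for $s$ in a length-$a$ block centered at $\frac{d-2j+1}{2}$, whose ``center of mass'' is $\frac{d-2j+1}{2}$. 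Reindexing, the full list of centers-of-mass, taken block by block, is $a$ copies of $\frac{d-1}{2}$, then $a$ copies of $\frac{d-3}{2}$, and so on. By exactness of the Jacquet functor and Frobenius reciprocity (Casselman, as in \cite{casselman1995introduction}), the cuspidal support of $\pi$ along $M$ contains the constituent $|\cdot|^{t_1}\rho \boxtimes \cdots \boxtimes |\cdot|^{t_{ad}}\rho$ for some ordering of the $t_\bullet$, and its central character, read off as an ordered $N$-tuple after grouping each $\GL_m$ block as $m$ equal entries, is exactly \eqref{eq:intermediate exp} once we choose the ordering to be non-decreasing. To pin down that this particular ordering is attained — rather than some permutation — I would invoke the combinatorics of the Langlands quotient: among the exponents of $\pi$, the character $\Xi(\pi)$ (equivalently $-\Xi(\pi)$ reordered) is the extreme one in the dominance order $\prec$ of Definition~\ref{def: sigma_i}, and it is genuinely realized because the Langlands data was constructed from an ordered multisegment; this is standard for $\GL_N$ (cf.\ the theory underlying Theorem~\ref{thm:langlandsclassification} and \cite{casselman1995introduction}).

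The main obstacle I anticipate is the bookkeeping that translates the central character of a Jacquet-module constituent into the specific ordered tuple \eqref{eq:intermediate exp} with the correct multiplicities: one has to be careful that (i) each segment of \eqref{eq:Arthur type as Langlands Quotient} genuinely contributes, to the $M$-Jacquet module, a factor whose unramified twist parameter on $Z_{\GL_m}$ is the segment's midpoint $\frac{a+b}{2}$ with the stated multiplicity $m$ per block, and $a$ blocks sharing each midpoint value; and (ii) the ordering can be taken non-decreasing, which on the Langlands side corresponds to a specific chamber and follows from the ordered-multisegment normalization. I expect steps involving only the arithmetic progressions and the computation $\frac{d-2j+1}{2}$ for $j=1,\dots,d$ giving exactly $\{\frac{d-1}{2}, \frac{d-3}{2}, \dots, \frac{-d+1}{2}\}$ to be routine, and I would not grind through them in detail; the substantive point is identifying the right Levi and citing the realization of the extreme exponent.
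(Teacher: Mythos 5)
Your proposal has a genuine gap, and it is located exactly at the step you flag as ``bookkeeping.'' You take the fine Levi $M = \GL_m(F)^{\times ad}$ and claim that the cuspidal support of $\pi$ along $M$ yields the constituent $|\cdot|^{t_1}\rho \boxtimes \cdots \boxtimes |\cdot|^{t_{ad}}\rho$ with the $t_\bullet$ equal to $a$ copies of each midpoint $\tfrac{d-2j+1}{2}$. That is false: the exponents of $\pi$ with respect to $M$ are governed by the supercuspidal support, which is the multiset of twists actually occurring in the segments, i.e.\ $\{\tfrac{d-2j+1}{2} + \tfrac{1-a}{2} + k : 1 \le j \le d,\ 0 \le k \le a-1\}$, not the midpoints with multiplicity $a$. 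For instance with $a = d = 2$ the supercuspidal support exponents are $\{-1,0,0,1\}$, whereas \eqref{eq:intermediate exp} asks for $\{-\tfrac12,-\tfrac12,\tfrac12,\tfrac12\}$; no ordering of the cuspidal support produces the desired tuple. The list \eqref{eq:intermediate exp} is an exponent only with respect to the \emph{coarser} Levi $L$ with blocks $((ma)^{(d)})$: each $\GL_{ma}$-block carries an irreducible subquotient whose central character twist is the segment midpoint, and this is obtained in the paper by first producing an explicit exponent with respect to $M$ via Theorem~7 of \cite{LM14} (the ``costandard'' labeling of the ladder graph of $\rho[a][d]$) and then restricting the central character from $Z_M$ to $Z_L$, which averages the entries over each block of $L$. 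Identifying the correct Levi and performing this averaging is the content of the lemma, not a routine reindexing.

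A second problem is your appeal to extremality in the dominance order to ``pin down the ordering.'' Lemma~\ref{lem: dominant exponent} only gives that every exponent dominates $w_l(\Xi(\pi))$; it does not assert that this bound is attained, and the attainment (Proposition~\ref{prop: -xi is an exp}) is proved in the paper \emph{using} the present lemma, so invoking it here would be circular. Even granting the correct Levi, one must still justify that the particular subquotient realizing the non-decreasing ordering actually occurs in the Jacquet module --- for a general irreducible representation not every permutation of the inducing data appears --- and that is precisely what the explicit Lapid--M\'inguez computation of Jacquet modules of ladder representations supplies.
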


\begin{proof}
Let $N = mad$, and let $M$ (resp. $L$) be the Levi factor of the standard parabolic for $\GL_N(F)$ determined by the ordered list $(m^{(ad)})$ (resp. $((ma)^{(d)})$). We will realize \eqref{eq:intermediate exp} as an exponent with respect to $L$. 

Theorem 7 of \cite{LM14} computes all the exponents of $\pi$ with respect to $M$ in terms of certain labelings $\phi$ of a ladder graph $\mc E(\pi)$. In particular, in the notation of \cite[\S3.3]{LM14}, the ``costandard'' labeling given by
\[
\begin{tikzpicture}[every node/.style={font=\tiny}]
% ------------------------------------------------------------------
% nodes -------------------------------------------------------------
% bottom row
\node[label = $a$] (A0) at (0,0)   {$\bullet$};
\node[label = $a-1$] (A1) at (1.5,0)   {$\bullet$};
\node[label = $2$] (A2) at (3,0)   {$\bullet$};
\node[label = $1$] (A3) at (4.5,0)   {$\bullet$};

% middle row one
\node[label = $2a$] (B0) at (1.5,1.5) {$\bullet$};
\node[label = $2a-1$] (B1) at (3,1.5) {$\bullet$};
\node[label = $a+2$] (B2) at (4.5,1.5) {$\bullet$};
\node[label = $a+1$] (B3) at (6,1.5){$\bullet$};

% top row 
\node[label = $da$] (D0) at (3,3) {$\bullet$};
\node[label = $da-1$] (D1) at (4.5,3) {$\bullet$};
\node[label = $(d-1)a + 2$] (D2) at (6,3) {$\bullet$};
\node[label = right:$(d-1)a + 1$] (D3) at (7.5,3){$\bullet$};

% ------------------------------------------------------------------
% arrows and labels -------------------------------------------------
% vertical (really diagonal) towers
\draw[->] (A0) -- (B0);
\draw[dotted,-]  (B0) -- (D0);

\draw[->] (A1) -- (B1);
\draw[dotted,-]  (B1) -- (D1);

\draw[->] (A2) -- (B2);
\draw[dotted,-] (B2) -- (D2);

\draw[->] (A3) -- (B3);
\draw[dotted,-]  (B3) -- (D3);

% bottom horizontal chain
\draw[->]            (A0) -- (A1);
\draw[dotted,-]     (A1) -- (A2);
\draw[->,]           (A2)  -- (A3);
\draw[->]            (B0) -- (B1);
\draw[dotted,-]     (B1) -- (B2);
\draw[->,]           (B2)  -- (B3);
\draw[->]            (D0) -- (D1);
\draw[dotted,-]     (D1) -- (D2);
\draw[->,]           (D2)  -- (D3);
\end{tikzpicture}
\]
 gives the exponent
\[
E = \bigsqcup_{i=1}^d \lf( \lf(\f{a-1}2 + \f{-d - 1 + 2i}2\ri)^{(m)}, \dotsc, \lf(\f{-a + 1}2 + \f{-d - 1 + 2i}2\ri)^{(m)} \ri),
\]
concatenated without reordering. 

As explained \cite[\S 4.4]{casselman1995introduction}, restriction to $Z_L \subset Z_M$ of the central character of $M$ determined by $E$ gives rise to an exponent associated to $L$.
%restricted to $Z_L$ must correspond to an exponent of $\pi$ with respect to $L$. 
With our normalizations, this corresponds to averaging $E$ over the blocks of $L$, which gives 
\[
E_{\text{averaged}} = \bigsqcup_{i=1}^d \lf( \lf(\f{-d - 1 + 2i}2\ri)^{(ma)} \ri),
\]
as claimed.
\end{proof}

\begin{prop} \label{prop: -xi is an exp}
Let $\pi$ be a unitarizable irreducible representation of $\GL_N(F)$. Then $-\Xi(\pi)$ is an exponent of $\pi$. 
\end{prop}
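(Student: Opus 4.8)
The plan is to reduce the statement for a general unitarizable $\pi$ to the building-block case $\rho[a][d]$ handled in Lemma~\ref{lem:simpleexponents}, using the fact (Corollary~\ref{cor:unitarizableisfullinduction}) that a unitarizable $\pi$ is the full parabolic induction of its factors $|\cdot|^{x_i}\rho_i[a_i][d_i]$. First I would record the compatibility of exponents with parabolic induction: if $\pi = \pi_1 \times \cdots \times \pi_r$ with Levi $M' = \prod_i \GL_{N_i}(F)$, then by transitivity of the Jacquet functor any tuple obtained by concatenating exponents of the $\pi_i$ (with respect to standard parabolics of the $\GL_{N_i}$) is an exponent of $\pi$ with respect to the corresponding standard parabolic of $\GL_N$. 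This is the standard ``exponents of an induced representation'' statement from \cite{casselman1995introduction,LM14}; combined with the further averaging-over-blocks operation already used in the proof of Lemma~\ref{lem:simpleexponents} (restriction of central characters from a finer Levi to a coarser one), one gets that the concatenation of exponents of the pieces, suitably averaged, is again an exponent of $\pi$.

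Next I would apply Lemma~\ref{lem:simpleexponents} to each factor. For $|\cdot|^{x_i}\rho_i[a_i][d_i]$, twisting by $|\cdot|^{x_i}$ shifts every entry of the exponent from Lemma~\ref{lem:simpleexponents} by $x_i$, producing the list
\[
\lf(\lf(x_i + \f{-d_i+1}2\ri)^{(N_i a_i)}, \dotsc, \lf(x_i + \f{d_i-1}2\ri)^{(N_i a_i)}\ri).
\]
Concatenating these over all $i$ yields an exponent of $\pi$, and as a multiset this concatenation is exactly $-\Xi(\pi)$ by the formula for $\Xi(\pi)$ in Lemma~\ref{lem:charactercomputation} (with the sign flip matching the convention that $-\Xi(\pi)$ is $\Xi(\pi)$ listed non-decreasingly, which is the natural order in which Lemma~\ref{lem:simpleexponents} produces its entries). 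The only point needing care is that an exponent is genuinely an ordered tuple (a character of the torus), not just a multiset, so I must check that the particular ordering coming out of the concatenation is admissible — but since the statement ``$-\Xi(\pi)$ is an exponent'' is a statement about the associated multiset / central character and the subsequent results only use it through the domination order $\preceq$ on the rearranged multiset, it suffices to exhibit \emph{some} ordering, which the construction does.

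The main obstacle I anticipate is bookkeeping rather than conceptual: verifying that the averaging step (passing from the Levi $\prod_i \GL_{N_i}$ down to the minimal-parabolic torus, or to whichever Levi $L$ is used) is legitimate and produces precisely the claimed entries, and reconciling the various conventions (the normalization of exponents as tuples of $\alpha_j$'s in Definition~\ref{def: sigma_i}, the ordering induced by $P_0$, and the sign in $-\Xi$). In particular one must be slightly careful that $\rho_i$ is supercuspidal on $\GL_{N_i}$ with $N_i$ possibly $>1$, so each entry in Lemma~\ref{lem:simpleexponents} comes with multiplicity $N_i a_i$, not $a_i$ — this is exactly why $\Xi$ was defined with the weights $N_i(b_i - a_i + 1)$ in Definition~\ref{def: mainexponent}, and I would make sure the multiplicities line up. Once the conventions are pinned down, the proof is a short assembly of Lemma~\ref{lem:simpleexponents}, Corollary~\ref{cor:unitarizableisfullinduction}, and Lemma~\ref{lem:charactercomputation}.
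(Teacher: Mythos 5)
Your reduction to the building blocks is the same as the paper's: you invoke Corollary~\ref{cor:unitarizableisfullinduction} to write $\pi$ as the full induction of the $|\cdot|^{x_i}\rho_i[a_i][d_i]$, apply Lemma~\ref{lem:simpleexponents} (twisted by $x_i$) to each factor, and observe that the concatenation $E$ of these lists is an exponent of $\pi$ and agrees with $-\Xi(\pi)$ as a multiset. All of that is correct, and the transitivity step (the identity Weyl element in the geometric lemma gives $\mc R^M_L\sigma$ as a composition factor of $\mc R^G_L\,\mc I_M^G\sigma$) is legitimate.

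The gap is in your final paragraph, where you dismiss the ordering issue by claiming that ``$-\Xi(\pi)$ is an exponent'' is only a statement about the underlying multiset. It is not. An exponent is attached to a specific standard Levi $M_\Theta$ and is used downstream as an \emph{ordered} tuple: Lemma~\ref{lem: dominant exponent} compares exponents via the partial sums $\sigma_i$, and the lower bound $p(\pi)\ge p'$ in Proposition~\ref{prop:matrixdecay} evaluates $\widetilde\chi_{-\Xi(\pi)}\delta^{1/2-1/p'}$ at the elements $x_i$, whose membership in $A_\Theta^-$ depends on the block structure of the Levi to which the exponent is attached, and whose values are the partial sums of the ordered tuple. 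The non-decreasing ordering of $-\Xi(\pi)$ is exactly what makes $-\sigma_i(-\Xi(\pi))=\sigma_i(\Xi(\pi))$ the \emph{maximal} partial sums; with the raw concatenation ordering of $E$ the Casselman criterion need not fail at $p<p'$, and the sharpness of Proposition~\ref{prop:matrixdecay} would be lost. So you must actually show that the non-decreasing rearrangement of $E$ is itself an exponent. This is the real content of the paper's proof: it applies Bernstein's Geometric Lemma \cite[Lem 2.12]{BZ77} with a Weyl element $w$ satisfying $w(E)=-\Xi(\pi)$ and checks $w\in W^{M,L'}$ for a suitable Levi $L'$ --- which works precisely because the entries of $E$ are already non-decreasing within each block of $M$, so the sorting permutation only permutes whole blocks and never breaks one up. Your proof needs this step; without it the argument only produces \emph{some} ordering of the multiset, which is strictly weaker than the proposition as used.
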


\begin{proof}
Let
\[
\pi = \bigoplus_{i=1}^r |\cdot|^{x_i} \rho_i[a_i][d_i],
\]
with $\rho_i$ on $\GL_{N_i}(F)$. Let $M$ be the Levi factor of the standard parabolic $P_M$ corresponding to ordered list $(N_1 a_1 d_1, \dotsc, N_r a_r d_r)$ and $L$ the Levi corresponding to $P_L$ from $((N_1 a_1)^{(d_1)}, \dotsc, (N_r a_r)^{(d_r)})$. 
Let $\sigma$ be the representation
\[
\sigma = \boxtimes_{i=1}^r |\cdot|^{x_i} \rho_i[a_i][d_i],
\]
of $M$. By Corollary  \ref{cor:unitarizableisfullinduction}, $\pi = \mc I_M^{\GL_N(F)} \sigma$, so it suffices to construct $-\Xi(\pi)$ as an exponent of this full induction, for which we use Bernstein's Geometric Lemma.  

By Lemma \ref{lem:simpleexponents} applied to each factor of $M$, $\sigma$ has an exponent 
\[
E := \bigsqcup_{i=1}^r \bigsqcup_{j=1}^{d_i} \lf(\lf(\f{-d_i + 1 + 2j}2 + x_i\ri)^{(N_ia_i)} \ri)
\]
concatenated without reordering. Note that by Lemma \ref{lem:charactercomputation}, $-\Xi(\pi)$ is obtained from $E$ by reordering the entries so that they are non-decreasing. 

We freely use notions from \cite[\S 2]{BZ77}. To apply Bernstein's Geometric Lemma \cite[Lem 2.12]{BZ77}, we must exhibit an element $w$ of the Weyl group $W$ of $\GL_N$, and a Levi subgroup $L'$ such that $w$ belongs to a certain subset $W^{M,L'}$ and $w(E) = -\Xi(\pi)$. It will then follow that $-\Xi(\pi)$ appears as an exponent associated to $\mc R^G_{L'} \mc I_M^G \sigma$. 

The district entries of $E$ within each block of $M$ (indexed by $i$) are non-decreasing, so we can pick a $w$ such that $w(E) = -\Xi(\pi)$ with the property that it reorders the blocks of $N$ while preserving the relative ordering of the entries of $E$ in each block of $M$.  We define $P_{L'} = w(P_L)$ with Levi factor $L'$ -- this is the Levi factor for which we claim that $-\Xi(\pi)$ is an exponent. Since $w^{-1}(P_{L'})=P_L$, none of the blocks of $L'$ are broken up by applying $w^{-1}$ -- they are simply permuted. A fortiori, their interior order is preserved. This ``preservation of the inner ordering of the blocks" is precisely the criterion for belonging to $W^{M,L'}$ in \cite[\S 2.11]{BZ77}, so $w\in W^{M,L'}$.
%$E$ can be ordered non-increasingly by applying a permutation of $n$ defined by permuting the blocks of size $n_ia_i$ parameterized by the pairs $(i,j)$. Let $P_{N'} = w(P_N)$ with Levi factor $N'$ -- this is the Levi factor for which we claim that $\Xi(\pi)$ is an exponent. Note that $w$ takes $N$ to $N'$ by simply reordering the blocks. Also, $w$, despite breaking up the blocks of $M$ parameterized by $i$, preserves the relative ordering of the entries corresponding to each block. In total, $w \in W^{M,N'}$ as defined in \cite[\S2.11]{BZ77}.

Therefore, by \cite[Lem. 2.12]{BZ77}, the functor $\mc I^{L'}_{L'} \circ w \circ \mc R^M_L$ appears as a composition factor of $\mc R^{G}_{L'}\mc I_M^G$. Since $E$ is an exponent coming from $\mc R^M_L \sigma$ and $w(E) = -\Xi(\pi)$, we conclude. 
%$\mc I_M^{\GL_N(F)} \sigma$ has exponent $\Xi(\pi)$ with respect to $N'$. 
\end{proof}
%\blue{SCRATCH: }

%Goal: find $\Xi(\pi)$ among the exponents of $\pi$. Strategy: 
%\begin{itemize}
%    \item write $\pi \simeq \mc I_M^G \sigma$ for some representation $\sigma$ of some Levi $M$ and find $\Xi(\pi)$ among the exponents of $\mc I_M^G \sigma$. Follows from the Geometric Lemma. 
%    \item When $\pi$ is an irreducible Langlands induction, can choose $M$ to be the Steinberg levi, corresponding to the product of multisegments. 
%    \item In general, will use the Levi implicit in \ref{prop:unitaryinduction} (this agrees with the Langlands Levi in the case of a Langlands irreducible). When we have Arthur/Unitarizable representations, we can compute the exponents of the inducing data, and the aformentionned result implies that we all exponents aprpeaing in the induction will appear in the rep. 
%\end{itemize}

\begin{lem} \label{lem: dominant exponent}
Let $\pi$ be a smooth irreducible representation of $\GL_N(F)$. Then every exponent of $\pi$ dominates $w_l(\Xi(\pi))$, where  $w_l$ is the longest element in the Weyl group (i.e, reversing the order of sequences). 
\end{lem}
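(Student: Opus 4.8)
The plan is to reduce the statement to a claim about exponents of the building-block Langlands quotients $L(m_i)$ attached to the segments of $\check M(\pi)$, and then to a convexity argument. First I would recall that by the Langlands classification (Theorem \ref{thm:langlandsclassification}), $\pi$ is the unique irreducible quotient of $L(m_1) \times \cdots \times L(m_r)$ with the $m_i$ ordered, and that the Jacquet module of $\pi$ is a subquotient of the Jacquet module of this standard module. Since exponents of $\pi$ are central characters of irreducible subquotients of $\mc R^G_M(\pi)$ for standard Levis $M$, and these inject (as a subset, after semisimplification) into the set of such central characters for the full standard module $L(m_1) \times \cdots \times L(m_r)$, it suffices to prove the domination statement for the standard module. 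By Bernstein's Geometric Lemma \cite[Lem 2.12]{BZ77}, every exponent of the standard module has the form $w(E_1 \sqcup \cdots \sqcup E_r)$ where $E_i$ is an exponent of $L(m_i)$ (with respect to some standard Levi of the relevant $\GL$) and $w$ runs over a suitable set of Weyl elements; so I would first settle the case of a single segment.

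For a single segment $m = \langle a, b\rangle_\rho$ with $\rho$ supercuspidal on $\GL_{N_0}(F)$, the representation $L(m) = |\cdot|^{(a+b)/2}\St(b-a+1,\rho)$ is essentially square-integrable, so its Jacquet module along the parabolic with Levi $\GL_{N_0}^{\times(b-a+1)}$ is a single character, namely $(|\cdot|^a\rho, |\cdot|^{a+1}\rho, \dotsc, |\cdot|^b\rho)$ read in that order, and restricting/averaging to coarser Levis just averages consecutive blocks. The associated $N$-tuple of real parts is thus a rearrangement (with multiplicities $N_0$) of $a, a+1, \dotsc, b$, and every exponent of $L(m)$, after averaging over blocks, dominates the one that lists these in \emph{decreasing} order, i.e. $w_l$ of the non-increasing rearrangement: this is the elementary observation that averaging a block of a sequence and then asking for the partial-sum-minimal rearrangement forces the decreasing order, because partial sums of the decreasing rearrangement are the smallest possible. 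Note $\Xi(m) = [((a+b)/2)^{(N_0(b-a+1))}]$ is the constant multiset, whose non-increasing arrangement equals its non-decreasing arrangement, so $w_l(\Xi(m))$ is just the constant sequence; the partial sums of $a,a+1,\dots,b$ listed increasingly are $\le$ the partial sums of the constant sequence $(a+b)/2, \dots$, and this is exactly the required domination once one checks signs and the direction of $w_l$.

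Having handled a single segment, for the general standard module I would use the Geometric Lemma to write a general exponent as $w(E_1 \sqcup \cdots \sqcup E_r)$ with each $E_i$ an exponent of $L(m_i)$, observe (from the single-segment case) that each $E_i$, after the relevant averaging, dominates $w_l(\Xi(m_i))$ in the partial-sum order \emph{restricted to that block}, and then prove the key combinatorial lemma: if $a^{(i)} \preceq b^{(i)}$ blockwise (with $b^{(i)}$ the non-increasing rearrangement of a constant multiset) then the non-decreasing rearrangement of $\sqcup_i a^{(i)}$ is dominated by $w_l$ of the non-increasing rearrangement of $\sqcup_i b^{(i)} = \Xi(\check M)$. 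Since $w_l(\Xi(\pi))$ is precisely the non-decreasing rearrangement of $\Xi(\pi)$ and any $w$-image of a sequence has partial sums that are termwise $\ge$ those of its non-decreasing rearrangement (the non-decreasing order minimizes every partial sum over all permutations), the domination $w_l(\Xi(\pi)) \preceq \om$ follows for every exponent $\om$.

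\textbf{Main obstacle.} I expect the bookkeeping in the last step to be the crux: carefully tracking how Bernstein's Geometric Lemma permutes and merges the blocks coming from different segments, and verifying that the ``averaging over blocks of a coarser Levi'' operation interacts correctly with the partial-sum (dominance) order — in particular that averaging can only move a sequence \emph{up} in the $\preceq$ order relative to the fully decreasing rearrangement, and that concatenating blockwise-dominated sequences and re-sorting preserves dominance. None of this is deep, but getting the direction of every inequality right (compounded by the order-reversing $w_l$ and the minus sign in $-\Xi(\pi)$ elsewhere in the section) is where the argument could most easily go wrong; the clean way to organize it is to prove once and for all the statement ``the non-decreasing rearrangement of a multiset is $\preceq$-minimal among all its rearrangements'' and ``if $u \preceq v$ then any common refinement of the block structures, averaged, preserves $\preceq$'', and then apply these mechanically.
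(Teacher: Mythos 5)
Your architecture is the natural one (the paper itself does not reprove this: it reduces to the standard module and quotes the stronger statement from \cite[Lem.~6.2.1]{DEP}), and the reduction via exactness of the Jacquet functor and Bernstein's Geometric Lemma is fine. But two things go wrong. In the single-segment step, the Jacquet module of the essentially square-integrable $L(\langle a,b\rangle_\rho)$ along $\GL_{N_0}^{\times(b-a+1)}$ is $|\cdot|^b\rho\otimes\cdots\otimes|\cdot|^a\rho$, i.e.\ the exponent is the \emph{decreasing} tuple $(b,\dotsc,a)$ — this is forced by Casselman's square-integrability criterion, which needs the partial sums to be nonnegative; the increasing order you wrote is the exponent of the Speh representation. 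The correct single-segment statement is that every exponent of $L(m)$ is a consecutive-block average of this decreasing tuple and hence dominates the constant sequence $\Xi(m)$; your phrase ``dominates the one that lists these in decreasing order, i.e.\ $w_l$ of the non-increasing rearrangement'' identifies two opposite sequences (the decreasing rearrangement is $\preceq$-maximal among rearrangements, so nothing else dominates it).

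The more serious gap is the final step. Your chain is $\om=w(E_1\sqcup\cdots\sqcup E_r)\succeq (E_1\sqcup\cdots\sqcup E_r)^{\uparrow}\succeq \Xi(\pi)^{\uparrow}=w_l(\Xi(\pi))$, where $X^{\uparrow}$ denotes the non-decreasing rearrangement. The first inequality is true but throws away exactly what you need, and the second — your ``key combinatorial lemma'' — is false: already for $\pi=\St(2,\triv)$ on $\GL_2$ the exponent is $(1/2,-1/2)$ and $w_l(\Xi(\pi))=(0,0)$, but the sorted exponent $(-1/2,1/2)$ has first partial sum $-1/2<0$, so it does not dominate $(0,0)$. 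In general, $E_i\succeq C_i$ with $C_i$ constant does not imply $(\sqcup_i E_i)^{\uparrow}\succeq(\sqcup_i C_i)^{\uparrow}$. The fix is to use that the Weyl elements occurring in the Geometric Lemma are \emph{shuffles}, preserving the internal (decreasing) order of each $E_i$: at position $j$ the shuffle has consumed a prefix of length $k_i$ from each $E_i$ with $\sum_i k_i=j$, whence
\[
\sigma_j\lf(w(E_1\sqcup\cdots\sqcup E_r)\ri)=\sum_i\sigma_{k_i}(E_i)\ \geq\ \sum_i k_i c_i\ \geq\ \sigma_j\lf(w_l(\Xi(\pi))\ri),
\]
the last inequality because the sum of the $j$ smallest elements of the multiset $\Xi(\pi)$ is a lower bound for the sum of any $j$ of its elements. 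Sorting $\om$ before comparing, as you propose, destroys this prefix structure and the inequality with it.
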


\begin{rmk}
$w_l(\Xi(\pi)) = -\Xi(\pi)$ when $\pi$ is unitarizable. 
\end{rmk}

\begin{proof}
We write $\pi = L([\langle a_i,b_i\rangle_{\rho_i}]_i)$ in the Langlands classification. Lemma 6.2.1 in \cite{DEP} gives the stronger statement that all the exponents of
\[
\sigma = \bigtimes_i [\langle a_i,b_i\rangle_{\rho_i}]_i
\]
dominate $w_l(\Xi(\pi))$. 
\end{proof}

\subsection{Matrix Coefficient Decay}\label{sec:matrixcoefficientdecay}
%We now show how $\check M(\pi)$ determines the matrix coefficient decay of $\pi$ through computing its exponents. 
We are now ready to show that for $\pi$ unitarizable, the character $\Xi(\pi)$, and therefore the augmented Arthur-$\SL_2$, determine the matrix coefficient decay $p(\pi)$ introduced in Definition \ref{def:ppi}.

%\subsubsection{Computing the Matrix Coefficient Decay}
% First note that the function $f\chi^{-1}$ is well-defined on $G/Z_G$ when $f$ is a matrix coefficient of a representation  with central character $\chi$, letting us define:
% \begin{dfn}
% Given a smooth representation $\pi$ of a locally compact group $G$ with central character $\chi$, let
% \[
% p(\pi) := \inf\{q \geq 2 \,:\, \pi \text{ has matrix coefficients in } L^q_\chi(G)\},
% \]
% where the norm defining $L^q_\chi$ is given by an integral modulo the center:
% \[
% \|f\|_q = \lf( \int_{G/Z_G} |f \chi^{-1}|^q \, dg \ri)^{1/q}.
% \]
% \end{dfn}
%Recall the matrix coefficient decay $p(\pi)$ from Definition \ref{def:ppi}. 

\begin{prop}\label{prop:matrixdecay}
Let $\pi$ be a unitarizable irreducible representation of $\GL_N(F)$. Then
\[
\f 2{p(\pi)} =  1 - \max_{1 \leq i \leq N-1} \f{2 \sigma_i(\Xi(\pi))}{i(N-i)}.
\]
%whenever this is non-negative and $0$ otherwise.  
\end{prop}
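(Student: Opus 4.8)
The plan is to compute $p(\pi)$ via the standard criterion relating $L^q$-integrability of matrix coefficients to the exponents along the split torus. Recall (see \cite[\S4.4]{casselman1995introduction} or \cite{CHH88}) that for $G = \GL_N(F)$ with maximal split torus $A$, a $K$-finite matrix coefficient of $\pi$ is essentially a sum of terms controlled, on the dominant cone $A^+$, by the exponents of $\pi$ appearing in the Jacquet module along the minimal parabolic, weighted against the modulus character $\delta_{P_0}$. The Casselman--Wallach type estimate gives that such a matrix coefficient lies in $L^q_\omega(G)$ if and only if, for every exponent $\omega$ of $\pi$, the corresponding character $a \mapsto |a^\omega| \delta_{P_0}(a)^{1/2}$ times $\delta_{P_0}(a)^{1/q - 1/2}$ decays on $A^+$; unwinding, this becomes a system of inequalities indexed by the simple roots, or equivalently by the partial sums $\sigma_i$. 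So the first step is to write down precisely this criterion: $m \in L^q_\omega$ iff the ``leading exponent'' of $\pi$ is dominated, in the ordering $\preceq$ of Definition \ref{def: sigma_i}, by an explicit vector depending on $q$ and on $\delta_{P_0}$.

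The second step is to identify the relevant exponent. By Proposition \ref{prop: -xi is an exp} and Lemma \ref{lem: dominant exponent}, the exponent $-\Xi(\pi)$ is the $\preceq$-minimal exponent of $\pi$ (its reversal $w_l(\Xi(\pi)) = -\Xi(\pi)$ for unitarizable $\pi$ is dominated by every exponent). Since the $L^q$ condition asks for \emph{all} exponents to satisfy a domination bound from above, and all exponents sit above $-\Xi(\pi)$ (equivalently, $\Xi(\pi)$ sits above all of them when we flip signs — the condition on the ``worst'' exponent is what matters), the binding constraint is the one coming from $\Xi(\pi)$ itself. Here I need to be careful with which direction the inequality goes: $m \in L^q$ imposes an \emph{upper} bound on each exponent, and the largest exponent in the relevant ordering governs. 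One checks that for unitarizable $\pi$ the extreme exponent, when brought to the dominant chamber, is precisely $\Xi(\pi)$ arranged in non-increasing order, so the single controlling inequality involves the partial sums $\sigma_i(\Xi(\pi))$.

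The third step is the explicit arithmetic. Working on $\GL_N$ with $A = (F^\times)^N$ and $A^+$ the cone where the valuations are non-increasing, the half-sum of positive roots evaluated against the $i$-th coweight direction contributes the factor $i(N-i)/2$ (this is $\langle 2\rho, \varpi_i^\vee\rangle / 2$, matching $\f12(N^2 - \sum N_j^2)$-type computations used already, e.g., in Lemma \ref{lem:inductionbound}). The condition that the matrix coefficient times $\delta_{P_0}^{1/q}$ be bounded on $A^+$, given leading exponent $\Xi(\pi)$, becomes: for each $i = 1, \dots, N-1$,
\[
\sigma_i(\Xi(\pi)) + \f i(N-i)2\lf(\f 2q - 1\ri) \le 0,
\]
wait — more precisely, after normalizing so that $\delta_{P_0}^{1/2}$ is already absorbed, the $L^q$ condition reads $\sigma_i(\Xi(\pi)) \le \lf(1 - \f2q\ri)\f{i(N-i)}2$ for all $i$, i.e. $\f2q \le 1 - \f{2\sigma_i(\Xi(\pi))}{i(N-i)}$ for all $i$. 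Taking the infimum over $q$ for which this holds for all $i$ gives exactly $\f2{p(\pi)} = 1 - \max_i \f{2\sigma_i(\Xi(\pi))}{i(N-i)}$. The proof then concludes by invoking that $p(\pi) \ge 2$ is automatic (the max is $\ge 0$ since, e.g., $\sigma_i \ge 0$ can fail only in a way that is compensated, and for unitarizable $\pi$ one has the a priori bound $\Xi$ symmetric about $0$ forcing the max to be non-negative), so the infimum is a genuine value $\ge 2$.

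The main obstacle I anticipate is making the first two steps rigorous rather than heuristic: precisely citing or proving the equivalence ``$m \in L^q_\omega(G)$ for all $K$-finite $m$'' $\iff$ ``every exponent of $\pi$ is dominated by the $q$-dependent threshold,'' and then correctly handling the reduction to the \emph{single} controlling exponent $\Xi(\pi)$ using Proposition \ref{prop: -xi is an exp} and Lemma \ref{lem: dominant exponent}. One subtlety: Lemma \ref{lem: dominant exponent} gives a lower bound on \emph{all} exponents, which controls the \emph{decay} (upper bound on the coefficient) — so it shows $p(\pi) \le$ RHS-expression$^{-1}$-type bound — while Proposition \ref{prop: -xi is an exp} exhibits $-\Xi(\pi)$ as an actual exponent, giving the matching lower bound on $p(\pi)$ (the coefficient genuinely fails to be in smaller $L^q$). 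Assembling these two halves into the clean equality, and in particular verifying that the $\max$ over $i$ is achieved by the same partial-sum data on both sides, is where the real care is needed; the root-system bookkeeping with $i(N-i)$ and $\delta_{P_0}$ is routine once the framework is set up.
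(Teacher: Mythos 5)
Your overall route is the same as the paper's: Casselman's criterion (the $L^{p+\epsilon}$ variant of \cite[Thm 4.4.6]{casselman1995introduction}), the identification of $-\Xi(\pi)$ as the $\preceq$-minimal exponent via Lemma \ref{lem: dominant exponent} (giving $p(\pi)\le p'$) and as an actual exponent via Proposition \ref{prop: -xi is an exp} (giving $p(\pi)\ge p'$), and the evaluation $\log_q \widetilde{\chi}_{-\Xi(\pi)}\delta^{\frac12-\frac1p}(x_i)=\sigma_i(\Xi(\pi))-(\frac12-\frac1p)i(N-i)$ at the fundamental coweights. The upper-bound half is essentially complete as you describe it.

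However, there is a genuine gap in the lower-bound half, precisely at the point you flag as ``where the real care is needed'' but do not resolve. The exponent $-\Xi(\pi)$ produced by Proposition \ref{prop: -xi is an exp} is attached to a specific standard Levi $M_\Theta$, namely the one whose blocks are the maximal constant runs of $\Xi(\pi)$; it is a character of $A_\Theta$ only, and Casselman's criterion tests it only against $x\in A_\Theta^-$. The elements $x_i$ lie in $A_\Theta$ exactly when $i$ is a block boundary of that partition, and every element of $A_\Theta^-$ is (mod $A(\mathcal{O})Z$) a nonnegative combination of those boundary $x_i$'s. So to conclude that \eqref{eq: Casselman thm} fails for $p<p'$ you must know that the maximum of $2\sigma_i(\Xi(\pi))/(i(N-i))$ is attained at a block boundary; if it were attained only in the interior of a block, the exponent $-\Xi(\pi)$ could still satisfy Casselman's inequality on all of $A_\Theta^-$ for some $p<p'$, and your argument would give no contradiction. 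This is exactly the content of the paper's Lemma \ref{lem:indexofmaximum1}, whose proof is a nontrivial mediant-type comparison of the ratios $Q_{s_j+x}$ and $Q_{s_{j+1}}$; your proposal contains no substitute for it. Until you supply that combinatorial step (or an alternative construction of $-\Xi(\pi)$, suitably ordered, as an exponent along the minimal parabolic itself, which Proposition \ref{prop: -xi is an exp} does not provide), the equality is only proved as the inequality $p(\pi)\le p'$.
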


\begin{proof}
We prove this in the framework of \cite[\S 4]{casselman1995introduction} and apply a variant (which goes through mutatis mutandis) of Theorem 4.4.6 therein, where $p=2$ is replaced by $p>2$. 

We recall the bijection between standard parabolic subgroups of $\GL_N$ and subsets $\Theta \subset \Delta$ of the simple roots.  Let $A$ be the diagonal subgroup of $\GL_N$.  Given $\Theta \subset \Delta$, we let $A_\Theta \subset A$ be the common kernel of all $\alpha \in \Theta$, $M_\Theta$ be the centralizer of $A_\Theta$ in $\GL_N$, and $P_\Theta$ be the standard parabolic with Levi $M_\Theta$.  We also define
\[
A_{\Theta}^-=\{x\in A_{\Theta} : |\alpha(x)|\leq 1 \text{ for all } \alpha \in \Delta \setminus \Theta\}, 
\] 
and let $A^- = A_\emptyset^-$.  Theorem 4.4.6 of \cite{casselman1995introduction} then states that the matrix coefficients of $\pi$ belong to $L^{p+\epsilon}$ for all $\epsilon>0$ if and only if for any $\Theta \subset \Delta$ and any exponent $\chi$ associated to~$P_{\Theta}$ and viewed as a character of $A_{\Theta}$, we have 
\begin{equation} \label{eq: Casselman thm}
\left|\chi\delta^{\frac{1}{2}-\frac{1}{p}}(x)\right| \leq 1 \quad \text{for all } x\in A_\Theta^{-}.
\end{equation}
Note that this differs from Casselman's statement by $\delta^{1/2}$ since we are working with normalized parabolic induction. 

We next introduce some notation to distinguish between exponents viewed as ordered $N$-tuples and as characters.  If $E$ is an exponent of $\pi$ associated to the Levi $M_\Theta$, and viewed as an ordered $N$-tuple, we let $\chi_E$ be the corresponding character of $A_\Theta$.  (Note that $\chi_E$ is not uniquely determined by $E$, but $|\chi_E|$ is.)  We let $\widetilde{\chi}_E$ denote the unramified positive character of $A$ corresponding to the $N$-tuple $E$, which has the property that $\widetilde{\chi}_E = |\chi_E|$ on $A_\Theta$.

If $E$ and $E'$ are exponents of $\pi$, the definition of $\prec$ in Definition \ref{def: sigma_i} implies that
\[
E' \preceq E \iff \widetilde{\chi}_E(x) \leq \widetilde{\chi}_{E'}(x) \text{ for all } x \in A^-. 
\] 
We therefore conclude from Lemma \ref{lem: dominant exponent} that all exponents $E$ of $\pi$ satisfy 
\[  
\widetilde{\chi}_E(x) \leq \widetilde{\chi}_{-\Xi(\pi)}(x) \text{ for all } x \in A^-.
\] 
\medskip

Let $p'$ be defined by the condition
\begin{equation}\label{eq: pprime}
\f 2{p'} =  1 - \max_{1 \leq i \leq N-1} \f{2 \sigma_i(\Xi(\pi))}{i(N-i)}.
\end{equation}
We first show that $p(\pi) \le p'$, which means showing that \eqref{eq: Casselman thm} holds with $p = p'$.  Let $E$ be an exponent of $\pi$ associated to the Levi $M_\Theta$, and let $\chi_E$ and $\widetilde{\chi}_E$ be as above.  To show that $\left|\chi_E \delta^{\frac{1}{2}-\frac{1}{p'}}(x)\right| \leq 1$ on $A_\Theta^{-}$, it suffices to show that $\widetilde{\chi}_{-\Xi(\pi)} \delta^{\frac{1}{2}-\frac{1}{p'}}(x) \leq 1$ on $A^{-}$.  Moreover, it suffices to check this at the elements
\[
x_{i} = (\overbrace{\varpi,...,\varpi}^{i},1,...,1) \in A^-, \quad 1 \leq i \leq N-1
\] 
corresponding to the fundamental weights, as these generate the semigroup $A^- / A(\mathcal{O}) Z$. With the notation $\sigma_i$ introduced in Definition \ref{def: sigma_i}, we have 
\begin{equation}\label{eq: logchar}
\log_q \widetilde{\chi}_{-\Xi(\pi)} \delta^{\frac{1}{2}-\frac{1}{p'}}(x_i) = \sigma_i(\Xi(\pi))-\left(\frac{1}{2}-\frac1{p'} \right)i(N-i).
\end{equation}
Rearranging, the definition of $p'$ implies that the right hand side is nonpositive, as required.
\medskip

To show that $p(\pi) \ge p'$, we will show that \eqref{eq: Casselman thm} doesn't hold when $p < p'$ and $E = -\Xi(\pi)$.  If $-\Xi(\pi)$ corresponds to the Levi $M_\Theta$, it suffices to show that there is $x_i \in A_\Theta^-$ such that $| \chi_{-\Xi(\pi)} \delta^{\frac{1}{2}-\frac{1}{p'}}(x_i)| = 1$.  By \eqref{eq: logchar}, this will follow if the maximum in \eqref{eq: pprime} is realized at an $i$ such that $x_i \in A_\Theta^-$.

We do this using Lemma \ref{lem:indexofmaximum1} below.  To apply it, write $\Xi(\pi)$ (considered as an ordered list) as 
\[
(d_1^{(a_1)}, \dotsc, d_r^{(a_r)}, 0^{(b)}, -d_r^{(a_r)}, \dotsc, -d_1^{(a_1)})
\] 
with $d_i$ distinct.  If $M_\Sigma$ is the Levi corresponding to the partition $(a_1,...,a_r,b,a_r,...,a_1)$, then because $\Xi(\pi)$ is constant on the blocks of $M_\Theta$, we must have $M_\Theta \subset M_\Sigma$ and $A_\Sigma \subset A_\Theta$.  It therefore suffices to show that the maximum in \eqref{eq: pprime} is realized for $i$ such that $x_i \in A_\Sigma^-$, or equivalently (because we may assume without loss of generality that $i \le N/2$), of the form $a_1 + \ldots + a_j$ for some $j$, and this is established by Lemma \ref{lem:indexofmaximum1}.

\end{proof}

\begin{lem}\label{lem:indexofmaximum1}
Define an ordered list of length $n$ and indices $s_j$ by:
\[
\Xi = (d_1^{(a_1)}, \dotsc, d_r^{(a_r)}, 0^{(b)}, -d_r^{(a_r)}, \dotsc, -d_1^{(a_1)}), 
\qquad s_j = \sum_{i=1}^j a_i,
\]
where $d_1 > \cdots > d_r$. Then the maximum value 
\[ 
\max_{i \le N/2} \f{\sigma_i(\Xi)}{i(N-i)} 
\] 
can only be achieved at $i=s_j$ for some $j$.
\end{lem}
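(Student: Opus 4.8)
The plan is to treat $f(i):=\sigma_i(\Xi)/\bigl(i(N-i)\bigr)$ as a function on $\{1,\dots,\lfloor N/2\rfloor\}$ (here $N:=2(a_1+\cdots+a_r)+b$ is the length of $\Xi$, and we may assume $r\ge 1$, the case $\Xi\equiv 0$ being vacuous) and show directly that no index outside $\{s_1,\dots,s_r\}$ can be a maximizer. The structural input is that $i\mapsto\sigma_i(\Xi)$ is piecewise affine: on the $j$-th block $s_{j-1}<i\le s_j$ (with $s_0:=0$) one has $\sigma_i(\Xi)=c_j+d_ji$, where $c_j=\sum_{l<j}a_l(d_l-d_j)\ge 0$ with equality iff $j=1$, since $d_l>d_j$ for $l<j$; and on the central block $s_r<i\le s_r+b$ it is the constant $S:=\sum_l a_l d_l>0$. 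Note $s_r=(N-b)/2\le N/2$, so all the $s_j$ are legitimate indices in $[1,N/2]$.

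First I would dispose of an index $i_0$ lying strictly inside a block $j\in\{1,\dots,r\}$, i.e.\ $s_{j-1}<i_0<s_j$. On the real interval $(0,N)$ consider $h_j(i):=(c_j+d_ji)/\bigl(i(N-i)\bigr)$; a direct computation shows the numerator of $h_j'(i)$ is the quadratic $q_j(i)=d_ji^2+2c_ji-c_jN$, which opens upward with $q_j(0)=-c_jN\le 0$ and $q_j(N)=d_jN^2+c_jN>0$. Hence $q_j$ has a single root in $(0,N)$ and $h_j$ is strictly decreasing and then strictly increasing on $(0,N)$; in particular its restriction to $[s_{j-1},s_j]$ attains its maximum only at the endpoints, so $f(i_0)=h_j(i_0)<\max\bigl(f(s_{j-1}),f(s_j)\bigr)$ (for $j=1$ one has $c_1=0$, $h_1$ is strictly increasing on $(0,N)$, and $f(i_0)<f(s_1)$ directly). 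Since $s_{j-1}$ (when $j\ge 2$) and $s_j$ are indices in $[1,N/2]$, this shows $i_0$ is not a maximizer. Next, for an index $i_0$ in the central block with $i_0\le N/2$ one has $\sigma_{i_0}(\Xi)=S$, and since $i\mapsto i(N-i)$ is strictly increasing on $\{1,\dots,\lfloor N/2\rfloor\}$ while $S>0$, the map $i\mapsto S/\bigl(i(N-i)\bigr)$ is strictly decreasing for $s_r\le i\le N/2$; thus $f(i_0)<f(s_r)$ and again $i_0$ is not a maximizer. Since $\{1,\dots,\lfloor N/2\rfloor\}$ is the disjoint union of $\{s_1,\dots,s_r\}$, the block interiors, and the portion of the central block below $N/2$, these two cases together force every maximizer into $\{s_1,\dots,s_r\}$.

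I do not expect a genuine obstacle here: the only point needing care is the quasi-convexity of each $h_j$ on its block, which is exactly the elementary sign analysis of $q_j$ above, together with honest bookkeeping of the degenerate situations ($j=1$, where $c_1=0$; and $b=0$, where the central block is empty and $N/2=s_r$). One should also record the harmless observation that the endpoints $s_j$ used in the comparisons always lie in the admissible range $[1,N/2]$, which is immediate from $1\le s_1<\cdots<s_r\le N/2$.
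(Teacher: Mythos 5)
Your proof is correct, and it takes a genuinely different route from the paper's. The paper argues discretely: it writes $i(N-i)=\sigma_i(\rho)$ for $\rho=(N-1,N-3,\dotsc,-N+1)$, supposes a maximizer of the form $s_j+x$ with $0<x<a_{j+1}$, and compares $Q_{s_j+x}$ with $Q_{s_j}$ and $Q_{s_{j+1}}$ via a mediant-type monotonicity of $\alpha\mapsto(A+\alpha d_{j+1})/(B+\alpha y)$ together with the inequality $y>z>0$ between block averages of $\rho$, obtaining a contradiction. You instead observe that $\sigma_i(\Xi)$ is affine on each block, so $f$ restricted to a block is $h_j(i)=(c_j+d_ji)/(i(N-i))$, and the sign analysis of the quadratic $q_j(i)=d_ji^2+2c_ji-c_jN$ shows $h_j$ is quasi-convex on $(0,N)$ (strictly decreasing then strictly increasing), whence block-interior indices are strictly dominated by a block endpoint; the central zero block is handled by monotonicity of $i(N-i)$ on $[1,N/2]$. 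Your continuous quasi-convexity argument is arguably more transparent and does not require singling out the maximizer, while the paper's stays entirely with integer partial sums and reuses the $\sigma_i(\rho)$ bookkeeping already present in the surrounding proofs. Two harmless caveats, which the paper's proof shares implicitly: the argument presumes $d_j>0$ and $a_j>0$ (true in the intended application, where the $d_j$ are the distinct positive values of $\Xi(\pi)$), and the wholly degenerate case $\Xi\equiv 0$ must be excluded, as you note. Your parenthetical "equality iff $j=1$" for $c_j$ uses $a_l>0$, but nothing downstream depends on it.
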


\begin{proof}
Let $\rho = (N-1, N-3, \dotsc, -N+3, -N + 1)$.  Any maximizing index must be at most $s_r$, and if it is not equal to $s_r$ we may write it as $s_j + x$ where $j < r$ is as large as possible and $0 \leq x < a_{j+1}$. Assume the statement is false so that $x \neq 0$. If $y$ and $z$ are the averages of $\rho_i$ over $s_j + 1 \le i \le s_j + x$ and $s_j + 1 \le i \le s_{j+1}$ respectively, then we have $y > z > 0$ since $\rho$ is decreasing and $s_{j+1} \le N/2$, and
\[
Q_{s_j + x} = \f{\sigma_{s_j}(\Xi) + x d_{j+1}}{\sigma_{s_j}(\rho) + xy}, \quad Q_{s_{j+1}} = \f{\sigma_{s_j}(\Xi) + a_{j+1} d_{j+1}}{\sigma_{s_j}(\rho) + a_{j+1} z}.
\]
Since
\[
\f{\sigma_{s_j}(\Xi) + x d_{j+1}}{\sigma_{s_j}(\rho) + xy} = Q_{s_j + x} \geq Q_{s_j} = \f{\sigma_{s_j}(\Xi)}{\sigma_{s_j}(\rho)}, 
\]
we know that 
\[
\alpha \mapsto \f{\sigma_{s_j}(\Xi) + \alpha d_{j+1}}{\sigma_{s_j}(\rho) + \alpha y}
\]
is non-decreasing in $\alpha > 0$. Combined with $y > z$, this gives
\[
Q_{s_{j+1}} = \f{\sigma_{s_j}(\Xi) + a_{j+1} d_{j+1}}{\sigma_{s_j}(\rho) + a_{j+1} z} > \f{\sigma_{s_j}(\Xi) + a_{j+1} d_{j+1}}{\sigma_{s_j}(\rho) + a_{j+1} y} \geq \f{\sigma_{s_j}(\Xi) + x d_{j+1}}{\sigma_{s_j}(\rho) + x y} = Q_{s_j + x}
\]
which is a contradiction. 
\end{proof}

We have the following simplification of Proposition \ref{prop:matrixdecay} for Arthur-type representations:

\begin{cor}\label{cor:mcdecaysimple}
Let $\pi$ be an Arthur-type irreducible representation of $\GL_N(F)$ such that 
\[
A(\pi) = (d_1^{(a_1)}, \dotsc, d_r^{(a_r)})
\]
with $d_1 > \cdots > d_r \geq 1$ and $a_i > 0$. Then
\[
\f2{p(\pi)} = 1 - \f{d_1 - 1}{N - a_1}. 
\]
\end{cor}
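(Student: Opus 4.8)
The plan is to evaluate the extremal quantity in Proposition~\ref{prop:matrixdecay} using the explicit description of $\Xi(\pi)$ from Lemma~\ref{lem:charactercomputation}. Write $A(\pi)=(d_1^{(a_1)},\dotsc,d_r^{(a_r)})$ with $d_1>\dotsb>d_r$. By Lemma~\ref{lem:charactercomputation}, $\Xi(\pi)$ is the union, over the parts of $A(\pi)$ counted with multiplicity, of the symmetric progressions $[\tfrac{d-1}2,\tfrac{d-3}2,\dotsc,\tfrac{-d+1}2]$; hence it is symmetric about $0$, its largest value is $\tfrac{d_1-1}2$, and that value occurs with multiplicity exactly $a_1$ (only a part equal to $d_1$ can produce it). Listing the positive distinct values of $\Xi(\pi)$ as $e_1>e_2>\dotsb>e_m>0$ with multiplicities $c_1,\dotsc,c_m$, we thus have $e_1=\tfrac{d_1-1}2$, $c_1=a_1$, and $\sigma_{a_1}(\Xi(\pi))=a_1\tfrac{d_1-1}2$. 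So the index $i=a_1$ gives
\[
\frac{2\sigma_{a_1}(\Xi(\pi))}{a_1(N-a_1)}=\frac{d_1-1}{N-a_1},
\]
which yields $\tfrac2{p(\pi)}\le 1-\tfrac{d_1-1}{N-a_1}$; it remains to prove the matching bound, i.e.\ that $\sigma_i(\Xi(\pi))\le\lambda\, i(N-i)$ for all $i$, where $\lambda:=\tfrac{d_1-1}{2(N-a_1)}$.

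Let $\rho=(N-1,N-3,\dotsc,-N+1)$, so that $\sigma_i(\rho)=i(N-i)$ and $\rho_k=N+1-2k>0$ for $k\le N/2$; the desired inequality reads $\sigma_i(\Xi(\pi))\le\lambda\,\sigma_i(\rho)$. Since $\Xi(\pi)$ and $\rho$ are both symmetric about $0$, their partial sums satisfy $\sigma_i=\sigma_{N-i}$, so it suffices to treat $i\le N/2$. On each maximal interval on which $\Xi(\pi)$ is constant (so that $i\mapsto\sigma_i(\Xi(\pi))$ is affine), the function $\sigma_i(\Xi(\pi))-\lambda\,\sigma_i(\rho)$ is affine minus concave, hence convex, hence maximized at the endpoints; therefore it is enough to verify $\sigma_i(\Xi(\pi))\le\lambda\,\sigma_i(\rho)$ at the breakpoint indices $s_j:=c_1+\dotsb+c_j$ for $1\le j\le m$ (note $s_1=a_1$). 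Alternatively one may quote Lemma~\ref{lem:indexofmaximum1}, which says exactly that $\max_{i\le N/2}\sigma_i(\Xi(\pi))/(i(N-i))$ is attained at one of the $s_j$.

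At the breakpoint $s_j$ the top $s_j$ entries of $\Xi(\pi)$ are precisely those $\ge e_j$; since each length-$d$ progression above is itself sorted and the sum of its top $t$ entries is $\tfrac{t(d-t)}2$ for $0\le t\le d$, one gets $\sigma_{s_j}(\Xi(\pi))=\tfrac12\sum_d t_d(d-t_d)$, where $t_d$ is the number of entries of the part-$d$ progression lying $\ge e_j$ and $\sum_d t_d=s_j$. Substituting this converts the inequalities $\sigma_{s_j}(\Xi(\pi))\le\lambda\,s_j(N-s_j)$ into elementary relations among the $d_i$ and $a_i$. The route I would take is to show that $Q_j:=\sigma_{s_j}(\Xi(\pi))/\bigl(s_j(N-s_j)\bigr)$ is non-increasing in $j$, so that $\max_j Q_j=Q_1=\lambda$. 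Writing $R_j:=\sum_{s_j<k\le s_{j+1}}\rho_k=c_{j+1}(N-s_j-s_{j+1})>0$, the mediant inequality reduces $Q_{j+1}\le Q_j$ to
\[
e_{j+1}\,s_j(N-s_j)\;\le\;(N-s_j-s_{j+1})\,\sigma_{s_j}(\Xi(\pi)).
\]
For $j=1$ this becomes (after substituting $\sigma_{s_1}(\Xi(\pi))=a_1 e_1$) a short inequality in $d_1,a_1$ and the multiplicity of the part $d_1-2$ or $d_1-1$, which one checks by splitting into the cases $e_2=e_1-1$ and $e_2=e_1-\tfrac12$ (the latter occurs exactly when $d_1-1$ is a part of $A(\pi)$) and using only $\sum_i a_i d_i=N$; the general $j$ is treated the same way, identifying $e_{j+1}$, $c_{j+1}$, $s_j$ and $\sigma_{s_j}(\Xi(\pi))$ in terms of the parts of $A(\pi)$ that are $\ge 2e_{j+1}+1$.

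The main obstacle is this last combinatorial step. The inequality $\sigma_{s_j}(\Xi(\pi))\le\lambda\,s_j(N-s_j)$ is an equality for infinitely many partitions — for instance whenever $A(\pi)=(d_1,(d_1-1)^{(a)})$ — so it is not accessible through crude estimates such as $t_d(d-t_d)\le d^2/4$ or $\sum_d t_d(d-t_d)\le(\sum_d t_d)(\sum_d(d-t_d))$ (the latter only yields the trivial bound $\tfrac2{p(\pi)}\ge 0$); one genuinely has to keep track of the integrality of the $t_d$, equivalently of the precise combinatorial relationship between the distinct values $e_j$ of $\Xi(\pi)$ and the parts of $A(\pi)$.
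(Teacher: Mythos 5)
Your setup is correct and matches the paper's: you combine Proposition~\ref{prop:matrixdecay} with Lemma~\ref{lem:charactercomputation}, correctly identify that $\tfrac{d_1-1}{2}$ occurs in $\Xi(\pi)$ with multiplicity exactly $a_1$ so that $i=a_1$ realizes the value $\tfrac{d_1-1}{N-a_1}$, and correctly reduce the matching upper bound to the breakpoint indices $s_j$ (either your convexity argument or Lemma~\ref{lem:indexofmaximum1} does this). But the entire content of the corollary beyond that reduction is the inequality $\sigma_{s_j}(\Xi(\pi))\le \lambda\, s_j(N-s_j)$ for every $j$, and you have not proved it. This is exactly the paper's Lemma~\ref{lem:indexofmaximum2}, which is established there by a nontrivial induction on $d_1$: one strips off the top value $(d_1-1)/2$, passes to the partition $A'$ obtained by replacing $d_1^{(a_1)}$ with $(d_1-2)^{(a_1)}$, applies the induction hypothesis to $\Xi'$, and closes with a mediant inequality plus the case analysis $m_2>0$ versus $m_2=0$. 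Your proposal replaces this by the assertion that $Q_j:=\sigma_{s_j}(\Xi)/(s_j(N-s_j))$ is non-increasing in $j$, reduces each step $Q_{j+1}\le Q_j$ to $e_{j+1}s_j(N-s_j)\le (N-s_j-s_{j+1})\sigma_{s_j}(\Xi)$, checks (in outline) only $j=1$, and declares that ``the general $j$ is treated the same way.'' It is not: for general $j$ the right-hand side involves the full partial sum $\sigma_{s_j}(\Xi)$, which depends on all parts of $A(\pi)$ of size $\ge 2e_j+1$ together with their parities, and as you yourself note the inequality is an equality for infinitely many partitions, so there is no slack to absorb a sloppy estimate. You have flagged this as ``the main obstacle,'' and indeed it is the theorem.

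Two further comments. First, your monotonicity claim is strictly stronger than what the paper proves (the paper only shows $\max_j Q_j=Q_1$), so you cannot quote Lemma~\ref{lem:indexofmaximum2} to close the gap; you would genuinely need a new argument. Second, the claim does appear to be true, and there is a cleaner route to it than the one you sketch: rather than comparing $Q_{j+1}$ with the average $Q_j$, show that the marginal ratios $M_j:=e_j/(N-s_{j-1}-s_j)$ (increment of $\sigma(\Xi)$ over increment of $i(N-i)$ across the $j$-th block) are themselves non-increasing; since each $Q_j$ is a weighted average of $M_1,\dotsc,M_j$, this gives $Q_{j+1}\le Q_j$. The inequality $M_j\ge M_{j+1}$ unwinds to $\delta_j(N-s_{j-1}-s_j)\ge e_j(c_j+c_{j+1})$ with $\delta_j=e_j-e_{j+1}\in\{\tfrac12,1\}$, and this can be verified part by part of $A(\pi)$, i.e.\ one checks $\delta_j\bigl(d-t_d^{(j-1)}-t_d^{(j)}\bigr)\ge e_j\bigl(t_d^{(j+1)}-t_d^{(j-1)}\bigr)$ for each part $d$, where $t_d^{(l)}$ counts the entries of the part-$d$ progression that are $\ge e_l$; a short case analysis on whether the progression meets $e_{j-1},e_j,e_{j+1}$ does it. If you carry out that verification, you obtain a proof genuinely different from (and arguably more transparent than) the paper's induction on $d_1$. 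As submitted, however, the combinatorial core is missing.
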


This follows from Proposition \ref{prop:matrixdecay}, and the following combinatorial lemma:

\begin{lem}\label{lem:indexofmaximum2}

Let $A$ be a sequence of the form $(d_1^{(a_1)}, \dotsc, d_r^{(a_r)})$ with $d_1 > \cdots > d_r \geq 1$ and $a_i > 0$.  Let $\Xi$ be the sequence defined as the non-increasing rearrangement of
\[
\bigsqcup_{i=1}^r \lf[ \left( \f{d_i-1}2 \right)^{(a_i)}, \left( \f{d_i-3}2 \right)^{(a_i)}, \dotsc, \left( \f{1-d_i}2 \right)^{(a_i)} \ri],
\]
and let $N = \sum_{i=1}^r d_i a_i$ be the length of $\Xi$.  Then
\[
\max_i \f{2\sigma_i(\Xi)}{i(N-i)} = \frac{d_1 - 1}{N - a_1}.
\]

\end{lem}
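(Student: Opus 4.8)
The plan is to pin down the index at which the maximum is attained. First I would sort $\Xi$ non-increasingly as $\xi_1\ge\cdots\ge\xi_N$ and record two structural facts read off directly from the description of $\Xi$ as a union of arithmetic progressions: its top entry is $\xi_1=\cdots=\xi_{a_1}=\tfrac{d_1-1}{2}$ with multiplicity exactly $a_1$ (because $d_1$ is a strict maximum of the $d_i$), and $\Xi$ is antisymmetric, $\xi_j=-\xi_{N+1-j}$. Antisymmetry gives $\sigma_i(\Xi)=\sigma_{N-i}(\Xi)$, so $Q_i:=\sigma_i(\Xi)/(i(N-i))$ is symmetric under $i\leftrightarrow N-i$ and it suffices to treat $i\le N/2$. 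The case $d_1=1$ is trivial, so assume $d_1\ge2$. Since $\sigma_{a_1}(\Xi)=a_1\tfrac{d_1-1}{2}$ we get $Q_{a_1}=\tfrac{d_1-1}{2(N-a_1)}$, which realizes the claimed value; and for $i\le a_1$ one has $\sigma_i(\Xi)=i\tfrac{d_1-1}{2}$, hence $Q_i=\tfrac{d_1-1}{2(N-i)}\le Q_{a_1}$. So the whole problem reduces to showing $Q_i\le Q_{a_1}$ for $a_1\le i\le N/2$.

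Next I would invoke Lemma \ref{lem:indexofmaximum1}: sorted, $\Xi$ has precisely the symmetric ``flat-block'' shape to which that lemma applies, so $\max_{i\le N/2}Q_i$ is attained at one of the partial-sum indices $s_j:=f_1+\cdots+f_j$, where $e_1>\cdots>e_m$ are the distinct values of $\Xi$ with multiplicities $f_k$ (so $s_1=a_1$ and $e_1=\tfrac{d_1-1}{2}$). It then suffices to prove $Q_{s_j}\le Q_{s_1}$ for all $j$ with $s_j\le N/2$, and for this I would run a telescoping/mediant argument. Set $D_i=i(N-i)$ and, for $j\ge0$ with $s_{j+1}\le N/2$, let $\mu_j:=(\sigma_{s_{j+1}}(\Xi)-\sigma_{s_j}(\Xi))/(D_{s_{j+1}}-D_{s_j})$; the denominator is a sum of the positive numbers $N+1-2i$, and a one-line computation gives $D_{s_{j+1}}-D_{s_j}=f_{j+1}(N-s_j-s_{j+1})$, so $\mu_j=e_{j+1}/(N-s_j-s_{j+1})$. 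Since $Q_{s_{j+1}}$ is the mediant of $Q_{s_j}$ and $\mu_j$ it lies between them, so $Q_{s_{j+1}}\le\max(Q_{s_j},\mu_j)$; and because $s_0=0$ forces $\mu_0=Q_{s_1}=\tfrac{d_1-1}{2(N-a_1)}$, an induction on $j$ reduces everything to the single inequality $\mu_j\le\mu_0$ for each $j\ge1$ with $s_{j+1}\le N/2$.

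Finally I would prove $\mu_j\le\mu_0$. Writing $\delta:=e_1-e_{j+1}$, clearing denominators turns $\mu_j\le\mu_0$ into $(d_1-1)(s_j+s_{j+1}-a_1)\le2\delta(N-a_1)$, and a short count using antisymmetry gives $s_j+s_{j+1}-a_1=a_1+f_{j+1}+2g$ with $g:=\#\{l:e_{j+1}<\xi_l<e_1\}$. Now the arithmetic-progression description shows that the entries of $\Xi$ in $[e_{j+1},e_1]$ come exactly from the parts $d_i$ with $d_1-d_i\le2\delta$, and from such a part the relevant entries are the top $1+\lfloor\delta-\tfrac{d_1-d_i}{2}\rfloor$ of its progression, repeated $a_i$ times. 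Bookkeeping this — the key point being that, regardless of the parity of $t_i:=\delta-\tfrac{d_1-d_i}{2}$, part $d_i$ (for $i\ne1$) contributes exactly $a_i(1+2t_i)$ to $a_1+f_{j+1}+2g$ while part $d_1$ contributes exactly $2\delta\,a_1$ — gives $a_1+f_{j+1}+2g=2\delta\,a_1+\sum_{i\ne1,\ d_1-d_i\le2\delta}a_i(1+2t_i)$. For $i\ne1$ one has $d_i\le d_1-1$ and $2\delta\le d_1-1$ (the latter because $e_{j+1}\ge0$ for a corner with $s_{j+1}\le N/2$, hence $\delta\le e_1$), so the algebraic identity $(d_1-1)(1+2t_i)=2\delta d_i+(d_1-1-d_i)(2\delta-(d_1-1))$ forces $(d_1-1)(1+2t_i)\le2\delta d_i$; summing and using $N=\sum_ia_id_i$ yields $(d_1-1)(a_1+f_{j+1}+2g)\le2\delta(N-a_1)$. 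This gives $Q_i\le\tfrac{d_1-1}{2(N-a_1)}$ for all $i$, and equality at $i=a_1$ gives the reverse inequality, proving the stated identity.

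I expect the main obstacle to be the bookkeeping in the last paragraph: checking that the contribution of each progression to $a_1+f_{j+1}+2g$ really is given by the clean parity-independent formula. The two parities of $t_i$ correspond to whether or not the value $e_{j+1}$ itself occurs in the $i$-th progression, which alters $f_{j+1}$ and $g$ individually, and one must verify these alterations compensate. Everything else — the antisymmetry reductions, the appeal to Lemma \ref{lem:indexofmaximum1}, the mediant telescoping, and the final one-variable inequality — is routine.
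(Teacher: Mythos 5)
Your proof is correct, but it takes a genuinely different route from the one in the paper. The paper argues by induction on $d_1$: it strips off the outermost layer of $\Xi$ (the $m_1$ copies of $\pm(d_1-1)/2$), passes to the sequence $\Xi'$ built from $A$ with $d_1^{(a_1)}$ replaced by $(d_1-2)^{(a_1)}$, applies the induction hypothesis to $\Xi'$, and closes the gap with the mediant inequality \eqref{eq: dream function addition} plus two short case checks (according to whether $m_2>0$). You instead work directly at each corner index: after the same reductions via antisymmetry and Lemma \ref{lem:indexofmaximum1}, you telescope $Q_{s_{j+1}}$ as a mediant of $Q_{s_j}$ and the incremental slope $\mu_j=e_{j+1}/(N-s_j-s_{j+1})$, reduce everything to the single inequality $\mu_j\le\mu_0$, and verify that by an explicit count of the entries of $\Xi$ lying in $[e_{j+1},e_1]$. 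I checked the delicate points: the identity $D_{s_{j+1}}-D_{s_j}=f_{j+1}(N-s_j-s_{j+1})$, the fact that $e_{j+1}\ge 0$ (hence $2\delta\le d_1-1$) for corners with $s_{j+1}\le N/2$, and above all the parity-independent contribution formulas ($2\delta a_1$ from part $d_1$ and $a_i(1+2t_i)$ from part $d_i$, $i\ne 1$) --- in each parity case the shifts in $f_{j+1}$ and $g$ do compensate exactly as you claim, and the final one-variable inequality $(d_1-1)(1+2t_i)\le 2\delta d_i$ follows from your factorization since $d_i\le d_1-1$ and $2\delta\le d_1-1$. What each approach buys: the paper's induction keeps every individual step short at the cost of carrying an induction over configurations; your argument is non-inductive and isolates the combinatorial content in one closed-form counting identity, at the cost of the bookkeeping you flagged. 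Either is acceptable.
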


\begin{proof}

It is equivalent to show that the desired maximum occurs at $i = a_1$.  We prove this by induction on $d_1$.  The case $d_1 = 1$ is trivial, and $d_1 = 2$ follows immediately from Lemma \ref{lem:indexofmaximum1}.  We may therefore assume that $d_1 \ge 3$.

We write $2\Xi$ as
\[
2\Xi = ((d_1 - 1)^{(m_{1})}, (d_1 - 2)^{(m_{2})}, \dotsc, (1 - d_1)^{(m_{2d_1 - 1} )} ), 
\]
possibly with some $m_j = 0$, and let $s_k = \sum_{j= 1}^k m_j$.  We define $A'$ to be the sequence obtained from $A$ by replacing $d_1^{(a_1)}$ with $(d_1-2)^{(a_1)}$ and taking the non-increasing rearrangement, and let $\Xi'$ be obtained from $A'$ in the same way as $\Xi$ is from $A$.  It follows that
\[
2\Xi' = ((d_1 - 2)^{(m_{2})}, \dotsc, (2 - d_1)^{(m_{2d_1 - 2} )} ).
\]
We let $N' = N - 2m_1$ be the length of $\Xi'$, and define $s'_k = \sum_{j= 2}^{k+1} m_j$ for $k \ge 1$, so that $s'_k$ are associated to $\Xi'$ in the same way as $s_k$ are to $\Xi$.

By Lemma \ref{lem:indexofmaximum1}, the maximum is realized at $i = s_k$ for some $1 \le k \le d_1 - 1$, and we may assume without loss of generality that $s_k > m_1$.  We wish to show that
\[
\f{2\sigma_{s_k}(\Xi)}{s_k(N-s_k)} \le \frac{d_1 - 1}{N - a_1}.
\]
Breaking $\sigma_{s_k}(\Xi)$ into its first $m_1$ and last $s_k - m_1$ terms gives
\begin{align*}
\f{2\sigma_{s_k}(\Xi)}{s_k(N-s_k)} & = \frac{ (d_1-1)m_1 + \sum_{j=2}^k (d_1 - j) m_j }{  (N - m_1)m_1 + (N - s_k - m_1)(s_k - m_1) } \\
& = \frac{ (d_1-1)m_1 + 2\sigma_{s'_{k-1}}(\Xi') }{  (N - m_1)m_1 + (N' - s'_{k-1} )s'_{k-1} }.
\end{align*}
By the inequality
\begin{equation} \label{eq: dream function addition}
a,c \geq 0, \quad b,d > 0, \quad \f ab \geq \f cd \implies \f ab \geq \f{a + c}{b+d} \geq \f cd,
\end{equation}
it suffices to show that
\[
\frac{ 2\sigma_{s'_{k-1}}(\Xi')}{ (N' - s'_{k-1} )s'_{k-1} } \le \frac{d_1 - 1}{N - a_1}.
\]

If $m_2 > 0$, then the first term of $\Xi'$ is $d_1 - 2$, and our induction hypothesis gives that
\[
\frac{ 2\sigma_{s'_{k-1}}(\Xi')}{ (N' - s'_{k-1} )s'_{k-1} } \le \frac{d_1 - 2}{N' - m_2} = \frac{d_1 - 2}{N - 2m_1 - m_2}.
\]
We must therefore show that
\[
\frac{d_1 - 2}{N - 2m_1 - m_2} \le \frac{d_1 - 1}{N - a_1}.
\]
This simplifies to
\[
N \ge m_1 d_1 + m_2(d_1 - 1) = a_1 d_1 + a_2 d_2,
\]
which follows from the definition of $N$.

On the other hand, if $m_2 = 0$ we must have $m_3 > 0$, so in this case our induction hypothesis gives
\[
\frac{ 2\sigma_{s'_{k-1}}(\Xi')}{ (N' - s'_{k-1} )s'_{k-1} } \le \frac{d_1 - 3}{N' - m_3} = \frac{d_1 - 2}{N - 2m_1 - m_3}.
\]
It therefore suffices to show that
\[
\frac{d_1 - 2}{N - 2m_1 - m_3} \le \frac{d_1 - 1}{N - a_1},
\]
and this simplifies to $2N \ge m_1(d_1+1) + m_3(d_1 - 1)$.  This again follows from the definition of $N$, and the observation that if $m_3 > m_1$ then $m_3 = m_1 + a_2$ and $d_2 = d_1 - 2$.
\end{proof}

% For convenience of notation, we write $A_k = \lceil N/2 \rceil -s_{k-1} = \sum_{j=0}^km_j$. [EQUIVALENT STATEMENT]. We have [CHECK $s_k$ indexing]
% \[ 
% \sum_{i=0}^{A_k}2i = A_k(A_k-1) \quad \Rightarrow \quad \sum_{i=s_{k+1}}^{s_k} 2i = \sum_{i=A_{k-1}}^{A_k} 2i = m_k(A_k+A_{k-1}-1).
% \]
% We then expand
% \[
% \frac{2\sigma_{s_k}(\Xi(\pi))}{s_k(N-s_k)} =
% \frac{\sum_{j=k}^{d_1-1} jm_j}{\sum_{j=k}^{d_1-1}\sum_{i=s_{j+1}}^{s_j} 2i} =   \frac{\sum_{j=k}^{d_1-1} jm_j}{\sum_{j=k}^{d_1-1}m_j(A_j+A_{j-1}-1)}
% \]
% %Next write
% %\[
% %\rho := (p_1, \dotsc, p_N) := (N-1, N-3, \dotsc)
% %\]
% %and
% %\[
% %S_k = p_{s_0} + p_{s_0 - 1} + \cdots + p_{s_{k+1}} 
% %\]
% %so that
% %\begin{multline*}
% %S_k - S_{k+1} \red{?}= (N - 2s_k + 1) + (N - 2s_{k-1} - 1) + \cdots + (N - 2s_{k+1} + 1) \\ = m_{k-1}(N - 2s_{k-1} + 1 - x_{k-1}).
% %\end{multline*}

% %Then, for any $k$
% %\[
% %\f{2\sigma_{s_k}(\Xi(\pi))}{s_k(N-s_k)} = \f{m_{d_1- 1}(d_1 -1) +   m_{d_1- 2}(d_1 -2) +\cdots + m_{d_1 - k} (d_1 - k) }{S_{d_1 - 1} + (S_{d_1 - 2} - S_{d_1 - 1}) + \cdots + (S_{d_1 - k} - S_{d_1 - k+1}) }.
% %\]
% By the inequality, 
% \[
% a,c \geq 0, \quad b,d > 0, \quad \f ab \geq \f cd \implies \f ab \geq \f{a + c}{b+d} \geq \f cd
% \]
% It therefore suffices to show that for all $d_1 - 1 \geq l \geq 1$
% \[
% \f{j+1}{A_{j+1}+A_j-1} \geq \f{j}{A_{j}+A_{j-1}-1}. 
% \]

%MGG restart here

\subsection{Uncertainty Principle}

In Corollary \ref{cor:ASL2toGKdim} and Section \ref{sec:matrixcoefficientdecay} respectively, we established that the GK-dimension and matrix coefficient decay of unitarizable $\pi$ are both computable in terms of the augmented Arthur-$\SL_2$. This lets us compare the two invariants by some elementary inequality work. 

\begin{dfn}
   
Choose $N$ and let $d_{\max} = \frac12N(N-1)$. Define a normalized GK-parameter
\[
g(\pi) := 1 - d_{GK}(\pi)/d_{\max}
\]
for irreducible representations $\pi$ of $\GL_N(F)$. 
\end{dfn}

The parameter $g(\pi)$ ranges between $0$ when $\pi$ is generic and $1$ when $\pi$ is a character. 

\begin{thm}[``Uncertainty  Principle'']\label{thm:atypemctogk}
For all Arthur-type irreducible representations $\pi$ of $\GL_N(F)$:
\[
g(\pi) \leq 1 - \f2{p(\pi)} \leq g(\pi)^{1/2}. 
\]

\end{thm}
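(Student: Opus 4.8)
The plan is to turn both sides of the claimed inequality into explicit functions of the partition $A(\pi)=(d_1^{(a_1)},\dots,d_r^{(a_r)})$ attached to $\pi$, with $d_1>\cdots>d_r\ge 1$ and $a_j>0$, and then verify the resulting numerical inequalities. Since Arthur-type representations are unitarizable, Corollary~\ref{cor:ASL2toGKdim} gives $d_{GK}(\pi)=\tfrac12\bigl(N^2-\sum_j a_j d_j^2\bigr)$, and combined with $\sum_j a_j d_j=N$ this yields
\[
g(\pi)=1-\frac{d_{GK}(\pi)}{d_{\max}}=\frac{N(N-1)-N^2+\sum_j a_j d_j^2}{N(N-1)}=\frac{\sum_j a_j d_j(d_j-1)}{N(N-1)}=:\frac{S}{N(N-1)}.
\]
On the other side, Corollary~\ref{cor:mcdecaysimple} gives $1-\tfrac{2}{p(\pi)}=\dfrac{d_1-1}{N-a_1}$. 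If $d_1=1$ then $A(\pi)=(1^{(N)})$, $\pi$ is tempered, and $g(\pi)=0=1-\tfrac{2}{p(\pi)}$, so the statement is trivial; I therefore assume $d_1\ge 2$, which forces $N\ge a_1 d_1>a_1$ and in particular $N-a_1\ge 1$.

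For the left inequality $g(\pi)\le 1-\tfrac{2}{p(\pi)}$, I would estimate $S=\sum_j a_j d_j(d_j-1)\le (d_1-1)\sum_j a_j d_j=(d_1-1)N$ using $d_j\le d_1$, so that
\[
g(\pi)=\frac{S}{N(N-1)}\le\frac{d_1-1}{N-1}\le\frac{d_1-1}{N-a_1}=1-\frac{2}{p(\pi)},
\]
the last step because $a_1\ge 1$. This is the easy half.

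For the right inequality $1-\tfrac{2}{p(\pi)}\le g(\pi)^{1/2}$, both sides being nonnegative it is equivalent to $\dfrac{(d_1-1)^2}{(N-a_1)^2}\le\dfrac{S}{N(N-1)}$. Discarding the nonnegative terms with $j\ge 2$ gives $S\ge a_1 d_1(d_1-1)$, so it suffices to prove the partition-free inequality $(d_1-1)N(N-1)\le a_1 d_1(N-a_1)^2$. Putting $a=a_1$, $b=d_1-1\ge 1$, $P=N-a_1$, and using $N\ge a_1 d_1$ so that $P\ge ab$, this becomes $f(P)\ge 0$ for all $P\ge ab$, where
\[
f(P):=a(b+1)P^2-b(P+a)(P+a-1)=\bigl(b(a-1)+a\bigr)P^2-b(2a-1)P-ab(a-1).
\]
Since $b(a-1)+a>0$, $f$ is an upward parabola, and one computes $f(ab)=ab\,(a-1)(ab-1)(b+1)\ge 0$ and $f'(ab)=b\bigl(2a(a-1)(b+1)+1\bigr)>0$; the latter shows $f$ is increasing on $[ab,\infty)$, so $f(P)\ge f(ab)\ge 0$ there, completing the proof.

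The whole argument is elementary once the two closed formulas for $g(\pi)$ and $p(\pi)$ are in hand, and the only genuine obstacle is the last step; its content is concentrated in the factorization $f(ab)=ab(a-1)(ab-1)(b+1)$, which makes nonnegativity transparent. I would also sanity-check the boundary cases $a_1=1$ (where $f(ab)=0$, so the squared bound is sharp, e.g. at characters) and $r=1$ (where $P=ab$, so $f$ attains its minimum over the admissible range), and confirm that these are exactly the configurations where $1-\tfrac{2}{p(\pi)}=g(\pi)^{1/2}$.
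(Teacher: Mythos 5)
Your proof is correct and follows essentially the same route as the paper: express $g(\pi)$ and $1-\tfrac{2}{p(\pi)}$ via Corollaries \ref{cor:ASL2toGKdim} and \ref{cor:mcdecaysimple}, bound $\sum_j a_j d_j(d_j-1)$ above by $(d_1-1)N$ for the left inequality and below by $a_1 d_1(d_1-1)$ for the right, reducing to the inequality $(d_1-1)N(N-1)\le a_1 d_1 (N-a_1)^2$ under $N\ge a_1 d_1$. The only difference is that you verify this last inequality in full (the factorization $f(ab)=ab(a-1)(ab-1)(b+1)$ checks out), whereas the paper simply asserts it.
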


\begin{proof}
%The formulas in Proposition \ref{prop:matrixdecay} and Corollary \ref{cor:ASL2toGKdim} reduce the claim to an elementary olympiad-style inequality that can be proven very quickly by a reasoning LLM \red{nope} \footnote{For example, Chat GPT o4-mini-high generated a proof in 7 minutes total time writing prompts and waiting for responses; these are very effective labor-saving tools for filling in these types of arguments (though checking correctness takes longer) \red{nope}}. 

Let $A(\pi) = [d_1, \dotsc, d_r]$ with $d_1$ the largest entry. By Corollary \ref{cor:ASL2toGKdim} and some algebra,
\[
g(\pi) = \f1{N(N-1)} \sum_{i=1}^r d_i(d_i - 1). 
\]

For the lower bound, Proposition \ref{prop:matrixdecay} gives
\[
1 - \f2{p(\pi)} \geq \f{d_1-1}{N-1},
\]
since $(d_1 - 1)/2$ is the largest element of $\Xi(\pi)$. On the other hand,
\[
g(\pi) = \sum_{i=1}^r \f{d_i}N \f{d_i - 1}{N-1} \leq \sum_{i=1}^r \f{d_i}N \f{d_1 - 1}{N-1} = \f{d_1 - 1}{N-1}.
\]
%which finishes the bound.

For the upper bound, assume there are $m$ many $d_1$'s in $A(\pi)$. Then
\[
g(\pi) \geq \f{md_1(d_1 - 1)}{N(N-1)}.
\]
Corollary \ref{cor:mcdecaysimple} gives that
\[
1 - \f2{p(\pi)} = \f{d_1 - 1}{N-m},
\]
and the inequality 
\[
\lf(\f{d_1 - 1}{N-m} \ri)^2 \leq \f{md_1(d_1 - 1)}{N(N-1)}
\]
always holds under the condition $m \leq N/d_1$. 
\end{proof}

Figure \ref{fig:boundtightness} plots $2d_{\max}/p(\pi)$ against $d_{GK}(\pi)$ for all possible $A(\pi)$ when $N = 50$ together with the bounds from Theorem \ref{thm:atypemctogk}. As can be inspected, both bounds (particularly the square-root one) are quite tight.  

\begin{figure}
    \centering
    \includegraphics[width=0.8\linewidth]{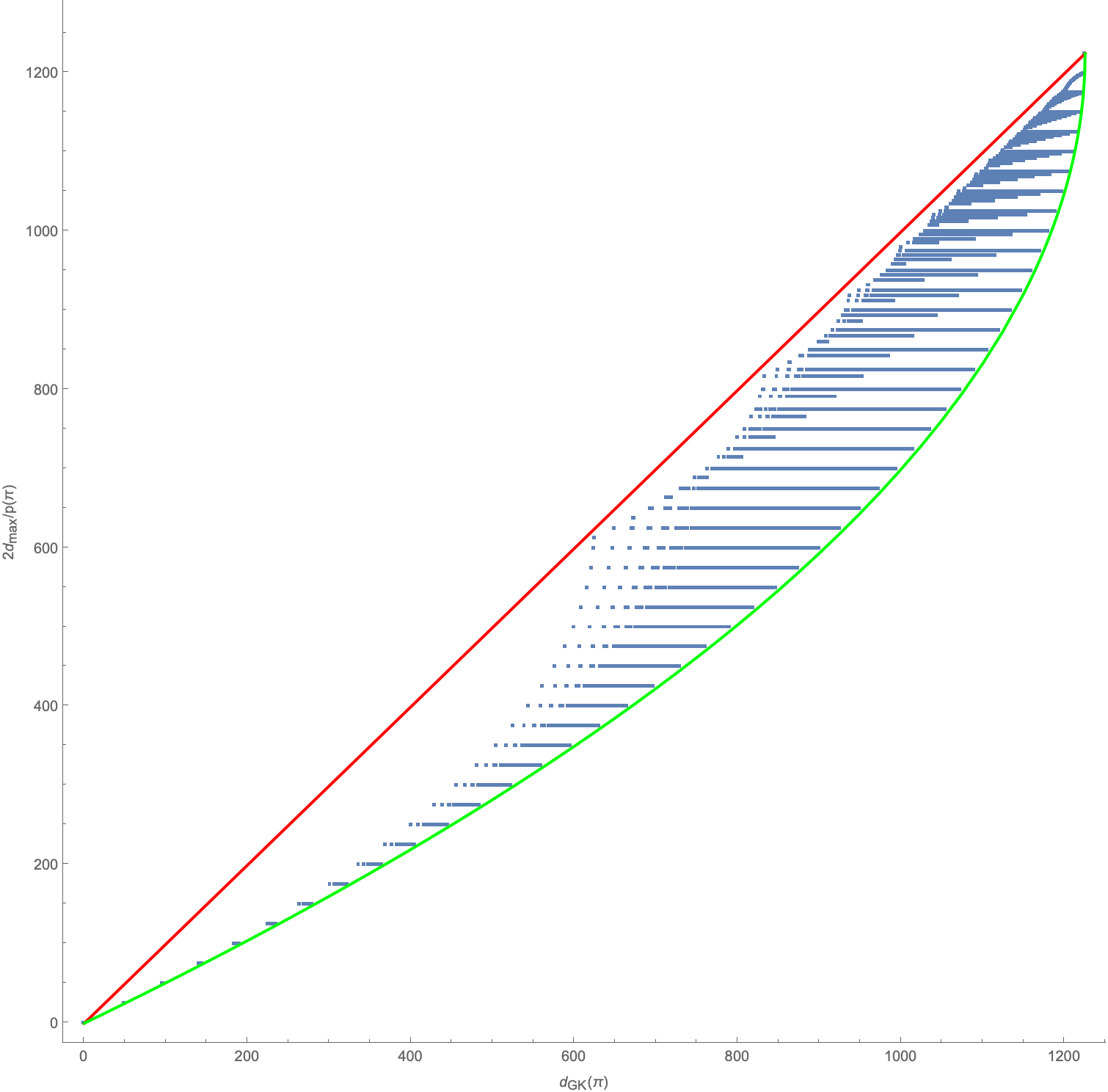}
    \caption{Tightness of Bounds in Theorem \ref{thm:atypemctogk} with $N = 50$.}
    \label{fig:boundtightness}
\end{figure}

Unitarizable representations satisfy a slightly weaker upper bound:

\begin{thm}\label{thm:unitarymctogk}
For all unitarizable irreducible representations $\pi$ of $\GL_N(F)$,
\[
g(\pi) \leq 1 - \f2{p(\pi)} \leq g(\pi)^{1/2} + \f2N. 
\]
%\red{can this be improved?} check all d_i = 1, all x_i = 1/2
\end{thm}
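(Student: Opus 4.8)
The plan is to reduce to the Arthur-type case (Theorem \ref{thm:atypemctogk}) by comparing a general unitarizable $\pi$ with the Arthur-type representation obtained from it by setting all twisting exponents to zero. Writing $\pi = \bigoplus_{i=1}^r |\cdot|^{x_i}\rho_i[a_i][d_i]$ as in Theorem \ref{thm:unitarizable} (so $|x_i| < 1/2$, with the non-Arthur blocks occurring in pairs $|\cdot|^{\pm y}$), I set $\pi^0 := \bigoplus_{i=1}^r \rho_i[a_i][d_i]$, which is Arthur-type. Since $A(\pi^0) = A(\pi)$, Corollary \ref{cor:ASL2toGKdim} gives $g(\pi^0) = g(\pi)$. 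Two structural facts about $\Xi(\pi)$ enter repeatedly: by Lemma \ref{lem:charactercomputation}, $\Xi(\pi)$ is obtained from $\Xi(\pi^0)$ by shifting, block by block, each entry by the corresponding $x_i$; and because the complementary-series constituents come in pairs $|\cdot|^{\pm y}$, the multiset $\Xi(\pi)$ is symmetric, i.e. $\Xi(\pi) = -\Xi(\pi)$.

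For the lower bound $g(\pi) \le 1 - 2/p(\pi)$ I would rerun the argument of Theorem \ref{thm:atypemctogk}. Let $d_{\max}$ be the largest part of $A(\pi)$; among the blocks with $d_i = d_{\max}$ there is one with $x_i \ge 0$ (an Arthur constituent has $x_i = 0$, and a complementary pair contains a block with $x_i = y > 0$), so the largest entry of $\Xi(\pi)$ is $\ge (d_{\max}-1)/2$. Taking $i = 1$ in Proposition \ref{prop:matrixdecay} gives $1 - 2/p(\pi) \ge (d_{\max}-1)/(N-1)$, while writing $g(\pi) = \sum_l \tfrac{f_l}{N}\cdot\tfrac{f_l-1}{N-1}$ as a convex combination over the parts $f_l$ of $A(\pi)$ shows $g(\pi) \le (d_{\max}-1)/(N-1)$.

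For the upper bound, the key estimate is $\sigma_i(\Xi(\pi)) \le \sigma_i(\Xi(\pi^0)) + i/2$ for every $i$: the block-preserving bijection $\Xi(\pi)\to\Xi(\pi^0)$ moves each entry by less than $1/2$, so the sum of the $i$ largest entries of $\Xi(\pi)$ exceeds the sum of their images by at most $i/2$, and that image is a set of $i$ entries of $\Xi(\pi^0)$, hence has sum at most $\sigma_i(\Xi(\pi^0))$. By the symmetry of $\Xi(\pi)$, the function $i\mapsto 2\sigma_i(\Xi(\pi))/(i(N-i))$ is invariant under $i\leftrightarrow N-i$, so the maximum in Proposition \ref{prop:matrixdecay} is attained at some $i_0 \le \lfloor N/2\rfloor$, for which $1/(N-i_0) \le 2/N$. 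Combining,
\[
1 - \frac2{p(\pi)} = \frac{2\sigma_{i_0}(\Xi(\pi))}{i_0(N-i_0)} \le \frac{2\sigma_{i_0}(\Xi(\pi^0))}{i_0(N-i_0)} + \frac1{N-i_0} \le \Bigl(1 - \frac2{p(\pi^0)}\Bigr) + \frac2N,
\]
and Theorem \ref{thm:atypemctogk} applied to $\pi^0$ bounds the first term by $g(\pi^0)^{1/2} = g(\pi)^{1/2}$, which yields $1 - 2/p(\pi) \le g(\pi)^{1/2} + 2/N$.

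There is no genuinely hard step once the Arthur-type theorem is available; the only points needing care are the two structural facts about $\Xi(\pi)$ (its symmetry, and the block-wise comparison with $\Xi(\pi^0)$) and the bookkeeping that produces the precise constant $2/N$ — namely, the perturbation $|x_i| < 1/2$ costs $i/2$ in each partial sum $\sigma_i$, which after normalization by $i(N-i)/2$ and the reduction to $i_0 \le N/2$ is exactly $2/N$. This also makes transparent why the extra term is absent in the Arthur-type case, where $\pi = \pi^0$.
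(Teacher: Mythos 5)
Your proof is correct and follows essentially the same route as the paper: reduce to the Arthur-type representation obtained by zeroing the twists, note $g$ is unchanged, and for the upper bound use $\sigma_i(\Xi(\pi)) \le \sigma_i(\Xi(\pi^0)) + i/2$ together with the symmetry of $\Xi(\pi)$ to restrict the maximum to $i \le N/2$, which yields exactly the $2/N$ correction. The only (harmless) variation is in the lower bound, where you take $i=1$ and compare directly with $(d_{\max}-1)/(N-1)$ rather than passing through $1 - 2/p(\pi^0) = (d_1-1)/(N-a_1)$ as the paper does.
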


\begin{proof}
By Theorem \ref{thm:unitarizable}, a unitarizable representation is of the form
\[
\pi  = \bigoplus_{i=1}^{r_1}\rho_i[a_i][d_i] \oplus \bigoplus_{i=r_1+1}^{r_2} ( |\cdot|^{y_i} \rho_i[a_i][d_i] \oplus |\cdot|^{-y_i} \rho_i[a_i][d_i])
\]
for $0 < y_i < 1/2$. Define the corresponding Arthur-type representation
\[
\pi^- := \bigoplus_{i=1}^{r_1}\rho_i[a_i][d_i] \oplus \bigoplus_{i=r_1 + 1}^{r_2} (\rho_i[a_i][d_i] \oplus \rho_i[a_i][d_i]).
\]
Corollary \ref{cor:ASL2toGKdim} gives $g(\pi) = g(\pi^-)$. 

For the lower bound, the multiset of elements in $\Xi(\pi)$ is symmetric around $0$. Therefore, Proposition \ref{prop:matrixdecay} can be reduced to 
\[
1 - \f2{p(\pi)} = \max_{1 \leq i \leq N/2} \f{2 \sigma_i(\Xi(\pi))}{i(N-i)}  \geq \f{d_1 - 1}{N - a_1} \geq 1- \f2{p(\pi^-)}. 
\]
%so that in particular,
%\[
%1 - \f2{p(\pi^-)} \leq 1 - \f2{p(\pi)}.
%\]
For the upper bound,
\[
1 - \f2{p(\pi)} < \max_{1 \leq i \leq N/2} \f{2 \sigma_i(\Xi(\pi^-)) + i}{i(N-i)} \leq 1 - \f2{p(\pi^-)} + \f2N
\]
into which we substitute the upper bound in Theorem \ref{thm:atypemctogk}. 
\end{proof}

\begin{rmk}
The ``worst case'' in the upper bound of Theorem \ref{thm:unitarymctogk} is when all $d_i = 1$ and the $x_i$ approach the boundary $x_i = \pm 1/2$. At this boundary $g(\pi) = 0$ and $1 - 2/p(\pi) = 2/N$.
\end{rmk} 

Combining the results of this section with Theorem \ref{thm:mainfixedvector} and Corollary \ref{cor:maincoefficientsalt} respectively gives bounds on fixed-vector growth and HCH-coefficients in terms of matrix coefficient decay:

\begin{cor}
For all $\eps > 0$, there is a uniform constant $C_\eps := C_{\eps, N, F}$ such that for any smooth irreducible representation $\pi$ of $\GL_N(F)$ with $p(\pi) \geq p_0$:
\[
\dim(\pi^{K_\ell}) \leq C_\eps q^{\ell \left(N(N-1) \lf(1 -  \lf( 1 - \f 2{p_0}\ri)^2 \ri)+\eps \right)}
\]
if $\pi$ is Arthur-type and
\[
\dim(\pi^{K_\ell}) \leq C_\eps q^{\ell \left( N(N-1) \lf(1 -  \lf( 1 - \f 2{p_0} - \f2N\ri)^2 \ri)+\eps\right) }
\]
if $\pi$ is only unitarizable. 
\end{cor}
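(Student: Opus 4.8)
The plan is to combine the uniform fixed-vector bound of Theorem~\ref{thm:mainfixedvector} with the comparison between $d_{GK}(\pi)$ and $p(\pi)$ established in Theorems~\ref{thm:atypemctogk} and~\ref{thm:unitarymctogk}. The idea is simply that the hypothesis $p(\pi)\ge p_0$ (i.e., $\pi$ far from tempered) forces $d_{GK}(\pi)$ to be small, and the growth rate of $\dim(\pi^{K_\ell})$ is controlled, up to $\eps$ in the exponent, by $d_{GK}(\pi)$.

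First I would unwind the normalization. By definition $g(\pi)=1-\f{2d_{GK}(\pi)}{N(N-1)}$, so $d_{GK}(\pi)=\f{N(N-1)}{2}\bigl(1-g(\pi)\bigr)$ and an upper bound on $d_{GK}(\pi)$ is the same thing as a lower bound on $g(\pi)$. We may assume $p_0\ge 2$, since $p(\pi)\ge 2$ always; then $p(\pi)\ge p_0$ gives $1-\f2{p(\pi)}\ge 1-\f2{p_0}\ge 0$. If $\pi$ is of Arthur type, the right-hand inequality of Theorem~\ref{thm:atypemctogk} reads $1-\f2{p(\pi)}\le g(\pi)^{1/2}$; since both sides are nonnegative we may square, obtaining $g(\pi)\ge\bigl(1-\f2{p_0}\bigr)^2$ and hence
\[
d_{GK}(\pi)\le \f{N(N-1)}{2}\Bigl(1-\bigl(1-\tfrac2{p_0}\bigr)^2\Bigr).
\]
Substituting this into Theorem~\ref{thm:mainfixedvector} yields the first inequality (in fact with $\tfrac12 N(N-1)$ in place of $N(N-1)$, which only strengthens it since $1-(1-2/p_0)^2\ge 0$). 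For the second inequality the only change is to invoke Theorem~\ref{thm:unitarymctogk} instead, which gives $1-\f2{p(\pi)}\le g(\pi)^{1/2}+\f2N$, so $g(\pi)^{1/2}\ge 1-\f2{p_0}-\f2N$. When the right-hand side is nonnegative, squaring gives the same estimate on $d_{GK}(\pi)$ with $\f2{p_0}$ replaced by $\f2{p_0}+\f2N$; when it is negative the claimed bound is weaker than the trivial one coming from $d_{GK}(\pi)\le\f12 N(N-1)$, so there is nothing to prove.

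Finally, as in the proof of Corollary~\ref{cor:maincoefficientsalt}, one enlarges $C_\eps$ to absorb the finitely many small values of $\ell$ below whatever threshold Theorem~\ref{thm:mainfixedvector} requires, which makes the bound uniform in both $\pi$ and $\ell$. All of the substantive input already lives in Theorems~\ref{thm:mainfixedvector}, \ref{thm:atypemctogk}, and~\ref{thm:unitarymctogk}, so I expect no real obstacle; the only points requiring care are purely bookkeeping---verifying that $1-2/p(\pi)\ge 0$ so that squaring is legitimate, and isolating the boundary regime $1-2/p_0-2/N<0$ in the unitarizable statement. Running the same argument with Corollary~\ref{cor:maincoefficientsalt} in place of Theorem~\ref{thm:mainfixedvector} gives the companion bound on the Harish-Chandra--Howe coefficients $c_O(\pi)$ in terms of $p(\pi)$.
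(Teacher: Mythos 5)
Your proposal is correct and is exactly the argument the paper intends: the corollary is stated in the text only as ``combining the results of this section with Theorem \ref{thm:mainfixedvector}'', i.e.\ square the upper bound $1-2/p(\pi)\le g(\pi)^{1/2}$ (resp.\ $g(\pi)^{1/2}+2/N$) to get a lower bound on $g(\pi)$, convert it to an upper bound on $d_{GK}(\pi)$ via $d_{GK}(\pi)=\tfrac12 N(N-1)(1-g(\pi))$, and feed that into Theorem \ref{thm:mainfixedvector}. Your observation that this actually produces the exponent $\tfrac12 N(N-1)\bigl(1-(1-\tfrac{2}{p_0})^2\bigr)$, which is stronger than the stated $N(N-1)\bigl(1-(1-\tfrac{2}{p_0})^2\bigr)$, is also right. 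One small caveat on your boundary analysis in the unitarizable case: when $1-\tfrac{2}{p_0}-\tfrac{2}{N}<0$ the claimed exponent is weaker than the trivial one only when $\bigl(1-\tfrac{2}{p_0}-\tfrac{2}{N}\bigr)^2\le \tfrac12$, which holds for $N\ge 3$ (since then the quantity being squared lies in $[-2/N,0)$) but not for $N=2$, $p_0=2$, where the literal statement degenerates; this is an artifact of how the corollary is phrased rather than a defect of your argument.
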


\begin{cor}
For all $\eps > 0$, there is a uniform constant $C_\eps := C_{\eps, N, F}$ such that the Harish-Chandra--Howe coefficients of any smooth irreducible representation $\pi$ of $\GL_N(F)$ with $p(\pi) \geq p_0$ 
 are bounded by
\[
|c_O(\pi)| \leq C_\eps q^{\ell(\pi) \lf(N(N-1)\lf(1 -  \lf( 1 - \f 2{p_0}\ri)^2\ri) - \f12 \dim O + \eps \ri) } 
\]
if $\pi$ is Arthur-type and
\[
|c_O(\pi)| \leq C_\eps q^{\ell(\pi) \lf(N(N-1)\lf(1 -  \lf( 1 - \f 2{p_0} - \f2N\ri)^2\ri) - \f12 \dim O + \eps \ri) } 
\]
if $\pi$ is only unitarizable.
\end{cor}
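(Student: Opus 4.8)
The plan is to derive this corollary directly from Corollary~\ref{cor:maincoefficientsalt} and the uncertainty inequalities of Theorems~\ref{thm:atypemctogk} and~\ref{thm:unitarymctogk}: the hypothesis $p(\pi)\ge p_0$ is converted, using the right-hand inequality in those theorems, into an explicit upper bound on $d_{GK}(\pi)$, which is then substituted into the coefficient bound already established. We may assume $p_0\ge 2$, since $p(\pi)\ge 2$ for every $\pi$.

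By Corollary~\ref{cor:maincoefficientsalt}, for each $\eps>0$ there is a uniform $C_\eps$ with $|c_O(\pi)|\le C_\eps\,q^{\ell(\pi)(d_{GK}(\pi)-\frac12\dim O+\eps)}$ for all smooth irreducible $\pi$ and all $O\in\mc N(N)$; since $q^{x}$ is increasing in $x$, it suffices to bound $d_{GK}(\pi)$ from above. If $\pi$ is Arthur-type, Theorem~\ref{thm:atypemctogk} gives $0\le 1-\tfrac2{p(\pi)}\le g(\pi)^{1/2}$, and as $p(\pi)\ge p_0\ge 2$ we have $1-\tfrac2{p(\pi)}\ge 1-\tfrac2{p_0}\ge 0$, so squaring yields $g(\pi)\ge(1-\tfrac2{p_0})^2$. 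Since $g(\pi)=1-d_{GK}(\pi)/d_{\max}$ with $d_{\max}=\tfrac12N(N-1)$, this rearranges to $d_{GK}(\pi)\le\tfrac12N(N-1)\bigl(1-(1-\tfrac2{p_0})^2\bigr)\le N(N-1)\bigl(1-(1-\tfrac2{p_0})^2\bigr)$, and substituting into the bound on $|c_O(\pi)|$ gives the Arthur-type estimate. For $\pi$ merely unitarizable one runs the identical argument with Theorem~\ref{thm:unitarymctogk} ($1-\tfrac2{p(\pi)}\le g(\pi)^{1/2}+\tfrac2N$): if $1-\tfrac2{p_0}-\tfrac2N\ge 0$ the squaring step produces $d_{GK}(\pi)\le\tfrac12N(N-1)\bigl(1-(1-\tfrac2{p_0}-\tfrac2N)^2\bigr)$, while if $1-\tfrac2{p_0}-\tfrac2N<0$ only $g(\pi)\ge 0$ survives, i.e.\ $d_{GK}(\pi)\le\tfrac12N(N-1)$, and one checks that this is still at most $N(N-1)\bigl(1-(1-\tfrac2{p_0}-\tfrac2N)^2\bigr)$ (which amounts to $(1-\tfrac2{p_0}-\tfrac2N)^2\le\tfrac12$, clear once $N\ge 3$ from $\tfrac2{p_0}\le 1$ and $\tfrac2N\le\tfrac23$). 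In either case $d_{GK}(\pi)\le N(N-1)\bigl(1-(1-\tfrac2{p_0}-\tfrac2N)^2\bigr)$, and plugging this into Corollary~\ref{cor:maincoefficientsalt}, then enlarging $C_\eps$ to cover the finitely many representations of bounded level exactly as in the proof of that corollary, completes the argument.

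The whole proof is bookkeeping layered on top of two results already in hand, so I do not expect a genuine obstacle; the only point needing a moment's attention is the sign distinction in the unitarizable case, which is precisely the reason the coefficient in the statement is the non-sharp $N(N-1)$ rather than the $\tfrac12N(N-1)$ produced by the Arthur-type computation. The same scheme, with Theorem~\ref{thm:mainfixedvector} in place of Corollary~\ref{cor:maincoefficientsalt}, also yields the preceding corollary on fixed-vector growth.
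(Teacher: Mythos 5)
Your argument is the one the paper intends: the corollary is stated there without proof as an immediate combination of Corollary \ref{cor:maincoefficientsalt} with the upper bounds of Theorems \ref{thm:atypemctogk} and \ref{thm:unitarymctogk}, and your conversion of $p(\pi)\ge p_0$ into $g(\pi)\ge(1-\tfrac2{p_0})^2$ (resp.\ the shifted version) and then into an upper bound on $d_{GK}(\pi)$ is exactly that combination. You are also right that the derivation actually yields the sharper constant $\tfrac12N(N-1)$, so the stated $N(N-1)$ is a weakening in the Arthur-type case.

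One loose end you half-flag but do not close: in the unitarizable case, when $1-\tfrac2{p_0}-\tfrac2N<0$ your fallback requires $(1-\tfrac2{p_0}-\tfrac2N)^2\le\tfrac12$, which you verify only for $N\ge3$. For $N=2$ this can fail (e.g.\ $p_0=2$ gives $(1-1-1)^2=1$), and in fact the statement as literally written is then problematic: a complementary series representation of $\GL_2(F)$ is unitarizable, generic (so $c_{O_{\mathrm{reg}}}(\pi)=1$ and $d_{GK}(\pi)=1$), satisfies $p(\pi)>2$, and can have arbitrarily large level, while the claimed exponent degenerates to $-\tfrac12\dim O+\eps$. So either one restricts to $N\ge3$, or one should read the unitarizable bound with the understanding that the squared term is truncated at $0$ when $1-\tfrac2{p_0}-\tfrac2N<0$ (equivalently, replace it by the trivial bound $d_{GK}(\pi)\le\tfrac12N(N-1)$ there). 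This is an artifact of the statement's non-sharp constant rather than a flaw in your method, but it deserves a sentence.
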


\bibliographystyle{amsalpha}
\bibliography{Tbib}

\end{document}